\documentclass[12pt]{article}
\usepackage[]{times}
\usepackage{amsmath}
\usepackage{paralist}
\usepackage{fullpage}
\usepackage{amssymb}
\usepackage{mathrsfs}
\usepackage{bm}
\usepackage{graphicx}
\usepackage{graphicx}
\usepackage{wrapfig}
\usepackage{epsfig}
\usepackage{url}
\usepackage{fullpage}
\usepackage{psfrag}
\usepackage{verbatim}
\usepackage{amsthm}
\usepackage[]{times}
\usepackage{amsmath}
\usepackage{paralist}
\usepackage{fullpage}
\usepackage{amssymb}
\usepackage{amsrefs}

\usepackage[colorlinks=true, pdfstartview=FitV, linkcolor=blue,
            citecolor=blue, urlcolor=blue]{hyperref}
\usepackage[usenames]{color}
\definecolor{Red}{rgb}{0.7,0,0.1}
\definecolor{Green}{rgb}{0,0.6,0}

\newcommand{\nboldsymbol}[1]{\boldsymbol{#1}}


\def\hu{\hat{\mathbf{u}}}
\def\hU{\hat{\mathbf{U}}}
\def\exi{\mathbf{e}_{\nboldsymbol{\xi}}}

\newcommand{\rmd}{\mathrm{d}}           
\newcommand{\rme}{\mathrm{e}}           


\usepackage{mathrsfs}
\usepackage{bm}
\usepackage{graphicx}
\usepackage{graphicx}
\usepackage{wrapfig}
\usepackage{epsfig}
\usepackage{url}
\usepackage{fullpage}
\usepackage{psfrag}
\usepackage{verbatim}


\newcommand{\R}{\mathbb{R}}

\renewcommand{\geq}{\geqslant}
\renewcommand{\leq}{\leqslant}

\newcommand{\zetas}{\tilde{\zeta}}

\numberwithin{equation}{section}
\newtheorem{thm}{Theorem}[section]
\newtheorem{lemma}{Lemma}[section]
\newtheorem{cor}{Corollary}[section]
\newtheorem{prop}{Proposition}[section]

\newtheorem{remark}{Remark}[section]
\newtheorem{definition}{Definition}[section]

\setlength{\textwidth}{6.5in}
\addtolength{\topmargin}{-0.0in}
\setlength{\oddsidemargin}{0.0in}
\setlength{\textheight}{9in}

\DeclareMathOperator{\Li}{Li}

\title{Symmetry Breaking and Uniqueness for the Incompressible Navier-Stokes Equations}
\author{Radu Dascaliuc \thanks{Department of Mathematics,  Oregon State University, Corvallis, OR, 97331.}
\and Nicholas Michalowski \thanks{Department of Mathematics,  New
    Mexico State University, Las Cruces, NM,  88003.}
\and Enrique Thomann\thanks{Department of Mathematics,  Oregon State University, Corvallis, OR, 97331. }
\and
Edward C. Waymire\thanks{Department of Mathematics,  Oregon State University, Corvallis, OR, 97331. Corresponding author. \tt {waymire@math.oregonstate.edu}.}
}

\date{}

\begin{document}

\maketitle

\begin{abstract}
The present article establishes connections between the structure
of the deterministic Navier-Stokes equations and the structure of (similarity) equations that
govern self-similar solutions as
expected values of certain naturally associated
stochastic cascades.  
A principle result is that  explosion criteria
for the stochastic cascades involved in the probabilistic 
representations of solutions to the respective equations coincide. 
  While the uniqueness problem itself remains unresolved, these
connections provide interesting problems and 
possible methods for investigating
symmetry breaking and the uniqueness problem for Navier-Stokes equations.
In particular, new branching Markov chains, including a {\it dilogarithmic branching random walk} on the multiplicative group $(0,\infty)$, naturally
arise as a result of this 
investigation.
\end{abstract}

{\bf 
The role of scaling in the question of uniqueness of mild solutions to 3D Navier-Stokes equations is the central theme of this investigation. 
In particular, we describe a framework, where the uniqueness for both scale-invariant and general problems is re-cast in terms of a non-explosion property of associated stochastic cascades. Thus, if the explosion event of the self-similar cascade is probabilistically different from the explosion event for the general, non-symmetric cascade in appropriate settings, then we would have a manifestation of symmetry breaking in the Navier-Stokes uniqueness problem -- the scaling-invariant case being qualitatively different.  While we are only able to prove partial results related to the associated explosion problems, the main conclusion of this paper is that the self-similar (scaling-invariant) explosion and the general, non-symmetric explosion (in appropriate functional settings) are the same, suggesting that scaling symmetry may be directly involved in the eventual solution of the outstanding Navier-Stokes well-posedness problem. We note that the idea of employing scaling-invariant solutions in the context of well-posedness goes back to Leray \cite{JL1934}, while the idea to use stochastic cascades to prove existence of mild solutions is due to Le Jan and Sznitman  \cite{YLAS1997}.
 }

\section{Introduction}

\subsection{Navier-Stokes equations and their scaling properties.}

The physics of unrestricted three-dimensional incompressible fluid flow is mathematically encoded 
in the corresponding set of Navier-Stokes equations (NSE) governing the time 
evolution of velocity (momentum)
 ${\mathbf u}$ and pressure $p$ in three dimensional Euclidean space.
Letting 
${\mathbf u}({\mathbf x},t)$ denote the velocity of an incompressible fluid at the position ${\mathbf x}\in \mathbb{R}^3$ and
and time $t\ge 0$, essentially Newton's law of motion may
be cast as
\begin{equation}
\label{NSeqn}
\frac{\partial {\mathbf u}}{\partial t}   +   {\mathbf u}\cdot\nabla{\mathbf u} =   \nu\Delta{\mathbf u} -\nabla p + \mathbf{g},
\quad \nabla\cdot {\mathbf u} = 0, \  \mathbf{u}({\mathbf x},0^+) = \mathbf{u}_0({\mathbf x}), \ {\mathbf x}\in\mathbb{R}^3,\  t > 0, 
\end{equation}
where $\nu > 0$ is a positive (viscosity) parameter,  $\nabla = ({\partial/\partial x_j})_{1\le j\le 3},$
$\Delta = \nabla\cdot\nabla$ is the (vector) {\it Laplacian} operator, and $\mathbf{g}$ is an (external
forcing) function with values in ${\mathbb R}^3$.  More generally (\ref{NSeqn}) may be posed on a  
domain in ${\mathbb R}^3$ with boundary.  However for convenience in this paper we will consider the 
free-space model without boundary or an external force. 

The term ${\partial {\mathbf u}/\partial t}   +   {\mathbf u}\cdot\nabla{\mathbf u}$ represents the acceleration of a
fluid parcel within a Lagrangian reference frame.  In particular,
the non-linearity ${\mathbf u}\cdot\nabla{\mathbf u}$ is intrinsic to this
 description of the flow and cannot be eliminated.  The 
viscous force $\nu\Delta\mathbf{u}$ is the result of a linearization
of stress-strain forces between fluid parcels composing the
fluid, and the {\it divergence-free}
condition $\nabla\cdot\mathbf{u} =0$ provides 
conservation of mass; also referred to as {\it incompressibility}.  The pressure 
gradient term $\nabla p$ is a fourth unknown in the set of 
four equations describing the $n=3$ coordinates  of velocity 
$\mathbf{u} = (u_1,u_2,u_3)$ and
the scalar pressure $p$. We refer to \cite{PL2002} and \cite{RT2001} for more background on the physical derivation of the Navier-Stokes equations.   

The unique determination of 
${\mathbf u}$ from the given viscosity parameter $\nu > 0$, external forcing $\mathbf{g}$ (in our case $\mathbf{g}=\mathbf{0}$), and initial data $\mathbf{u}_0$ 
is an obvious question for 
both the physics and the mathematics of fluid flow.
After more than one-hundred years of research
it remains unknown whether smooth initial data ${\mathbf u}_0$ leads to the existence of 
unique smooth (regular) solutions, valid for all time.  
It is believed that mathematical progress on this issue is closely connected to understanding the physical phenomenon of {\em turbulence}.
As a consequence, the resolution of the uniqueness and regularity problem for the Navier-Stokes equations ranks
among the most important open problems of contemporary applied and theoretical mathematics.   

The current state of the regularity issue may be viewed through the prism of natural scaling (symmetry) peculiar to of the Navier-Stokes equations as follows

\begin{equation}\label{scaling}\begin{array}{c} 
\mbox{If ${\mathbf u}(\mathbf{x},t), p(\mathbf{x},t)$ is a solution to (\ref{NSeqn}), then for any
scaling parameter $r > 0$,}  \\
\mbox{$\mathbf{u}_r(\mathbf{x},t)
 =r\mathbf{u}(r\mathbf{x},r^2 t), p_r(\mathbf{x},t) = r^2 p(r\mathbf{x},r^2t)$ is also a solution}\\ \mbox{with initial data
$r{\mathbf u}_0(r{\mathbf x}). $}
\end{array}\end{equation}

The quantities of the flow (typically represented by certain norms of $\mathbf{u}$) that preserve this scaling are called {\em critical}, the ones that grow as $r\to 0$ are {\em super-critical}, and the ones that decrease are {\em sub-critical}. For example, since the pioneering work of Leray in the 1930's (see \cite{JL1934}, which still remains a benchmark for  the regularity problem), it is known that NSE possess global-in-time weak solutions that are bounded in $L^2$.  If we re-scale the $L^2$-norm of $\mathbf{u}$ according to the scaling above we obtain
\[\|\mathbf{u}_r\|^2_{2,\infty}:=\sup\limits_{t\in[0,\infty)}\|\mathbf{u_r}(t)\|_2^2=\sup\limits_{t\in[0,\infty)}\int\limits_{\mathbb{R}^3}r^2|\mathbf{u}(r\mathbf{x},r^2 t)|^2\,d\mathbf{x}=\frac{1}{r}\|\mathbf{u}\|^2_{2,\infty},\]
and so $\|\cdot\|_{2,\infty}$ is a super-critical quantity. Yet, according to Leray's result, the solution is regular as long the $L^2$ norm of the (vector) gradient remains bounded:
\[
\|\nabla\mathbf{u}_r\|^2_{2,\infty}:=\sup\limits_{t\in[0,\infty)}\|\nabla\mathbf{u}_r(t)\|_{L^2}^2=
\sup\limits_{t\in[0,\infty)} \int\limits_{\mathbb{R}^3}
r^2\sum\limits_{i,j=1}^3 (\partial_{x_i}u_j(r\mathbf{x}, r^2t))^2\,d\mathbf{x}=r\|\nabla\mathbf{u}\|^2_{2,\infty},
\]
i.e. Leray's regularity condition is sub-critical.

More modern regularity criteria still suffer similar scaling defects:  The Escauriaza, Seregin and \v{S}ver\'ak criterion involving boundedness of the $L^3$-norm (see \cite{LEetAl2003}), as well as the Koch and Tataru condition of smallness of the initial data in the BMO$^{-1}$ functional space (\cite{HKDT2001}), are each critical in nature.

This gap between what is known for the solutions of NSE (all of which are super-critical) and the sufficient conditions for regularity, is one of the manifestations of the important role scaling plays in the NSE well-posedness problem.

There is a growing consensus that functional and harmonic analysis techniques alone would not be sufficient to break the regularity problem by obtaining a super-critical condition for well-posedness. 
Specifically, there are examples on NSE-like systems that blow-up in finite time despite many functional properties characteristic to NSE (\cites{SM2001,NKNP2005,AC2008,TT2014}).

This suggests a necessity of developing new approaches to understand NSE and non-linear systems in general. In particular, our results suggest that the stochastic multiplicative cascade framework introduced by Le Jan and Sznitman (\cite{YLAS1997}) may provide new insights into the NSE regularity problem. Indeed, in this note we establish a new scaling-critical condition for uniqueness of solutions, as well as provide evidence of a connection between the issue of uniqueness and natural scaling of the NSE.

A natural way to explore the role of scaling in the theory of NSE is to consider {\em scaling-invariant} or {\em self-similar} solutions, i.e. the solutions satisfying

\begin{equation}\label{LerayScaling}{\mathbf u}_r = {\mathbf u},
\ p_r = p, \quad \forall r > 0.\end{equation}

Leray \cite{JL1934} observed that if ${\mathbf u},p$ is a self-similar solution to (\ref{NSeqn}),
then upon choosing $r\equiv r(t) = 1/\sqrt{t}$ for fixed $t > 0$, one has
$$\mathbf{u}(\mathbf{x},t) = \frac{1}{\sqrt{t}}
\mathbf{u}(\frac{\mathbf{x}}{\sqrt{t}},1) = \frac{1}{\sqrt{t}}
{\mathbf U}(\frac{\mathbf{x}}{\sqrt{t}}),$$
where
\begin{equation}
\label{Lerayeqtn}
-\Delta \mathbf{U} - \frac{1}{2} \mathbf{U} - \frac{1}{2}(X\cdot\nabla) \mathbf{U} + (\mathbf{U} \cdot \nabla) \mathbf{U} = - \nabla P,
~~~~
\nabla\cdot \mathbf{U} =  0.
\end{equation}
Leray himself had the idea to use this self-similarity (backwards in time, with $r(t)=1/\sqrt{T-t}$\,) to produce an example of blow-up in the NSE problem. This was eventually proved impossible due to the work of Tsai (\cite{TPT1998}), as well as Ne\v{c}as, R{\r{u}}{\v{z}}i{\v{c}}ka, and \v{S}ver\'ak (\cite{JNetAl1996}) (see also \cite{PL2002})  who established that backward-in-time the only self-similar solution is $\mathbf{0}$. Study of forward in time self-similar solutions, particularly of equation (\ref{Lerayeqtn}), revealed several important existence and uniqueness as well as regularity results (\cites{YGTM1989,MCFP1996,YM1997,ZG2006}). In particular, Meyer  (\cite{YM1997}) provided a framework of constructing solutions  that are unique in a weak $L^3$-space starting from ``small'' initial data. We note that the self-similar solutions must invariably posses singularity at the origin, as they are homogeneous functions of degree -1. The weak-$L^3$ space is a natural functional space for such functions. Later Gruji\'c (\cite{ZG2006}) showed that the solutions built  by Meyer are in fact smooth (outside the origin). More recently, Jia and \v{S}ver\'ak  (\cites{HJVS2014,HJVS2013}) proved existence of smooth solutions for (\ref{Lerayeqtn}), without a smallness assumption, pointing to a potential for lack of uniqueness of self-similar solutions for `large' initial data and showing a pathway of how such solutions might be used to produce blow-up in Navier-Stokes equations. Cannone and Karch (\cite{MCGK2004}) also argued for the connections between the theory of self-similar solutions with large initial data and possible emergence of singularities in the NSE.

The fact that self-similar solutions could be used to prove/disprove well-posedness for general NSE is another manifestation of the particular importance of scaling symmetries in the Navier-Stokes equations.

\subsection{The question of symmetry breaking -- the description of the main results.}

In this paper we seek to provide an approach to both self-similar, as well as general NSE problems that could shed light into the specific issue related to this natural scaling, to be referred to as {\em symmetry breaking}, namely:

\vspace{.2in}

\noindent{\em Is the uniqueness of solutions to NSE tied to the uniqueness of self-similar solutions?}

\vspace{.2in}

If the solutions to (\ref{Lerayeqtn}) are unique, yet the solutions to (\ref{NSeqn}) are not, then we have a manifestation of symmetry breaking in NSE, signaling that a possible lack of well-posedness could be the result  of a mechanism that magnifies/creates deviations from natural scaling present in the initial data.
On the other hand, if the uniqueness for (\ref{Lerayeqtn}) is closely tied to the uniqueness in (\ref{NSeqn}), then the well-posedness problem is essentially connected to the natural symmetries of Navier-Stokes equations.
Thus, the notions of scaling invariance and self-similarity, considered from the perspective of symmetry breaking, provide
the central focus of the present paper.

We consider this issue in the framework of Le Jan-Sznitman stochastic multiplicative cascades, developed in \cite{YLAS1997}, combined with the idea of majorizing kernels, introduced in \cite{RBEtAl2003}, to investigate existence and uniqueness of a mild solutions to the NSE (see (\ref{mildNSE}) below). In this framework, a multiplicative cascade process is associated to the mild formulation of NSE in Fourier space, and a solution is recovered form the initial data via an expected value of a certain recursive product along a generated tree. The space of initial data allowed is in part governed by the choice of the majorizing kernel (see Section \ref{sec2} for details).

In order to guarantee finiteness of the tree, a {\em thinning procedure} is usually employed. The thinning, which involves a chance of artificially terminating a branch, is guaranteed to generate a finite cascade,  producing a unique mild solution to NSE,  but at an expense of shrinking the smallness condition on the initial data.

In contrast to the classical Le Jan-Sznitman approach, we will not employ a thinning procedure to terminate the cascade (see Section \ref{sec2}). Elimination of thinning is a step towards accommodating wider families of initial data by  relaxing, and eventually removing the aforementioned smallness condition. However, in the absence of thinning, one has to deal with a possibility of the formation of infinite cascade trees in finite time -- the phenomenon called {\em explosion}, and our main object of study.

In particular, we will not be concerned with the issue of existence of mild solutions built with such procedure (the solution is guaranteed to exist as long as the cascade is non-exploding and the associated expected values are finite -- see Section \ref{sec2}). Also, we will not study regularity properties of such solutions (a difficult question, especially for more general spaces of initial data). Instead, our goal is to show that the {\em explosion} phenomenon in such cascades can be used as a {\em surrogate for uniqueness} for the solutions of the NSE in a certain functional class, allowing us to classify the associated uniqueness problems by the corresponding explosion time random variables.

Specifically, we use this approach to study two families of NSE initial data: one governed by the Bessel majorizing kernel (\ref{bessel_kernel}) (in which case we are able prove the non-explosion), as well as the dilogarithmic kernel (\ref{dilog_kernel}) (which allows for a much wider space of initial data, but with a more nuanced explosion problem). 

We also adapt this approach to the study of scaling-invariant solutions (see Section \ref{sec3}). This forces a very different choice of the scaling parameter $r$ (see (\ref{self-sim_scaling})) than the one in (\ref{LerayScaling}), partly because the problem is posed in the Fourier setting. Nevertheless we show that the self-similar mild formulation  we use -- (\ref{mildssns}), and the Leray equation (\ref{Lerayeqtn}) are in fact equivalent (Proposition \ref{prop:Leray_eq}). Moreover, although the resulting cascade is quite different from the general NSE case described in Section \ref{sec2}, the dilogarithmic density appears naturally in the context of scaling-invariance (see (\ref{mildssns})).

The explosion problems themselves are defined in terms of the {\em explosion time} random variables in both non-symmetric and scaling-invariant cases (see Definitions \ref{expltime} and \ref{selfsimilarexplosion}) -- {\em critical} (with respect to the scaling) quantities. Using the Le Jan-Sznitman martingale argument, we show that in the case of general NSE, the non-explosion of the associated multiplicative cascade provides a {\em scaling-critical sufficient condition for  uniqueness} -- see Proposition \ref{prop:uniq} and Remark \ref{crit_cond}.

A natural question is to compare the case of dilogarithmic majorizing kernel in general, non-symmetric setting to the scaling-invariant case.  While we were unable to fully resolve the associated explosion problems, the main conclusion  of this analysis -- see Theorem \ref{thm:main} -- is that at the level of cascades, 

\vspace{.2in}

\noindent{\em The explosion problem is  the same in both self-similar and general case (in dilogarithmic settings).} 

\vspace{.2in}

This result provides evidence for a lack of symmetry breaking in the Navier-Stokes problem.

\medskip

The rest of the paper is organized as follows.

In Section \ref{sec2} we will use the Le Jan-Sznitman cascade without thinning, together with the idea of majorizing kernels, to formulate an explosion problem (Definition \ref{expltime}) closely connected to the issue of uniqueness of solutions to the mild NSE formulation (\ref{mildNSE}) -- see Proposition \ref{prop:uniq}. Two particular kernels are considered (see (\ref{bessel_kernel}) and (\ref{dilog_kernel})\,). In the case of Bessel kernel, $h_b$, we can prove the non-explosion (see Theorem \ref{th:nonexplosionBessel} in the appendix). In the case of less restrictive dilogarithmic kernel, $h_d$, we prove that the explosion is related to the uniqueness property of a certain solution to a non-linear PDE (see Proposition \ref{prop:PsiDE}).

In Section \ref{sec3}, an analogous procedure will be employed to arrive to an explosion problem in the self-similar case (Definition \ref{selfsimilarexplosion}). In particular, we relate the solutions obtained with this method to the solutions of the above-mentioned Leray equation (Proposition \ref{prop:Leray_eq}), and prove (Theorem \ref{thm:0-1}) that the explosion itself is a zero-one event (i.e. it is essentially deterministic). We also show in Theorem \ref{thm:main} that this explosion event has the same distribution as the explosion in the dilogarithmic case described in Section \ref{sec2} -- the evidence towards similarity between the two uniqueness problems.

Section \ref{sec4} is devoted to comparison of the general and self-similar cases from the point of view of the symmetry breaking question, and discusses some open problems.

Finally, Section \ref{appendix} is the Appendix containing the proofs of the technical results related to the Bessel and dilogarithmic random walks, which appear in the functional settings adopted in Section \ref{sec2}. It is worth noting here that {\em dilogarithmic random walk} (which arises naturally in the context of this paper)
appears to be a new multiplicative stochastic process that may be of broader interest, e.g,  see \cites{ML1995, ML1997, AK1995} for other occurrences of the dilogarithmic distribution in physics.

\section{Navier-Stokes Cascades \& An Explosion Problem}\label{sec2}

Next, we will describe the mathematical framework we use to 
analyze the existence and uniqueness problem for the Navier-Stokes equations more precisely, by specifying the meaning of \lq\lq solution\rq\rq.  
Due to lack of existence results for smooth (classical) solutions, the notion of solution in the {\it weak sense} is frequently used, where derivatives are in the distributional sense, as this allows one to search among  functions $\mathbf{u}$ that are locally square-integrable in space. Namely, a divergence-free vector field $\mathbf{u}(t)$ is called a weak solution if for all $\mathbf{v}:\mathbb{R}^3\to\mathbb{R}^3$ -- smooth, divergence-free functions with compact support, 
\[\langle \mathbf{u}(t),\mathbf{v}\rangle-\langle \mathbf{u}(0),\mathbf{v}\rangle=\int\limits_0^t
\left(\,\nu\langle \mathbf{u}(s),\Delta \mathbf{v}\rangle+\langle \mathbf{u}(s),(\mathbf{u}(s)\cdot\nabla)\, \mathbf{v}\rangle\,\right)\,ds\;,\] 
under the implicit assumptions on $\mathbf{u}$ that make the integrals above valid. Here, $\langle \mathbf{u},\mathbf{v}\rangle=\int_{\mathbb{R}^3} \mathbf{u}\cdot\overline{\mathbf{v}}\,d\mathbf{x}$ is the (complex) $L^2$ inner product. This definition was introduced by Leray to provide a mathematical framework that would accommodate the
possibility that velocities may not be smooth at some \lq\lq small set\rq\rq of points where 
``turbulence'' is present. Indeed, Leray's approach is proven to produce solutions that are are smooth except possibly a singular set of one-dimensional Hausdorff measure zero (see \cite{LC1982}).

Taking Fourier transform in $\mathbf{x}$ in the equation above, we notice that
\[
\langle \hat{\mathbf{u}}(t),\hat{\mathbf{v}}\rangle-\langle \hat{\mathbf{u}}(0),\hat{\mathbf{v}}\rangle=\int\limits_0^t
\left(-\nu\langle |\nboldsymbol{\xi}|^2\hat{\mathbf{u}}(s), \hat{\mathbf{v}}\rangle+\langle \hat{\mathbf{u}}(s),{ \mathcal{F}\left[ (\mathbf{u}(s)\cdot\nabla)\mathbf{v}\right]}
\rangle\right)\,ds,
\]
where $\hat{w}(\nboldsymbol{\xi})=\mathcal{F}[w](\nboldsymbol{\xi})=(2\pi)^{-3/2}\int_{\mathbb{R}^3} w(\mathbf{x})\ e^{-i\mathbf{x}\cdot\nboldsymbol{\xi}}d\nboldsymbol{\xi}$ is the the Fourier transform of $w$.

Using the reality condition $\hat{\mathbf{u}}(-\nboldsymbol{\xi})=\overline{\hat{\mathbf{u}}}(\nboldsymbol{\xi})$, the last term can be written as:
 \[\langle \hat{\mathbf{u}}(s),{\mathcal{F}{\left[(\mathbf{u}(s)\cdot\nabla)\mathbf{v}\right]}}\rangle=\frac{1}{(2\pi)^{3/2}}\langle \hat{\mathbf{u}}(s),\hat{u}_k(s)*(i\xi_k \hat{\mathbf{v}})\rangle=\frac{-i}{(2\pi)^{3/2}}\langle \xi_k\hat{u}_k(s)*\hat{\mathbf{u}}(s), \hat{\mathbf{v}}\rangle,\]
where $v*w(\nboldsymbol{\xi})=\int_{\mathbb{R}^3}v(\nboldsymbol{\xi}-\nboldsymbol{\eta})w(\nboldsymbol{\eta})\,d\nboldsymbol{\eta}$  is the convolution of functions $v$ and $w$. Moreover, to incorporate divergence-free property in the first term of the inner product above, we can write
\[
\langle\frac{-i}{(2\pi)^{3/2}} \xi_k\hat{u}_k(s)*\hat{\mathbf{u}}(s),\mathbf{v}\rangle=\langle \frac{|\nboldsymbol{\xi}|}{(2\pi)^{3/2}}\int\limits_{\mathbb{R}^3}\hat{\mathbf{u}}(\nboldsymbol{\eta},s)\odot_{\nboldsymbol{\xi}} \hat{\mathbf{u}}(\nboldsymbol{\xi}-\nboldsymbol{\eta},s)\, d\nboldsymbol{\eta},\mathbf{v}\rangle, \]
with
\begin{equation}
\label{algebraicop0}
\hat{\mathbf{v}}(\nboldsymbol{\eta_1})\odot_{\nboldsymbol{\xi}} \hat{\mathbf{w}}(\nboldsymbol{\eta_2})=-i(e_{\nboldsymbol{\xi}}\cdot \hat{\mathbf{w}}(\nboldsymbol{\eta_2}))\pi_{{\nboldsymbol{\xi}}^\perp} \hat{\mathbf{v}}(\nboldsymbol{\eta_1}),  
\end{equation}
where ${e}_{\nboldsymbol{\xi}} = {\nboldsymbol{\xi}/|\nboldsymbol{\xi}|}$ and $\pi_{{\nboldsymbol{\xi}}^\perp}\mathbf{v} = \mathbf{v}-(e_{\nboldsymbol{\xi}}\cdot \mathbf{v})e_{\nboldsymbol{\xi}}$ is the projection of $\mathbf{v}$ onto the plane orthogonal to $\nboldsymbol{\xi}$.   

Since $|\nboldsymbol{\xi}|\int_{\mathbb{R}^3}\hat{\mathbf{u}}(\nboldsymbol{\eta},s)\odot_{\nboldsymbol{\xi}} \hat{\mathbf{u}}(\nboldsymbol{\xi}-\nboldsymbol{\eta},s)\, d\eta$ is divergence-free, we conclude that weak solutions satisfy

\[
\hat{\mathbf{u}}(t)-\hat{\mathbf{u}}(0)\overset{\makebox[0pt]{\mbox{\normalfont\tiny a.e.}}}{=}\int\limits_0^t\left(  -\nu|\nboldsymbol{\xi}|^2\hat{\mathbf{u}}(s)+\frac{|\nboldsymbol{\xi}|}{(2\pi)^{3/2}}\int\limits_{\mathbb{R}^3}\hat{\mathbf{u}}(\nboldsymbol{\eta},s)\odot_{\nboldsymbol{\xi}} \hat{\mathbf{u}}(\nboldsymbol{\xi}-\nboldsymbol{\eta},s)\, d\nboldsymbol{\eta}\right)\, ds\;,
\]

which leads to the following mild formulation of the Navier-Stokes equations:

\begin{equation}\label{mildNSE}
\hat{\mathbf{u}}(\nboldsymbol{\xi},t)= \hat{\mathbf{u}}(\nboldsymbol{\xi},0)e^{-\nu|\nboldsymbol{\xi}|^2t}+\int\limits_0^te^{-\nu|\nboldsymbol{\xi}|^2s}\frac{|\nboldsymbol{\xi}|}{(2\pi)^{3/2}}\int\limits_{\mathbb{R}^3}\hat{\mathbf{u}}(\nboldsymbol{\eta},t-s)\odot_{\nboldsymbol{\xi}} \hat{\mathbf{u}}(\nboldsymbol{\xi}-\nboldsymbol{\eta},t-s)\, d\nboldsymbol{\eta}\,d s\;.
\end{equation}

We note that weak solutions automatically satisfy the above mild formulation, and the solutions to (\ref{mildNSE}) are weak solutions provided they are (uniformly locally) square integrable in both space and time variables  (\cite{PL2002}).

{The stochastic cascade framework in Fourier space was
 introduced in \cite{YLAS1997} for the analysis of 
(\ref{mildNSE}).
The basic ingredients of  the recursively defined
 stochastic object (cascade) associated with the 
problem (\ref{mildNSE}) consists of}
(i) a continuous time binary branching Markov process in three-dimensional Fourier wavenumber 
space, and (ii) an algebraic operation $\odot_{\nboldsymbol{\xi}}$
defined in \eqref{algebraicop0}.
The stochastic process is initiated with a Fourier mode $\mathbf{0}\neq\nboldsymbol{\xi}\in\R^3$, where it holds for an exponentially distributed length
of time $T_{\nboldsymbol{\xi}}$ with intensity $\nu |\nboldsymbol{\xi}|^2$.  Upon expiration of time $T_{\nboldsymbol{\xi}}$, the
particle either dies or splits into a pair of frequencies (modes)
$(\mathbf{W}_1,\mathbf{W}_2)\in\R^3\times\R^3$.  The random events of
either dying or splitting occur with equal probabilities and
independently of $T_{\nboldsymbol{\xi}}$.  In the case of a split the new frequencies are 
subject to the local conservation of frequencies condition
\begin{equation}
\label{conserve}
\mathbf{W}_1 + \mathbf{W}_2 = \nboldsymbol{\xi},
\end{equation}
 and distributed according to
  \begin{equation}
  \label{brwtypes}
  \mathbb{E}f({\mathbf W}_1,{\mathbf W}_2) = \int_{\mathbb{R}^3}f(\nboldsymbol{\eta},\nboldsymbol{\xi}-\nboldsymbol{\eta})\frac{h(\nboldsymbol{\eta})h(\nboldsymbol{\xi}-\nboldsymbol{\eta})}{ h*h(\nboldsymbol{\xi})}\, d\nboldsymbol{\eta},
  \end{equation}
where $h:\R^3\backslash\{\mathbf{0}\}\to (0,\infty)$ is a measurable function with full support
 for which $h*h(\nboldsymbol{\xi}) < \infty$
for each $\nboldsymbol{\xi}\neq \mathbf{0}$, introduced in \cite{RBEtAl2003} as a {\it majorizing kernel} for 
(\ref{NSeqn}).   The two special choices given in \cite{YLAS1997} involved 
\begin{gather}
h_d(\nboldsymbol{\xi}) = \frac{1}{|\nboldsymbol{\xi}|^2},\qquad \nboldsymbol{\xi}\neq \mathbf{0},\label{dilog_kernel}\\
h_b(\nboldsymbol{\xi}) = e^{-|\nboldsymbol{\xi}|}/|\nboldsymbol{\xi}|,\qquad \nboldsymbol{\xi}\neq \mathbf{0}, \label{bessel_kernel}
\end{gather}
 for which one has the identities
\begin{equation}
\label{hstarh}
h*h(\nboldsymbol{\xi}) = c|\nboldsymbol{\xi}|h(\nboldsymbol{\xi}), 
\end{equation}
with $c=c_1 = \pi^3$ if $h=h_d$, and $c=c_2 = {2\pi}$ if $h=h_b$. We will refer to $h_b$ as the {\em Bessel majorizing kernel}, and to $h_d$ as {\em dilogarithmic majoring kernel}. The connection with Bessel and dilogarithmic distributions will be given in Propositions \ref{besselbmc} and \ref{dilogbrw} (see also Remark \ref{rem:dilog}).

We define the conditional densities:
\begin{gather*}
  H_d(\eta\,|\,\xi)=\frac{h_d(\nboldsymbol{\eta})h_d(\nboldsymbol{\xi}-\nboldsymbol{\eta})}{h_d*h_d(\nboldsymbol{\xi})}=\frac{|\nboldsymbol{\xi}|}{\pi^3|\nboldsymbol{\xi}-\nboldsymbol{\eta}|^2|\nboldsymbol{\eta}|^2};\\
  H_b(\eta\,|\,\xi)=\frac{h_b(\nboldsymbol{\eta})h_b(\nboldsymbol{\xi}-\nboldsymbol{\eta})}{h_b*h_b(\nboldsymbol{\xi})}=\frac{e^{|\nboldsymbol{\xi}|}}{2\pi} \frac{e^{-|\nboldsymbol{\eta}|} e^{-|\nboldsymbol{\xi}-\nboldsymbol{\eta}|}}{|\nboldsymbol{\eta}||\nboldsymbol{\xi}-\nboldsymbol{\eta}|}.
\end{gather*}

If the particle dies, the (Fourier transformed) 
forcing term, evaluated at its parent mode and appropriately scaled, is attached
to the terminal node.  Otherwise, if branching occurs this rule is repeated from
each of the nodes at respective frequencies $\mathbf{W}_1$ and $\mathbf{W}_2$.  
The focus of the present article is the unforced ($\mathbf{g} = \mathbf{0}$) equation (\ref{NSeqn}), 
in which case such a convention may be viewed as a \lq\lq thinning\rq\rq operation,
that may not be necessary so long as there are only finitely many branches
by any finite time $t$.
  If thinning is applied, however, then the associated genealogical
tree is that of a critical binary Galton-Watson process and therefore
in fact almost surely finite, e.g., see \cites{KAPN1972, RBEW2009}.

To be clear, in the case of no-forcing (${\mathbf g} = 0$)
the Le Jan-Sznitman algorithm results in a 
{\it thinning} of the full binary tree that may be ignored so long as
the branching process is {\it non-explosive}. This observation will
be elaborated upon as a point of focus in the present paper.

 The algebraic operation $\odot_{\nboldsymbol{\xi}}$ is applied to a vector-valued function
of the offspring $(\mathbf{W}_1,\mathbf{W}_2)\in\mathbb{R}^3\times\mathbb{R}^3$,
provided by the initial data $\hat{u}_0:\mathbb{R}^3\to\mathbb{R}^3$,  
at  each node of the genealogical tree having parental wavenumber $\nboldsymbol{\xi}$ as defined by
\begin{equation}
\label{algebraicop}
\hat{\mathbf{u}}_0(\nboldsymbol{\eta}_1)\odot_{\nboldsymbol{\xi}} \hat{\mathbf{u}}_0(\nboldsymbol{\eta}_2)=-i(\mathbf{e}_{\nboldsymbol{\xi}}\cdot \hat{\mathbf{u}}_0(\nboldsymbol{\eta}_2))\pi_{\nboldsymbol{\xi}^\perp} \hat{\mathbf{u}}_0(\nboldsymbol{\eta}_1),  
\end{equation}
where $\mathbf{e}_{\nboldsymbol{\xi}} =
{\nboldsymbol{\xi}/|\nboldsymbol{\xi}|}$ and
$\pi_{\nboldsymbol{\xi}^\perp}\mathbf{v} =
\mathbf{v}-(\mathbf{e}_{\nboldsymbol{\xi}}\cdot \mathbf{v})\mathbf{e}_{\nboldsymbol{\xi}}$ is the projection of $v$ onto the plane orthogonal to $\nboldsymbol{\xi}$.   
Figure \ref{fig1} shows a geometric interpretation of (\ref{algebraicop}).

\begin{figure}[ht]
\begin{center}
\begin{picture}(0,0)%
\includegraphics{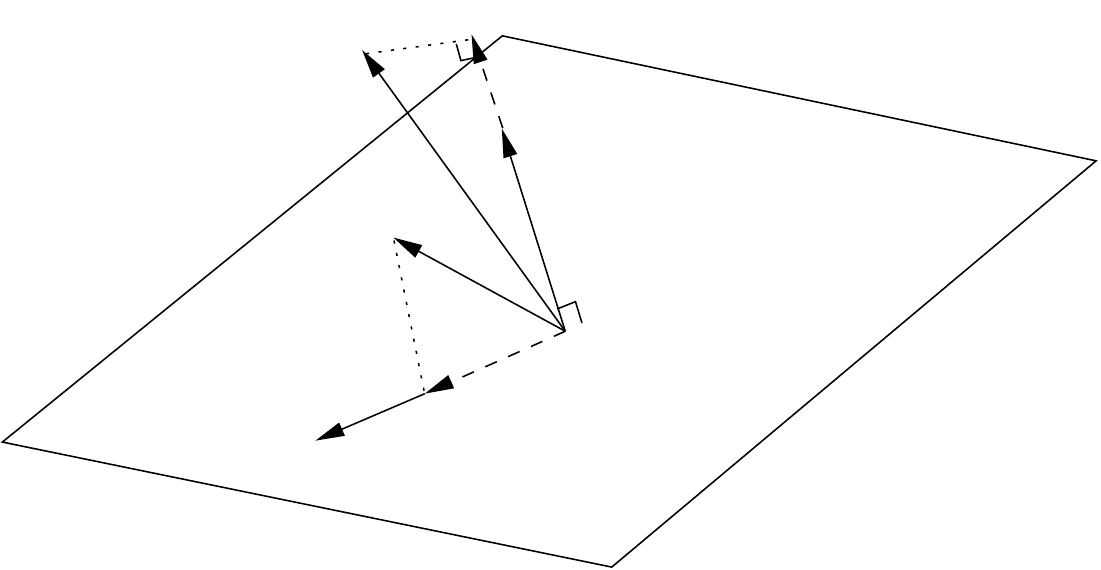}%
\end{picture}%
\setlength{\unitlength}{3947sp}%
\begingroup\makeatletter\ifx\SetFigFont\undefined%
\gdef\SetFigFont#1#2#3#4#5{%
  \reset@font\fontsize{#1}{#2pt}%
  \fontfamily{#3}\fontseries{#4}\fontshape{#5}%
  \selectfont}%
\fi\endgroup%
\begin{picture}(5274,2728)(3512,-3530)
\put(6076,-1636){\makebox(0,0)[lb]{\smash{{\SetFigFont{12}{14.4}{\rmdefault}{\mddefault}{\updefault}{\color[rgb]{0,0,0}$\nboldsymbol{\xi}$}%
}}}}
\put(5851,-2761){\makebox(0,0)[lb]{\smash{{\SetFigFont{12}{14.4}{\rmdefault}{\mddefault}{\updefault}{\color[rgb]{0,0,0}$\pi_{\nboldsymbol{\xi}^\perp}\hat{\mathbf{u}}_0(\nboldsymbol{\eta}_1)$}%
}}}}
\put(4801,-961){\makebox(0,0)[lb]{\smash{{\SetFigFont{12}{14.4}{\rmdefault}{\mddefault}{\updefault}{\color[rgb]{0,0,0}$\hat{\mathbf{u}}_0(\nboldsymbol{\eta}_2)$}%
}}}}
\put(5026,-3136){\makebox(0,0)[lb]{\smash{{\SetFigFont{12}{14.4}{\rmdefault}{\mddefault}{\updefault}{\color[rgb]{0,0,0}$i\hat{\mathbf{u}}_0(\nboldsymbol{\eta}_1)\odot_{\nboldsymbol{\xi}} \hat{\mathbf{u}}_0(\nboldsymbol{\eta}_2)$}%
}}}}
\put(5026,-1861){\makebox(0,0)[lb]{\smash{{\SetFigFont{12}{14.4}{\rmdefault}{\mddefault}{\updefault}{\color[rgb]{0,0,0}$\hat{\mathbf{u}}_0(\nboldsymbol{\eta}_1)$}%
}}}}
\put(5176,-3436){\makebox(0,0)[lb]{\smash{{\SetFigFont{12}{14.4}{\rmdefault}{\mddefault}{\updefault}{\color[rgb]{0,0,0}\colorbox{white}{$=(\mathbf{e}_{\nboldsymbol{\xi}} \cdot \hat{\mathbf{u}}_0(\nboldsymbol{\eta}_2))\pi_{\nboldsymbol{\xi}^\perp}\hat{\mathbf{u}}_0(\nboldsymbol{\eta}_1)$}}%
}}}}
\put(5938,-1287){\makebox(0,0)[lb]{\smash{{\SetFigFont{12}{14.4}{\rmdefault}{\mddefault}{\updefault}{\color[rgb]{0,0,0}\colorbox{white}{$\big(\mathbf{e}_{\nboldsymbol{\xi}}\cdot\hat{\mathbf{u}}_0(\nboldsymbol{\eta}_2)\big)\mathbf{e}_{\nboldsymbol{\xi}}$}}%
}}}}
\end{picture}%
\end{center}
\caption{Geometric interpretation of $\hat{\mathbf{u}}_0(\nboldsymbol{\eta}_1)\odot_{\nboldsymbol{\xi}} \hat{\mathbf{u}}_0(\nboldsymbol{\eta}_2)$.}\label{fig1}
\end{figure}

This stochastic cascade provides a weak solution to (\ref{NSeqn}) for initial data
$\mathbf{u}_0$ as an expected value of a cascade product under the algebraic operation
$\odot_{\nboldsymbol{\xi}}$, 
\begin{equation}
\label{expectation}
\hat{\mathbf{u}}(\nboldsymbol{\xi},t) = |\nboldsymbol{\xi}|^{-2}\mathbb{E}_{\nboldsymbol{\xi}}\mathbf{X}(\tau_t),
\end{equation}
where $\mathbf{X}(\tau_t)$ refers to a $\odot_{\nboldsymbol{\xi}}$-product of initial data and forcing at 
wave numbers determined by the branching Markov chain 
over nodes of the genealogical tree
$\tau_t$ at time $t$,
{\it provided that the indicated expectations exist.} The latter existence of
expected values is an essential proviso whether the cascade is thinned
or not. 
   
Of course the indicated expected values are to be interpreted
component-wise when applied to vector quantities.  In addition to the obvious
decay required on the magnitude of the algebraic multiplications
for existence of expectation integrals, the rate of growth of the tree
is also a significant issue.  Specifically, we will be interested in the possibility
of an {\it explosion event} in which infinitely many branchings occur
within finite time.

 The phenomena of \lq\lq explosion\rq\rq of Markov processes and
its relationship to uniqueness/non-uniqueness of solutions to the corresponding Kolmogorov equations is well-known in the theory
of stochastic processes; e.g., see \cites{TS1969, HFSW1968, RBEW2009}.  
The essence of explosion
is that the stochastic process may leave the state space in finite but
random time $\zeta$, and then be instantaneously returned to the state
space according to some arbitrary distribution.  Moreover this {\it
regenerative extension} 
can then be repeated to obtain a Markov process whose transition probabilities
also satisfy the {\it same} Kolmogorov (backward) equations. However, this is 
a linear Markov process theory that does not directly apply to (\ref{NSeqn}).
Nonetheless, the associated branching process is a Markov process for
which, in the absence of thinning, explosion cannot a priori be ruled out. 
In this context we consider the following.
 
 \begin{definition} 
  \label{expltime}
  The {\it explosion time} of the Fourier mode cascade genealogy {originating at $\nboldsymbol{\xi_0}$}
 is the (possibly infinite) random variable given by
 $$
 \zeta={ \zeta(\nboldsymbol{\xi_0}) =} \lim_{n\to\infty}\min_{|s|=n}\sum_{j=1}^n|\mathbf{W}_{s|j}|^{-2}T_{s|j},
$$
 where for each $n\ge 1$, $|s|=n$, denotes a genealogical sequence 
 $s = (s_1,...,s_n)\in\{1,2\}^n$, and for $j\le n$, 
 $s|j = (s_1,\dots,s_j)$ is the restriction of $s$ to the first $j$ 
 generations.  The random variables $T_{s},
 s\in\cup_{n=1}^\infty\{1,2\}^n$, are i.i.d.\ mean one exponentially
 distributed random variables independent of the Fourier modes
 ${\mathbf{W}_s}, s\in\cup_{n=1}^\infty\{1,2\}^n.$  The event $[\zeta
 < \infty]$ is referred to as an  {\it explosion event}. 
 \end{definition}
 
Define also $$\zeta_n = \min_{|s|=n}\sum_{j=1}^n|\mathbf{W}_{s|j}|^{-2}T_{s|j},$$
and note that by monotonicity one has $\zeta = \lim_{n\to\infty} \zeta_n.$

Various general conditions for explosion/non-explosion will be given
in the next section. A consequences of non-explosion is as follows.

While the full connection is incomplete, the following provides some
further evidence of a connection between explosion and the uniqueness
problem for (\ref{NSeqn}).

\begin{prop}\label{prop:uniq}
If there is no explosion then the stochastic cascade solution provides
the unique mild solution to Navier-Stokes equations whenever the indicated expectations exist.
\end{prop}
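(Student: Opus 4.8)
The plan is to run the classical Le Jan--Sznitman martingale/stochastic-representation argument, adapted to the no-thinning setting where the only thing that can go wrong is explosion. First I would fix a candidate mild solution $\hat{\mathbf v}(\nboldsymbol{\xi},t)$ lying in the functional class for which the relevant expectations in \eqref{expectation} are finite (the same class in which the stochastic cascade solution $\hat{\mathbf u}$ is constructed), and show that any such $\hat{\mathbf v}$ must in fact equal the cascade expectation. The key device is to expand $\hat{\mathbf v}$ recursively through its own mild equation \eqref{mildNSE}: at the root wavenumber $\nboldsymbol{\xi}_0$, the first term $\hat{\mathbf v}(\nboldsymbol{\xi}_0,0)e^{-\nu|\nboldsymbol{\xi}_0|^2 t}$ is exactly the contribution of the event that the initial particle dies before time $t$, while the integral term, after inserting the branching density \eqref{brwtypes} and the exponential holding time $T_{\nboldsymbol{\xi}_0}$ with intensity $\nu|\nboldsymbol{\xi}_0|^2$, is exactly the conditional expectation, given a branching into $(\mathbf W_1,\mathbf W_2)$ at time $s$, of $|\nboldsymbol{\xi}_0|/\big((2\pi)^{3/2}\big)$ times $\hat{\mathbf v}(\mathbf W_1,t-s)\odot_{\nboldsymbol{\xi}_0}\hat{\mathbf v}(\mathbf W_2,t-s)$, once one uses the majorizing-kernel identity \eqref{hstarh} to match the normalizing constant. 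Thus $\hat{\mathbf v}$ satisfies the same one-step renewal identity as $|\nboldsymbol{\xi}_0|^{-2}\mathbb E_{\nboldsymbol{\xi}_0}\mathbf X(\tau_t)$.

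Next I would iterate this expansion down the genealogical tree to depth $n$. Stopping the recursion at generation $n$ produces an identity expressing $\hat{\mathbf v}(\nboldsymbol{\xi}_0,t)$ as the expectation of a $\odot$-product over the nodes of $\tau_t$ that either terminated (a ``death,'' contributing initial data) or survive at generation $n$ (contributing a factor $\hat{\mathbf v}$ evaluated at the corresponding wavenumber and residual time). Writing $M_n$ for this generation-$n$ quantity, the construction shows $\{M_n\}$ is a martingale with $\mathbb E M_n = \hat{\mathbf v}(\nboldsymbol{\xi}_0,t)$ for every $n$, and the same argument with $\hat{\mathbf v}$ replaced by $\hat{\mathbf u}$ gives the corresponding martingale for the cascade solution. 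On the non-explosion event $[\zeta=\infty]$ — which by hypothesis has probability one — every branch of $\tau_t$ is killed by time $t$ after finitely many generations, so the ``survivors at generation $n$'' term disappears for $n$ large (path-by-path), and $M_n$ converges almost surely to the same limit $\mathbf X(\tau_t)$ regardless of whether we started the recursion from $\hat{\mathbf v}$ or from $\hat{\mathbf u}$. A dominated-convergence / uniform-integrability step then upgrades this to convergence in expectation, yielding $\hat{\mathbf v}(\nboldsymbol{\xi}_0,t) = |\nboldsymbol{\xi}_0|^{-2}\mathbb E_{\nboldsymbol{\xi}_0}\mathbf X(\tau_t) = \hat{\mathbf u}(\nboldsymbol{\xi}_0,t)$, which is the claimed uniqueness.

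The main obstacle is the passage from almost-sure convergence of $M_n$ to convergence of expectations: one needs an integrable dominating function for the tree products, uniformly in $n$. This is where the functional class enters — the majorizing kernel $h$ is chosen precisely so that the $\odot$-product magnitudes along the tree are controlled by a recursively bounded quantity (compare the $|\nboldsymbol{\xi}|^{-2}$ weighting in \eqref{expectation} built from $h_d$), and non-explosion guarantees the genealogical sums stay finite; combining these gives an $L^1$ bound on $\sup_n |M_n|$, or at least uniform integrability of $\{M_n\}$. I would phrase this as: assuming $\hat{\mathbf v}$ belongs to the same weighted space in which the cascade expectation \eqref{expectation} is finite, the products $M_n$ are dominated by the corresponding cascade majorant, whose expectation is finite and independent of $n$; hence Vitali's theorem applies. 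A secondary, more bookkeeping-level point is verifying that the thinning convention (death with probability $1/2$ at each node) is correctly accounted for in matching the $\tfrac12$-$\tfrac12$ split against the two terms of \eqref{mildNSE} together with the factor $c$ from \eqref{hstarh}; this is routine but must be done carefully so the constants balance. No regularity of $\hat{\mathbf v}$ beyond membership in the relevant space is used, consistent with the paper's stated scope.
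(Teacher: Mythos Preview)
Your proposal is correct and is precisely the Le Jan--Sznitman martingale argument that the paper invokes; the paper's own proof is a one-line citation to \cite{YLAS1997}, and you have simply unpacked what that citation contains (one-step renewal identity, iteration to depth $n$, martingale, a.s.\ convergence on the non-explosion event, uniform integrability to pass to expectations).

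One small point to clean up: your final paragraph refers to ``the thinning convention (death with probability $1/2$ at each node)'' and a ``$\tfrac12$--$\tfrac12$ split,'' but the whole point of this paper's setup is that thinning has been \emph{removed}---particles always branch, and the only way a node terminates is that its holding time carries it past the horizon $t$. So the first term of \eqref{mildNSE} corresponds to the event $[T_{\nboldsymbol{\xi}}>t]$ (probability $e^{-\nu|\nboldsymbol{\xi}|^2 t}$), not to a coin flip, and there is no factor of $\tfrac12$ to balance; the constant $c$ from \eqref{hstarh} is absorbed directly into the normalization of the branching density. This does not affect your core argument, but the bookkeeping you flag is different from what you describe.
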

\begin{proof}
If explosion does not occur then the stochastic cascade is recursively
well-defined and the same martingale (inductive) arguments of 
\cite{YLAS1997} may be applied.
\end{proof}

\begin{remark}
For the converse case that explosion can be shown to occur, one may 
construct the regenerative extensions as mentioned above.  However 
it is not clear whether or not this is a pathway to non-uniqueness. 
\end{remark}

{
\begin{remark}\label{crit_cond}
 Note that if we re-scale NSE according to scaling (\ref{scaling}), then the explosion time random variable for the re-scaled cascade, $\zeta_r$, and $\zeta$ from Definition \ref{expltime} have the same distribution. Thus the non-explosion provides a {\em critical} (or scaling invariant) condition for uniqueness of the Navier-Stokes equations.
 \end{remark}
}

From the point of view of uniqueness of solutions to 
associated PDEs, it is interesting to note that the absence of explosion does 
correspond to the uniqueness of solutions for an evolution
equation.  In order to state this equation we first define an operator
$\Lambda$ by 
\begin{equation}
\label{calderon}
\Lambda (f) (\mathbf{x}) = {\mathcal{F}}^{-1} {(|\nboldsymbol{\xi}| \hat{f}(\nboldsymbol{\xi}))} (\mathbf{x}),
\end{equation}
where ${\mathcal{F}}^{-1}$ denotes inverse Fourier transform.

This operator acts to increase the higher frequency oscillations of
$f(\mathbf{x})$ by the same magnitude as differentiation.  As this operator is
closely related to differentiation, it is known as a
pseudo-differential operator.  The evolution equation associated to the
branching process as follows, which contains
this pseudo-differential operator, is as follows. 



\begin{prop}
\label{prop:PsiDE}
Let $h=h_d$ or $h=h_b$ and assume that the pseudo-differential equation
$$
\left\{\begin{gathered}
  \frac{\partial v}{\partial t} = \Delta v + c \Lambda (v^2)\\
   v(\mathbf{x},0) = {\mathcal{F}}^{-1}(h) (\mathbf{x})
\end{gathered}\right.
$$
with $c=\pi^3$ or $c=2\pi$ respectively, has a unique mild solution satisfying
$|\hat{v}(\nboldsymbol{\xi},t)| \leq h(\nboldsymbol{\xi})$ for all $t\geq 0.$  Then explosion does not occur, i.e.
$
\mathbb{P}([\zeta < \infty]) = 0.
$
\end{prop}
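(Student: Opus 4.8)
The plan is to show that the non-explosion probabilities of the full binary cascade assemble into a mild solution of the stated equation, so that the assumed uniqueness pins them down. Write $g(\boldsymbol{\xi},t)=\mathbb{P}_{\boldsymbol{\xi}}(\zeta(\boldsymbol{\xi})>t)$ for the probability that the cascade rooted at $\boldsymbol{\xi}\neq\mathbf{0}$ has not exploded by time $t$. Since $\zeta_n\uparrow\zeta$ and $\zeta>0$ a.s., one has $\mathbb{P}_{\boldsymbol{\xi}}([\zeta<\infty])=1-\lim_{t\to\infty}g(\boldsymbol{\xi},t)$, so it suffices to prove $g(\boldsymbol{\xi},t)\equiv 1$. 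First I would carry out a first-branching analysis: conditioning on the first split $\boldsymbol{\xi}\to(\mathbf{W}_1,\mathbf{W}_2)$ (with $\mathbf{W}_1+\mathbf{W}_2=\boldsymbol{\xi}$, law $H(\boldsymbol{\eta}\,|\,\boldsymbol{\xi})\,d\boldsymbol{\eta}$) and on the independent holding times $|\mathbf{W}_i|^{-2}T_i$, and working first with the truncations $g_n(\boldsymbol{\xi},t)=\mathbb{P}_{\boldsymbol{\xi}}(\zeta_n>t)$ so the recursion closes, one obtains $g_{n+1}(\boldsymbol{\xi},t)=\mathbb{E}_{\boldsymbol{\xi}}[A_n(\mathbf{W}_1,t)A_n(\mathbf{W}_2,t)]$ with $A_n(\boldsymbol{\zeta},t)=e^{-|\boldsymbol{\zeta}|^2 t}+\int_0^t|\boldsymbol{\zeta}|^2 e^{-|\boldsymbol{\zeta}|^2 s}g_n(\boldsymbol{\zeta},t-s)\,ds$ and $g_0\equiv 0$. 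Since $0\le g_n\uparrow$ and everything is bounded by $1$, monotone convergence gives the fixed-point identity $g(\boldsymbol{\xi},t)=\mathbb{E}_{\boldsymbol{\xi}}[A(\mathbf{W}_1,t)A(\mathbf{W}_2,t)]$ with $A(\boldsymbol{\zeta},t)=e^{-|\boldsymbol{\zeta}|^2 t}+\int_0^t|\boldsymbol{\zeta}|^2 e^{-|\boldsymbol{\zeta}|^2 s}g(\boldsymbol{\zeta},t-s)\,ds$, and $0<A\le 1$ because $e^{-|\boldsymbol{\zeta}|^2 t}+\int_0^t|\boldsymbol{\zeta}|^2 e^{-|\boldsymbol{\zeta}|^2 s}\,ds=1$ and $g\le 1$.

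Next I would set $\hat v(\boldsymbol{\xi},t):=h(\boldsymbol{\xi})A(\boldsymbol{\xi},t)$. Writing the expectation against $H(\boldsymbol{\eta}\,|\,\boldsymbol{\xi})=h(\boldsymbol{\eta})h(\boldsymbol{\xi}-\boldsymbol{\eta})/(c|\boldsymbol{\xi}|h(\boldsymbol{\xi}))$ and using the kernel identity (\ref{hstarh}), the fixed-point identity becomes $h(\boldsymbol{\xi})g(\boldsymbol{\xi},t)=\frac{1}{c|\boldsymbol{\xi}|}(\hat v*\hat v)(\boldsymbol{\xi},t)$; substituting into the formula for $A$ yields $\hat v(\boldsymbol{\xi},t)=h(\boldsymbol{\xi})e^{-|\boldsymbol{\xi}|^2 t}+\frac1c\int_0^t|\boldsymbol{\xi}|e^{-|\boldsymbol{\xi}|^2 s}(\hat v*\hat v)(\boldsymbol{\xi},t-s)\,ds$. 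That is (after matching the normalization constant, which is exactly where the values $c=\pi^3$, resp.\ $c=2\pi$, enter) $\hat v$ satisfies the mild/Duhamel form of $\partial_t v=\Delta v+c\Lambda(v^2)$ with $\hat v(\cdot,0)=h$, and $0\le\hat v(\boldsymbol{\xi},t)\le h(\boldsymbol{\xi})$, so $\hat v$ is a mild solution in the class covered by the hypothesis. (Equivalently, the $hA_n$ are exactly the Picard iterates of this mild equation started from $h$, increasing to $hA$.) On the other side, the time-independent function $\hat v_\ast(\boldsymbol{\xi},t)\equiv h(\boldsymbol{\xi})$ trivially obeys $|\hat v_\ast|\le h$ and, again by (\ref{hstarh}), $h e^{-|\boldsymbol{\xi}|^2 t}+\frac1c\int_0^t|\boldsymbol{\xi}|e^{-|\boldsymbol{\xi}|^2 s}(h*h)(\boldsymbol{\xi})\,ds=h e^{-|\boldsymbol{\xi}|^2 t}+|\boldsymbol{\xi}|^2 h\int_0^t e^{-|\boldsymbol{\xi}|^2 s}\,ds=h$, so $\hat v_\ast$ is also a mild solution in that class.

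Then the assumed uniqueness forces $\hat v\equiv\hat v_\ast$, i.e.\ $A\equiv 1$. Feeding $A\equiv 1$ back into $A(\boldsymbol{\xi},t)=e^{-|\boldsymbol{\xi}|^2 t}+\int_0^t|\boldsymbol{\xi}|^2 e^{-|\boldsymbol{\xi}|^2 s}g(\boldsymbol{\xi},t-s)\,ds$ gives $\int_0^t|\boldsymbol{\xi}|^2 e^{-|\boldsymbol{\xi}|^2 s}(1-g(\boldsymbol{\xi},t-s))\,ds=0$ for every $t>0$; since $0\le 1-g\le 1$, the integrand vanishes a.e., so $g(\boldsymbol{\xi},\cdot)=1$ a.e., and since $t\mapsto g(\boldsymbol{\xi},t)$ is non-increasing with $g(\boldsymbol{\xi},0)=1$ we conclude $g\equiv 1$. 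Hence $\mathbb{P}_{\boldsymbol{\xi}}([\zeta<\infty])=1-\lim_{t\to\infty}g(\boldsymbol{\xi},t)=0$, which is the claim.

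The first-branching computation and the passage to the limit are routine. The step I expect to be the main obstacle is confirming that $\hat v=hA$ genuinely qualifies as ``a mild solution satisfying $|\hat v(\boldsymbol{\xi},t)|\le h(\boldsymbol{\xi})$'' in the precise sense used in the hypothesis, in particular that it is the inverse Fourier transform of a bona fide function or tempered distribution and that the Duhamel integral is finite: for $h=h_b$ this is immediate, but for $h=h_d(\boldsymbol{\xi})=|\boldsymbol{\xi}|^{-2}$ one must handle the (integrable) singularity at the origin and the slow decay at infinity, the point being precisely that $h_d*h_d=\pi^3|\boldsymbol{\xi}|^{-1}<\infty$, i.e.\ (\ref{hstarh}). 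The other bookkeeping item is checking that the constant emerging from the cascade recursion matches $c$ in the statement, which is where the two kernels must be treated separately. Modulo these points the whole argument reduces to the single observation that, thanks to (\ref{hstarh}), the non-explosion probabilities of the cascade and the constant profile $h$ are two mild solutions of the same equation in the same class.
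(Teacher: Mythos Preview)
Your argument is correct and follows essentially the same route as the paper. Both proofs show that $h$ times the non-explosion probability of the cascade satisfies the mild form of $\partial_t v=\Delta v+c\Lambda(v^2)$, observe that the constant-in-time profile $h$ is also a solution (precisely because of the identity $h*h=c|\boldsymbol{\xi}|h$), and invoke the assumed uniqueness to force equality. The only organizational difference is that the paper decomposes the non-explosion event according to the population size, writing $m(\boldsymbol{\xi},t)=\sum_{k\ge1}\mathbb{P}_{\boldsymbol{\xi}}(Z(\boldsymbol{\xi},t)=k)$ and conditioning on the root's first branching time, whereas you truncate by depth via $g_n=\mathbb{P}(\zeta_n>t)$ and pass to the limit by monotonicity; your auxiliary function $A$ is exactly the paper's $m$, since substituting $g=\mathbb{E}_{\boldsymbol{\xi}}[A(\mathbf{W}_1,\cdot)A(\mathbf{W}_2,\cdot)]$ into the definition of $A$ reproduces the paper's fixed-point equation for $m$. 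Your closing caveats about the constant bookkeeping and about interpreting $\hat v$ as a genuine mild solution are well placed; the paper itself does not elaborate on either point.
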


\begin{proof}
Fix $h=h_d,$ or $h=h_b$ and  $c=\pi^3$ or $c=2\pi$ respectively.
Let $Z(\nboldsymbol{\xi},t)$ denote the number of offspring by time $t\ge 0$
for the Fourier mode cascade genealogy starting at time $t=0$ at frequency 
$\mathbf{0}\neq \nboldsymbol{\xi}\in \mathbb{R}^k$.   
In particular,
after an exponentially distributed time  with
parameter $|\nboldsymbol{\xi}|^2$,
 the parent particle is replaced by two particles of  
 frequencies $\nboldsymbol{\eta}, \nboldsymbol{\xi}-\nboldsymbol{\eta}$ where $\nboldsymbol{\eta}$ has
probability density 
\begin{equation}
\label{def:H}
H(\nboldsymbol{\eta}|\nboldsymbol{\xi})=h(\nboldsymbol{\xi}-\nboldsymbol{\eta})/(c h*h(\nboldsymbol{\xi})).
\end{equation}

For $k\geq 1,$ let 
\begin{equation}
{m}(\nboldsymbol{\xi},t,k) = P_{\nboldsymbol{\xi}}(Z(\nboldsymbol{\xi},t)=k).
\end{equation}
In particular ${m}(\nboldsymbol{\xi},t,1) = \exp(-|\nboldsymbol{\xi}|^2 t).$
Explosion does not occur if and only if
$P_{\nboldsymbol{\xi}}(Z(\nboldsymbol{\xi},t) = \infty)=0,$
or equivalently
$
\sum_{k=0}^\infty {m}(\nboldsymbol{\xi},t,k) = 1.$

Let $v(\nboldsymbol{\xi},t,k) = h(\nboldsymbol{\xi}) {m}(\nboldsymbol{\xi},t,k).$ 
Then trivially $v(\nboldsymbol{\xi},t,1) = h(\nboldsymbol{\xi})\exp(-|\nboldsymbol{\xi}|^2 t).$  Moreover,
 conditioning on the time of the first
branching, it follows using \eqref{hstarh} 
that for $k\geq 2, v(\nboldsymbol{\xi},t,k)$ satisfies the integral equation
\begin{equation}
\label{branching}
v(\nboldsymbol{\xi},t,k) = c |\nboldsymbol{\xi}| \sum_{j=1}^{k-1} \int_0^t \int_{\R^3} v(\nboldsymbol{\eta},t-s,j) v(\nboldsymbol{\xi}-\nboldsymbol{\eta},t-s,k-j) d\nboldsymbol{\eta} e^{-|\nboldsymbol{\xi}|^2 s} ds,\quad v(\nboldsymbol{\xi},0,k) = 0.
\end{equation}

Let $\hat{v}(\nboldsymbol{\xi},t) = \sum_{k=1}^\infty v(\nboldsymbol{\xi},t,k).$  This series converges
 since all terms are 
non-negative and the partial sums are clearly bounded above by $h(\nboldsymbol{\xi})$.  
Note also that 
\begin{equation}
\label{eq:uniqueness}
\hat{v} (\nboldsymbol{\xi},t) = h(\nboldsymbol{\xi})  \sum_{k=0}^\infty {m}(\nboldsymbol{\xi},t,k) .
\end{equation}
Summing (\ref{branching}) for $k\geq 2$, 
and adding the missing term $v(\nboldsymbol{\xi},t,1)$ one finds that $\hat{v}$ satisfies
\begin{equation}
\label{totalfourier}
\hat{v}(\nboldsymbol{\xi},t) =h(\nboldsymbol{\xi}) \exp(-|\nboldsymbol{\xi}|^2 t) + c|\nboldsymbol{\xi}| \int_0^t \int_{\R^3} 
\hat{v}(\nboldsymbol{\eta},t-s) \hat{v}(\nboldsymbol{\xi}-\nboldsymbol{\eta},t-s) d\nboldsymbol{\eta} e^{-|\nboldsymbol{\xi}|^2 s} ds
\end{equation}
It follows that  $v(\mathbf{x},t),$ the inverse Fourier transform of $\hat{v}(\nboldsymbol{\xi},t),$ satisfies
the following reaction-diffusion equation
 of \cite{SM2001},
\begin{equation}
\label{total}
\frac{\partial v}{\partial t}(\mathbf{x},t) = \Delta v(\mathbf{x},t) +
c \Lambda(v^2)(\mathbf{x},t)
\end{equation}
with initial data $v(\mathbf{x},0) = {\mathcal{F}}^{-1}(h)(\mathbf{x}).$  

One may easily check that with $\check{h} = {\mathcal{F}}^{-1}(h),$
\begin{equation}
\Delta {\check{h}} + c \Lambda ({\check{h}})^2(\mathbf{x}) = 0.
\end{equation}
By hypothesis, ${v}(\mathbf{x},t) = {\check{h}}(\mathbf{x})$ 
and thus, using (\ref{eq:uniqueness}),
\begin{equation}
\label{welldefined}
P_{\nboldsymbol{\xi}}(\zeta > t) = P_{\nboldsymbol{\xi}}(Z(\nboldsymbol{\xi},t) < \infty) = \sum_{k=1}^\infty {m}(\nboldsymbol{\xi},t,k) = 1,\quad \forall \nboldsymbol{\xi},t.
\end{equation}
\end{proof}

\begin{remark}
Note that since $h_d$ and $h_b$ are radially symmetric, the cascade, hence $m(\nboldsymbol{\xi},t,k)$ and
$m(\nboldsymbol{\xi},t) \equiv \sum_{k=0}^\infty m(\nboldsymbol{\xi},t,k)$ are also radially 
symmetric.  In particular, it follows that with $H$ defined by (\ref{def:H}),
\begin{equation}
\label{mequationns}
m{}(|\nboldsymbol{\xi|},t) = \exp(-|\nboldsymbol{\xi}|^2 t) + |\nboldsymbol{\xi}|^2 \int_0^t \int_{\R^3} 
{m}(|\nboldsymbol{\eta}|,t-s) {m}(|\nboldsymbol{\xi}-\nboldsymbol{\eta}|,t-s) 
H(\nboldsymbol{\eta}|\nboldsymbol{\xi} )\,\rmd\nboldsymbol{\eta} {\rm{e}}^{-|\nboldsymbol{\xi}|^2 s} \,\rmd
s
\end{equation}
\end{remark}


\begin{remark}
In 2007 Chris Orum has announced the equation of Proposition \ref{prop:PsiDE},
 and its role in the explosion problem in a session of the 32nd Conference on Stochastic Processes and their Applications, Champaign-Urbana.  However the uniqueness/explosion problems remain unsolved for general majorizing kernels $h$ as
initial data.  
\end{remark}

We conclude this section with a small further elaboration on the 
probabilistic significance of the two kernels $h_d,h_b$.  Additional
features are discussed in the appendix. 
 
Let
\begin{equation} 
\label{pseudodiff} 
a(\nboldsymbol{\xi}) = \frac{h(\nboldsymbol{\xi}) |\nboldsymbol{\xi}|^2}{h*h(\nboldsymbol{\xi})}.
\end{equation}
Then, for either  kernel $ h = h_d$ or  $h = h_b$, 
one has that
 $a_i(\nboldsymbol{\xi}) = c^{-1}|\nboldsymbol{\xi}|$, $i=1,2$
  (\ref{pseudodiff}) with $c=\pi^3$ or $c=2\pi$ respectively, defines the same pseudo-differential operator given by a positive multiple 
  of  $\sqrt{-\Delta}$. However 
the following two propositions dramatically distinguish the associated branching
Markov chains. 

\begin{prop} 
\label{dilogbrw}
Assume that $h(\nboldsymbol{\xi})\equiv h_d(\nboldsymbol{\xi}) = |\nboldsymbol{\xi}|^{-2}, \mathbf{0}\neq \nboldsymbol{\xi}\in\mathbb{R}^3.$ 
Then for
each $s\in\{1,2\}^\infty$, the sequence $\{{|\mathbf{W}_{s|j+1}|/|\mathbf{W}_{s|j}|} : j = 0,1,\dots\}$, $\mathbf{W}_{s|0} = \nboldsymbol{\xi}$, is an i.i.d.\ sequence under $P_{\nboldsymbol{\xi}}$, such that,
\begin{eqnarray*}
&& P_{\nboldsymbol{\xi}}\Big(\frac{|\mathbf{W}_{s|j+1}|}{ |\mathbf{W}_{s|j}|}\in dr\Big) = 2\pi^{-2}\ln \Big|\frac{1+r}{ 1-r}\Big|\,\frac{dr}{ r}, \quad r > 0.\\
 &&  P_{\nboldsymbol{\xi}}\Big(\ln \frac{|\mathbf{W}_{s|j+1}|}{  |\mathbf{W}_{s|j}|}\in dt\Big) = 2\pi^{-2}\ln|\coth({t/2})|\,dt, \quad t\in \R.
 \end{eqnarray*}
\end{prop}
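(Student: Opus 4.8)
## Proof plan for Proposition \ref{dilogbrw}

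The plan is to compute the joint law of the offspring wavenumbers directly from the splitting density \eqref{brwtypes} with $h = h_d$, and then extract the law of the ratio $|\mathbf{W}_{s|j+1}|/|\mathbf{W}_{s|j}|$ by integrating out the angular degrees of freedom. First I would observe that by the scaling invariance of $h_d$ (it is homogeneous of degree $-2$), the conditional density $H_d(\boldsymbol{\eta}\mid\boldsymbol{\xi})$ transforms covariantly: if $\boldsymbol{\xi}' = \lambda\boldsymbol{\xi}$ then the law of $\mathbf{W}_1/|\boldsymbol{\xi}'|$ given parent $\boldsymbol{\xi}'$ coincides with the law of $\mathbf{W}_1/|\boldsymbol{\xi}|$ given parent $\boldsymbol{\xi}$; moreover by radial symmetry of $h_d$ it depends only on $|\boldsymbol{\xi}|$, not the direction. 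Consequently, starting from $\mathbf{W}_{s|0} = \boldsymbol{\xi}$, each successive ratio $R_j := |\mathbf{W}_{s|j+1}|/|\mathbf{W}_{s|j}|$ has a law that does not depend on $j$ or on the realized history (only on the fact that we follow the fixed address $s$, along which at each level we pick the designated child), and the Markov branching structure makes the $R_j$ independent. This gives the i.i.d.\ claim; the remaining work is to identify the common distribution.

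To get the density of $R := |\boldsymbol{\eta}|/|\boldsymbol{\xi}|$ under $H_d(\cdot\mid\boldsymbol{\xi})$, I would fix $|\boldsymbol{\xi}| = 1$ (by the scaling reduction above) and write, for a test function $\phi$,
\[
\mathbb{E}_{\boldsymbol{\xi}}\,\phi(|\boldsymbol{\eta}|) = \frac{|\boldsymbol{\xi}|}{\pi^3}\int_{\mathbb{R}^3} \phi(|\boldsymbol{\eta}|)\,\frac{d\boldsymbol{\eta}}{|\boldsymbol{\eta}|^2\,|\boldsymbol{\xi}-\boldsymbol{\eta}|^2}.
\]
Passing to spherical coordinates $\boldsymbol{\eta} = (\rho,\theta,\varphi)$ about the axis $\boldsymbol{\xi}$, the measure becomes $\rho^2\,d\rho\,\sin\theta\,d\theta\,d\varphi/(\rho^2|\boldsymbol{\xi}-\boldsymbol{\eta}|^2)$, and since $|\boldsymbol{\xi}-\boldsymbol{\eta}|^2 = 1 + \rho^2 - 2\rho\cos\theta$ the $\varphi$ integral gives $2\pi$ and the $\theta$ integral is the elementary
\[
\int_0^\pi \frac{\sin\theta\,d\theta}{1+\rho^2 - 2\rho\cos\theta} = \frac{1}{2\rho}\ln\frac{(1+\rho)^2}{(1-\rho)^2} = \frac{1}{\rho}\ln\left|\frac{1+\rho}{1-\rho}\right|.
\]
Collecting the constants $|\boldsymbol{\xi}|/\pi^3 \cdot 2\pi = 2/\pi^2$, the density of $R$ on $(0,\infty)$ is $2\pi^{-2}\ln\bigl|\tfrac{1+r}{1-r}\bigr|\,\frac{dr}{r}$, which is the first displayed formula. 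The second formula follows by the change of variables $t = \ln r$, $dr/r = dt$, together with the identity $\bigl|\tfrac{1+e^t}{1-e^t}\bigr| = |\coth(t/2)|$, valid for all $t\in\mathbb{R}$.

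The only genuinely delicate point is checking that the $\theta$-integral is finite and that the resulting density integrates to one — i.e.\ that $H_d$ is a bona fide probability density on $\mathbb{R}^3$, which is exactly the content of the identity \eqref{hstarh} for $h_d$ with $c_1 = \pi^3$, assumed in the excerpt; so I would simply invoke that, or equivalently verify $\int_0^\infty \ln\bigl|\tfrac{1+r}{1-r}\bigr|\,\tfrac{dr}{r} = \pi^2/2$ by a standard dilogarithm evaluation (this is where the name "dilogarithmic" enters). I do not expect a serious obstacle here: the singularities of the integrand at $\rho = 0$, $\rho = 1$, and $\rho = \infty$ are all integrable (logarithmic at $r=1$, and $\ln|\tfrac{1+r}{1-r}|\sim 2/r$ as $r\to\infty$ against $dr/r$ gives $r^{-2}$). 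The main thing to be careful about is the i.i.d.\ assertion: one must phrase precisely that along a \emph{fixed} address $s\in\{1,2\}^\infty$ the designated child's relative magnitude is drawn afresh and independently at each generation — this is immediate from the recursive (branching Markov) construction of the cascade and the scaling/rotation covariance of $H_d$ noted above, but it is worth stating cleanly rather than leaving implicit.
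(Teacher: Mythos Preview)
Your proposal is correct and follows essentially the same route as the paper's proof: both reduce to a unit parent vector via the homogeneity of $h_d$, pass to spherical coordinates about the axis $\nboldsymbol{\xi}$, and evaluate the same $\theta$-integral to extract the radial density, with the second formula obtained by the change of variables $t=\ln r$. Your write-up is in fact slightly more careful than the paper's in spelling out why the ratios are i.i.d.\ (via scaling and rotational covariance of $H_d$ together with the branching Markov structure), a point the paper leaves implicit.
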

\begin{proof}
Part (ii) is an immediate consequence of (i) by a change of variables formula.
As noted above, the  distribution of $|\mathbf{W}|$ was computed as  (1.22) of \cite{YLAS1997}.  Essentially
the same calculations apply to the ratios of magnitudes as follows:
For non-negative and integrable $g$ on $(0,\infty)$,
one has, since $h*h(\nboldsymbol{\xi}) = \pi^3 /|\nboldsymbol{\xi}|$, and  using (\ref{brwtypes}) that
\begin{eqnarray*}
\mathbb{E}_{\nboldsymbol{\xi}} g\Big(\frac{|\mathbf{W}_{s|1}|}{ |\nboldsymbol{\xi}|}\Big) &=&
\pi^{-3}|\nboldsymbol{\xi}|\int_{\mathbb{R}^3} g\Big(\frac{|\nboldsymbol{\eta}|}{|\nboldsymbol{\xi}|}\Big)\frac{d\nboldsymbol{\eta}}{|\nboldsymbol{\eta}|^2|\nboldsymbol{\xi}-\nboldsymbol{\eta}|^2}\\
&=& \pi^{-3}\int_{\mathbb{R}^3}g(|\mathbf{v}|)\frac{d\mathbf{v}}{|\mathbf{v}|^2|\mathbf{u}-\mathbf{v}|^2}, \quad u =\frac {\nboldsymbol{\xi}}{|\nboldsymbol{\xi}|}\\
&=& \pi^{-3}\int_{\mathbb{R}^3}g(|\mathbf{v}|)\frac{d\mathbf{v}}{|\mathbf{v}|^2(|\mathbf{v}|^2 -2\,\mathbf{u}\cdot\mathbf{v} + 1)}\\
&=&\pi^{-3}\int_0^\infty \int_{|\mathbf{w}|=1}g(r)\frac{d\mathbf{w}\, dr}{ r^2-2r\,\mathbf{u}\cdot\mathbf{w} + 1}\\
&=&2\pi^{-2}\int_0^\infty \int_0^{\pi}g(r)\frac{\sin\phi \,d\phi\, dr}{ r^2-2r\cos\phi + 1}\\
&=& 2\pi^{-2}\int_0^\infty g(r)\ln\frac{|1+r|}{ |1-r|}\frac{dr}{ r}.
\end{eqnarray*}
\end{proof}

\

\begin{definition}
The multiplicative random walk  $\{|\mathbf{W}_{s|j+1}| : j = 0,1,\dots\}$ 
on $(0,\infty)$ will be referred to as the {\it dilogarithmic  random walk},
or {\it dilog} random walk for short.
\end{definition}

\

\begin{prop}
\label{besselbmc}
For arbitrary $\nboldsymbol{\xi}\in \mathbb{R}^3$, let $\mathbf{W}$ denote the random vector in $\mathbb{R}^3$ with
density
$$
H_b(\nboldsymbol{\eta} \,|\, \nboldsymbol{\xi}) = \frac{e^{|\nboldsymbol{\xi}|}}{2\pi} \frac{e^{-|\nboldsymbol{\eta}|} e^{-|\nboldsymbol{\xi}-\nboldsymbol{\eta}|}}{|\nboldsymbol{\eta}||\nboldsymbol{\xi}-\nboldsymbol{\eta}|}.
$$
Then
$$
\mathbb{P}({|\mathbf{W}|} \in {\rm{d}}r)
= 
\left\{
\begin{array}{ll}
\frac{1}{|\nboldsymbol{\xi}|}e^{-2r} (e^{2|\nboldsymbol{\xi}|} -1) & {\rm{for}}~~r\geq |\nboldsymbol{\xi}| \\[.1in]
\frac{1}{|\nboldsymbol{\xi}|}(1-e^{-2r}) & {\rm{for}}~~~0\leq r \leq |\nboldsymbol{\xi}|.
\end{array}
\right.
$$
In particular,  along any path $s$, the sequence $|\mathbf{W}_\emptyset| = |\nboldsymbol{\xi}|, |\mathbf{W}_{s|1}|, |\mathbf{W}_{s|2}|,\dots,$
is a Markov chain with stationary  transition probability density 
$$p(u,v) = 
\left\{
\begin{array}{ll}
\frac{1}{u}e^{-2v} (e^{2u} -1) & {\rm{for}}~~v\geq u, u > 0 \\[.1in]
\frac{1}{u}(1-e^{-2v}) & {\rm{for}}~~~0\leq v \leq u, u > 0.
\end{array}
\right.
$$
Moreover,
\begin{equation}
\label{meanreversion}
{{\mathbb{E}_{\nboldsymbol{\xi}}|\mathbf{W}_1| = \frac{|\nboldsymbol{\xi}|+1}{2}.}}
\end{equation}
\end{prop}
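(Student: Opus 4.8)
The plan is to reduce the whole proposition to one computation, namely the law of $|\mathbf{W}|$; once that is in hand the Markov structure and the mean formula follow with essentially no extra work, in close parallel to the treatment of the $h_d$ case in Proposition~\ref{dilogbrw}. First I would fix $\boldsymbol{\xi}\neq\mathbf{0}$, write $a=|\boldsymbol{\xi}|$, and for bounded measurable $g$ on $(0,\infty)$ express
\[
\mathbb{E}_{\boldsymbol{\xi}}\,g(|\mathbf{W}|)=\frac{e^{a}}{2\pi}\int_{\R^3}g(|\boldsymbol{\eta}|)\,\frac{e^{-|\boldsymbol{\eta}|}e^{-|\boldsymbol{\xi}-\boldsymbol{\eta}|}}{|\boldsymbol{\eta}|\,|\boldsymbol{\xi}-\boldsymbol{\eta}|}\,d\boldsymbol{\eta},
\]
and then pass to spherical coordinates with polar axis along $\boldsymbol{\xi}$: with $r=|\boldsymbol{\eta}|$ and $\phi$ the angle between $\boldsymbol{\eta}$ and $\boldsymbol{\xi}$, the azimuthal integration contributes $2\pi$ and $|\boldsymbol{\xi}-\boldsymbol{\eta}|^2=a^2-2ar\cos\phi+r^2$.

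The key step is the angular integral. In the resulting double integral over $(r,\phi)$ I would substitute $\rho=|\boldsymbol{\xi}-\boldsymbol{\eta}|=\sqrt{a^2-2ar\cos\phi+r^2}$, so that $\rho\,d\rho=ar\sin\phi\,d\phi$ and $\rho$ runs over $[\,|a-r|,\,a+r\,]$ as $\phi$ goes from $0$ to $\pi$; the factor $1/\rho$ then cancels and one is left with $\int_{|a-r|}^{a+r}e^{-\rho}\,d\rho=e^{-|a-r|}-e^{-(a+r)}$. Collecting the prefactors gives
\[
\mathbb{E}_{\boldsymbol{\xi}}\,g(|\mathbf{W}|)=\frac{1}{a}\int_0^\infty g(r)\,\bigl(e^{\,a-r-|a-r|}-e^{-2r}\bigr)\,dr,
\]
and splitting according to $r\le a$ (where $e^{\,a-r-|a-r|}=1$) and $r\ge a$ (where it equals $e^{2a-2r}$) produces exactly the claimed two-regime density of $|\mathbf{W}|$; a one-line computation confirms it integrates to $1$.

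It remains to extract the Markov chain and the mean. The density just found depends on $\boldsymbol{\xi}$ only through $a=|\boldsymbol{\xi}|$, and $h_b(\boldsymbol{\eta})h_b(\boldsymbol{\xi}-\boldsymbol{\eta})$ is symmetric under $\boldsymbol{\eta}\leftrightarrow\boldsymbol{\xi}-\boldsymbol{\eta}$, so in the branching mechanism (\ref{brwtypes}) each of the two offspring of a node at frequency $\boldsymbol{\xi}$ has marginal density $H_b(\cdot\,|\,\boldsymbol{\xi})$, whose radial marginal is precisely what was computed. Hence, conditionally on the genealogy through generation $j$, the law of $|\mathbf{W}_{s|j+1}|$ depends only on $|\mathbf{W}_{s|j}|$ and equals $p(|\mathbf{W}_{s|j}|,\cdot)$ with $p$ as stated, which is the Markov property together with time-homogeneity. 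For the final claim I would compute $\mathbb{E}_{\boldsymbol{\xi}}|\mathbf{W}_1|=\int_0^\infty r\,\mathbb{P}(|\mathbf{W}|\in dr)=\frac{1}{a}\int_0^a r(1-e^{-2r})\,dr+\frac{1}{a}(e^{2a}-1)\int_a^\infty re^{-2r}\,dr$; both integrals are elementary, the exponential contributions cancel, the bracket collapses to $\tfrac12 a^2$, and one obtains $\mathbb{E}_{\boldsymbol{\xi}}|\mathbf{W}_1|=(a+1)/2=(|\boldsymbol{\xi}|+1)/2$.

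I do not anticipate a genuine obstacle: the argument is a direct calculation running parallel to the $h_d$ case. The only places that call for care are the absolute value $|a-r|$ appearing after the substitution in the angular integral — this is exactly the source of the two-regime density — and spelling out that the symmetric branching rule yields an honest \emph{time-homogeneous} Markov chain for the magnitudes, not merely a Markov sequence.
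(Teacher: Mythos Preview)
Your proposal is correct and follows essentially the same route as the paper: spherical coordinates with polar axis along $\boldsymbol{\xi}$, the substitution $\rho=|\boldsymbol{\xi}-\boldsymbol{\eta}|$ to evaluate the angular integral, and the resulting case split at $r=|\boldsymbol{\xi}|$. The only cosmetic difference is that the paper computes the tail $\mathbb{P}(|\mathbf{W}|>r)$ and then differentiates, whereas you read off the density directly from $\mathbb{E}_{\boldsymbol{\xi}}g(|\mathbf{W}|)$; your explicit verification of the mean formula is a welcome addition, as the paper merely states it.
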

\begin{proof}
For arbitrary $\nboldsymbol{\xi}\in \mathbb{R}^3\setminus\{\mathbf{0}\}$, let $\mathbf{W}$ denote the random vector in $\mathbb{R}^3$ with density
$$
H_b(\nboldsymbol{\eta}\,|\,\nboldsymbol{\xi}) = \frac{e^{|\nboldsymbol{\xi}|}}{2\pi} \frac{e^{-|\nboldsymbol{\eta}|} e^{-|\nboldsymbol{\xi}-\nboldsymbol{\eta}|}}{|\nboldsymbol{\eta}||\nboldsymbol{\xi}-\nboldsymbol{\eta}|}.
$$
Let $u=|\nboldsymbol{\xi}|,$ and use
spherical coordinates  with
$\rho = |\nboldsymbol{\eta}|$.  Direct calculations exploiting the cylindrical symmetry of the distribution give
\begin{align*}
\mathbb{P}(|\mathbf{W}|> r) =& \frac{e^{u}}{2\pi} (2\pi) \int_{r}^\infty e^{-\rho} \rho
\int_0^\pi 
\frac{\exp(-(\rho^2 - 2\rho u\cos\phi + u^2)^{1/2})} {(\rho^2 - 2\rho u\cos\phi + u^2)^{1/2}}
{\sin \phi}\, d\phi d\rho, \\
=& \frac{e^{u}}{u} (-1) 
 \int_{r}^\infty e^{-\rho} (e^{-(\rho+u)} - e^{-|\rho-u|}) \,d\rho, \\
 =&
\frac{e^{u}}{u} 
\left[
\int_{r}^\infty e^{-\rho} e^{-|\rho-u|} \,d\rho
- 
\frac{e^{-u}}{2} e^{-2r} 
\right]~.
\end{align*}
For $r\geq u,$ the integral in the last expression is 
$
({e^{u}}/{2}) e^{-2r}
$ so that in this case, 
$$
\mathbb{P}(|\mathbf{W}| > r) = \frac{e^{-2r}}{2 u} (e^{2 u} - 1)~.
$$
Similarly, for  $r\leq u$ one obtains that
$$
\left[\int_r^u + \int_u^\infty\right]  e^{-\rho} e^{-|\rho-u|} \,d\rho
= 
e^{-u}(u-r + \frac{1}{2}),
$$
so that in this case,
$$
\mathbb{P}(|\mathbf{W}|> r) 
=
\frac{1}{2 u}(1-e^{-2r}) + \frac{u-r}{u} ~,
$$
and the result follow by differentiation.  
\end{proof}

\begin{remark}
The calculation of the marginal distribution of $|\mathbf{W}_1|$ can be found
 in (\cite{YLAS1997}, Proposition 2.1) for $h=h_d$.  A similar calculation was 
 provided here for ease of reference.  
 The Markov property follows by exploiting the construction together with the 
form of the transition probabilities as functions of the norms; see (\cite{RBEW2009}, 
pp 502-503).  
\end{remark}

\begin{remark}\label{rem:dilog}
 Euler's {\it dilogarithmic function} may be defined by
\begin{equation}
\label{dilog}
\Li_2(r) = -\int_0^r\ln(1-u)\frac{du}{ u}, \quad  r < 1.
\end{equation} 
The dilogarithmic function is a special case of polylogarthmic functions
$\Li_s(x)$ whose domain of definition may be extended to include complex values
of both $x$ and $s$. An extensive literature is available for properties and relationships 
between polylogarithmic functions, with connections to Bose-Einstein and Fermi-Dirac
statistics, e.g., see \cites{ML1995, ML1997, AK1995}  
\end{remark} 

The explosion problem is solved
for the Bessel kernel in the appendix {(Theorem \ref{th:nonexplosionBessel})}, but it remains quite illusive
for the dilogarthmic kernel.  However, as
 will be seen in the next
section, the dilogarithmic kernel is somewhat singled out by 
the self-similarity cascade.  
 
\section{Self-Similar (Navier-Stokes) Cascade \& And Its Associated Explosion Problem}\label{sec3}

In this section, we obtain a stochastic cascade associated to the Navier-Stokes equation 
when self similar solutions are considered.  It should be remarked from the outset
that the kernel $H_d$ occurs naturally in this situation, as a direct consequence
of the scaling properties of the solutions of the Navier-Stokes equations.  We present first
the mild formulation of the Fourier transform of the Navier-Stokes equations for self similar
solutions.  A probabilistic representation for the solution of the resulting equation is given
in terms of what we call the {\textit{self similar cascade}}.  We show that important statistical 
properties of this self similar cascade and the Navier-Stokes cascade obtained using the
dilogarthmic kernel $H_d$ are identical.

As noted in the introduction,
the scaling invariance of the Navier-Stokes equations show that if  $\mathbf{u}(\mathbf{x},t), p(\mathbf{x},t)$  is  a solution 
then for any $r > 0, \mathbf{u}_r(\mathbf{x},t) \equiv r \mathbf{u}(r \mathbf{x}, r^2 t),$
$p_r(\mathbf{x},t) = r^2 p(r \mathbf{x}, r^2 t)$
is also a solution of the Navier-Stokes equations. 
Assuming the initial data is also scale invariant, uniqueness would imply the self-similarity property $\mathbf{u}_r = \mathbf{u}.$
In the Fourier domain, this scale invariance 
corresponds to, with $\mathbf{v} = \mathbf{u}_r,$
$$
\hat{\mathbf{v}}(\nboldsymbol{\xi},t) = \frac{1}{r^2} \hat{\mathbf{u}}(\frac{\nboldsymbol{\xi}}{r}, r^2 t),
$$
so, with $r=|\nboldsymbol{\xi}|,$
\begin{equation}\label{self-sim_scaling}
\hat{\mathbf{v}}(\nboldsymbol{\xi},t) = \frac{1}{|\nboldsymbol{\xi}|^2} \hat{\mathbf{u}}(\mathbf{e}_{\nboldsymbol{\xi}}, |\nboldsymbol{\xi}|^2 t)
\end{equation}
where $\mathbf{e}_{\nboldsymbol{\xi}}=\nboldsymbol{\xi}/|\nboldsymbol{\xi}|.$ 
Thus, since $\hat{\mathbf{v}}$  {satisfies \eqref{mildNSE} it} follows that a self similar solution of
the Navier-Stokes equations satisfies
\begin{align*}
\hat{\mathbf{u}}(\mathbf{e}_{\nboldsymbol{\xi}}, |\nboldsymbol{\xi}|^2t)) &= 
e^{-t|\nboldsymbol{\xi}|^2} \hat{\mathbf{u}}_0(\mathbf{e}_{\nboldsymbol{\xi}}) \\
&+
{(2\pi)^{-3/2}}  \int_0^t e^{-|\nboldsymbol{\xi}|^2(t-s)} |\nboldsymbol{\xi}|^3 \int
\hat{\mathbf{u}}(\mathbf{e}_{\nboldsymbol{\eta}}, |\nboldsymbol{\eta}|^2s) \odot_{\nboldsymbol{\xi}} \hat{\mathbf{u}}(\mathbf{e}_{\nboldsymbol{\xi}-\nboldsymbol{\eta}},|{\nboldsymbol{\xi}} - \nboldsymbol{\eta}|^2 s) \frac{1}{|\nboldsymbol{\eta}|^2|{\nboldsymbol{\xi}}-\nboldsymbol{\eta}|^2}
\,d\nboldsymbol{\eta} d s.
\end{align*}


The change of variables $\nboldsymbol{\eta}=|\nboldsymbol{\xi}|\nboldsymbol{\eta}', s'=|\nboldsymbol{\xi}|^2 s,$ and with $\lambda=|\nboldsymbol{\xi}|^2 t,$ gives
\begin{align*}
\hat{\mathbf{u}}(\mathbf{e}_{\nboldsymbol{\xi}}, \lambda) &= 
e^{-\lambda} \hat{\mathbf{u}}_0(\mathbf{e}_{\nboldsymbol{\xi}}) \\
&+
{(2\pi)^{-3/2}}  \int_0^\lambda e^{-(\lambda-s)} \int
\hat{\mathbf{u}}(\mathbf{e}_{\nboldsymbol{\eta}}, |\nboldsymbol{\eta}|^2s) \odot_{\nboldsymbol{\xi}} \hat{\mathbf{u}}(\mathbf{e}_{\mathbf{e}_{\nboldsymbol{\xi}}-\nboldsymbol{\eta}},|\mathbf{e}_{\nboldsymbol{\xi}} - \nboldsymbol{\eta}|^2 s) \frac{1}{|\nboldsymbol{\eta}|^2|\mathbf{e}_{\nboldsymbol{\xi}}-\nboldsymbol{\eta}|^2}
\,d\nboldsymbol{\eta} d s.
\end{align*}
Recall that $H_{d}(\nboldsymbol{\eta}|\mathbf{e}_{\nboldsymbol{\xi}}) = (\pi^3 |\nboldsymbol{\eta}|^2 |\mathbf{e}_{\nboldsymbol{\xi}} - \nboldsymbol{\eta}|^2)^{-1}$ 
so one has
\begin{equation}
\label{mildssns}
\hat{\mathbf{u}}(\mathbf{e}_{\nboldsymbol{\xi}}, \lambda) = 
e^{-\lambda} \hat{\mathbf{u}}_0(\mathbf{e}_{\nboldsymbol{\xi}}) +
{(\pi/2)^{3/2}}  \int_0^\lambda e^{-(\lambda-s)} \int
\hat{\mathbf{u}}(\mathbf{e}_{\nboldsymbol{\eta}}, |\nboldsymbol{\eta}|^2s) \odot_{\nboldsymbol{\xi}} \hat{\mathbf{u}}(\mathbf{e}_{\mathbf{e}_{\nboldsymbol{\xi}}-\nboldsymbol{\eta}},|\mathbf{e}_{\nboldsymbol{\xi}} - \nboldsymbol{\eta}|^2 s) H_{d}(\nboldsymbol{\eta}|\mathbf{e}_{\nboldsymbol{\xi}})
d\nboldsymbol{\eta} d s.
\end{equation}
We refer to the parameter $\lambda > 0$ as the {\it similarity horizon}. 

A probabilistic interpretation for (\ref{mildssns}) follows similar steps as those introduced before.
Consider a binary tree rooted at $\emptyset$ with vertices indexed by 
${\mathcal{V}} =\cup_{n\geq 1}\{1,2\}^n$ -- see Figure \ref{fig2} for an illustration.  Denote by $\partial{\mathcal{V}} = \{1,2\}^{\mathbb{N}}.$
Elements in each of these sets are denoted by $s$ and $ {\langle s\rangle}$ respectively.
Let
$\{ T_s, s\in{\mathcal{V}}\}$ be a collection of i.i.d.\ random
variables with an exponential distribution with parameter 1. Given a direction 
$\mathbf{e}_{s}$, let
$\tilde{\mathbf{W}}_{s1}$  be a random variable with distribution
$H_{d}(\nboldsymbol{\eta}|\mathbf{e}_{s}),$ set 
$\tilde{\mathbf{W}}_{s2} = \mathbf{e}_{s} -\tilde{ \mathbf{W}}_{s1}$
and for $j=1,2,$ define the directions 
$\mathbf{e}_{sj}=\tilde{\mathbf{W}}_{sj}/|\tilde{\mathbf{W}}_{sj}|.$ 
Finally, given a {\textit{horizon}} $\lambda_{s},$
define for $j=1,2$ $\lambda_{sj} = |\tilde{\mathbf{W}}_{sj}|^2( \lambda_{s} - T_{s}).$
On each  ${\langle s\rangle} \in \partial{\mathcal{V}},$ the branching process stops at level 
\begin{equation}
\label{finiteN}
N_{\langle s \rangle} = \inf\{m\geq0: \lambda_{\langle s|m\rangle} < T_{\langle s|m\rangle}\}.
\end{equation}

Completely analogous to \eqref{expectation}, the solution of \eqref{mildssns} is then given as an expected value of a recursive product involving the algebraic operation 
$\odot_{\mathbf{e}_{\nboldsymbol{\xi}}}$
provided this expectation is finite.  Furthermore, the evaluation of this recursive product can be done if and only if along any path in the binary tree, the random variable $N_{\langle s \rangle}$ defined in \eqref{finiteN}
is finite.

From the definition of the random variables one has 
\begin{eqnarray*}
\lambda_{\langle s|n\rangle} - T_{\langle s|n\rangle}
&=&
((...(((\lambda_\emptyset - T_\emptyset)|\tilde{\mathbf{W}}_{\langle s|1 \rangle}|^2 - T_{\langle s|1 \rangle}) 
|\tilde{\mathbf{W}}_{\langle s|2 \rangle}|^2 - T_{\langle s|2 \rangle})...) 
|\tilde{\mathbf{W}}_{\langle s|n \rangle}|^2 - T_{\langle s|n \rangle}) \\
&=&
 (\prod_{k=0}^{n} |\tilde{\mathbf{W}}_{\langle s|k \rangle}|^2) \left( \lambda_\emptyset
 -
\sum_{j=0}^n T_{{\langle s|j \rangle}} \frac{1}{\prod_{k=0}^{j} |\tilde{\mathbf{W}}_{\langle s|k \rangle}|^2}\right).
\end{eqnarray*}
where we have used that $|\tilde{\mathbf{W}}_{\langle s|0\rangle}|^2 =1.$
Thus, for given $\lambda_\emptyset$ and $\langle s \rangle \in \mathcal{V},$ the event 
$[N_{\langle s \rangle} =n]$ 
equals the event
$$
\inf\{m\geq 0: \sum_{j=0}^m T_{{\langle s|j \rangle}} \frac{1}{\prod_{k=0}^{j} |\tilde{\mathbf{W}}_{\langle s|k \rangle}|^2} \geq \lambda_\emptyset \} = n.
$$


This motivates the following definition.

\begin{definition}
\label{selfsimilarexplosion}
For a fixed unit vector $\mathbf{e}_0, $ the {\textit{similarity explosion horizon}} is the
(possibly infinite) random variable 
$$\zetas(\mathbf{e}_0) = 
\lim_{n\rightarrow \infty} \inf_{| s |=n}
\sum_{j=0}^n T_{{\langle s|j\rangle}} \frac{1}{\prod_{k=0}^{j} |\Tilde{\mathbf{W}}_{\langle s|j\rangle}|^2}.
$$
The {\textit{self similar explosion event}} is defined as $A_{\mathbf{e}_0}=
\cup_{m\geq1}[\zetas(\mathbf{e}_0)<m]$ 
so that ${\mathbb{P}}( A_{\mathbf{e}_0})$
is the probability of {\textit{self similar explosion}}.
\end{definition}

Note that with
$$
\zetas_n(\mathbf{e}_0) = \inf_{| s |=n}
\sum_{j=0}^n T_{{\langle s|j\rangle}} \frac{1}{\prod_{k=0}^{j} |\Tilde{\mathbf{W}}_{\langle s|j\rangle}|^2}
$$
one has, by monotone convergence, that
$\zetas(\mathbf{e}_0) =
\lim_{n\rightarrow \infty}
\zetas_n(\mathbf{e}_0).
$

While the self-similar cascade construction is quite distinct from that of
the Navier-Stokes cascade, one may note that for fixed 
${\langle s\rangle} \in \partial{\mathcal{V}},$
the random variables $\tilde{R}_j = |\tilde{\mathbf{W}}_{\langle s|j \rangle}|, j\geq 1$ 
are i.i.d.\ with the dilogarithmic 
distribution with density
$$\mathcal{D}(r)
=
\frac{2}{\pi^2}
\frac{1}{r} \ln\left(\frac{|1+r|}{|1-r|}\right)
$$
Indeed, since the distribution of 
$\tilde{\mathbf{W}}_{\langle s|1 \rangle}$ depends only in the unit vector $\mathbf{e}_0,$
the proof of Proposition~\ref{dilogbrw} shows that $\tilde{R}_1$ has the dilogarithmic distribution.  The claim
{for  $\tilde{R}_j$} follows by induction.

In order to relate the explosion problems for the self similar cascade and the 
Navier-Stokes cascade,
we have the following result.

\begin{prop}
\label{Prop:direction_independent}
For any $n\geq 0,$ the distribution of $\zetas_n$ is independent of the initial direction and
$$
\zetas_n(\mathbf{e}_0)
\stackrel{\mathcal{D}}{=}\inf_{| s |=n}
\sum_{j=0}^n T_{{\langle s|j\rangle}} \frac{1}{\prod_{k=0}^{j} |\tilde{R}_{j}|^2}
$$
where  $\tilde{R}_0=1,$ and $\{\tilde{R}_j\}_{j=1}^\infty$ is a sequence of i.i.d.\ random variables
with density $\mathcal{D}(r).$  
\end{prop}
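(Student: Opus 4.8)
\emph{Plan.} The strategy is to pass from the $\mathbb{R}^3$-valued self-similar cascade to its radial skeleton and to recognize that skeleton as a direction-free object built from i.i.d.\ dilogarithmic increments. Two facts do all the work: rotational covariance of the splitting kernel $H_d$, and the observation that the radial marginal of $H_d(\cdot\,|\,\mathbf{e})$ does not depend on the unit vector $\mathbf{e}$. A harmless preliminary remark is that the horizon truncation $N_{\langle s\rangle}$ of \eqref{finiteN} is irrelevant here, since $\zetas_n$ only inspects the first $n$ generations of the untruncated tree.

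First I would record that for every rotation $R$ of $\mathbb{R}^3$ one has $H_d(R\nboldsymbol{\eta}\,|\,R\mathbf{e})=H_d(\nboldsymbol{\eta}\,|\,\mathbf{e})$ (both numerator norms are rotation invariant) and that $R$ preserves Lebesgue measure; peeling off one branching at a time, an induction on the generation then shows that if $\{\tilde{\mathbf{W}}_s\}$ is the self-similar cascade issued from $\mathbf{e}_0$, then $\{R\tilde{\mathbf{W}}_s\}$ is, in law, the cascade issued from $R\mathbf{e}_0$ (the map $\tilde{\mathbf{W}}_{sj}\mapsto R\tilde{\mathbf{W}}_{sj}$ sends the direction $\mathbf{e}_{sj}$ to $R\mathbf{e}_{sj}$, so the recursion is respected). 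Since $\zetas_n$ is a function of the $T_{\langle s|j\rangle}$ (untouched by $R$) and of the magnitudes $|\tilde{\mathbf{W}}_s|$ only, and any two unit vectors differ by a rotation, this already gives the direction-independence half of the statement.

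For the identity in distribution I would argue that the magnitude tree $\{\,|\tilde{\mathbf{W}}_s|:s\in{\mathcal{V}}\,\}$, together with $\{T_s\}$, can be rebuilt from i.i.d.\ dilogarithmic weights. The key point is that at any node $s$, conditionally on the whole past up to and including the direction $\mathbf{e}_s$, the offspring pair $(\tilde{\mathbf{W}}_{s1},\tilde{\mathbf{W}}_{s2})=(\tilde{\mathbf{W}}_{s1},\mathbf{e}_s-\tilde{\mathbf{W}}_{s1})$ has law $H_d(\cdot\,|\,\mathbf{e}_s)$, so by the computation carried out in the proof of Proposition~\ref{dilogbrw} — which uses only that $\mathbf{e}_s$ is a unit vector — the conditional joint law of $(|\tilde{\mathbf{W}}_{s1}|,|\tilde{\mathbf{W}}_{s2}|)$ (in particular each marginal equal to $\mathcal{D}$) depends neither on $\mathbf{e}_s$ nor on the past. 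Combined with the fact that, conditionally on the offspring directions, the subtrees rooted at $s1$ and $s2$ evolve as independent fresh self-similar cascades whose magnitude laws are themselves direction-free by the previous paragraph, the tower property applied generation by generation identifies the joint law of $\{\,|\tilde{\mathbf{W}}_s|\,\}$ as the direction-free branching law in which each offspring pair is an independent dilogarithmic pair. Since $|\tilde{\mathbf{W}}_{\langle s|0\rangle}|=1$ and the $T$'s are independent of the $\tilde{\mathbf{W}}$'s, this rewrites $\zetas_n(\mathbf{e}_0)=\inf_{|s|=n}\sum_{j=0}^n T_{\langle s|j\rangle}\prod_{k=0}^j|\tilde{\mathbf{W}}_{\langle s|k\rangle}|^{-2}$ in the asserted form, with dilogarithmic weights that are i.i.d.\ along each ray.

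I expect the delicate point to be bookkeeping the \emph{joint} distribution across the $2^n$ rays rather than merely along a single ray: the sibling magnitudes $|\tilde{\mathbf{W}}_{s1}|$ and $|\tilde{\mathbf{W}}_{s2}|$ are genuinely dependent, being constrained by $\tilde{\mathbf{W}}_{s1}+\tilde{\mathbf{W}}_{s2}=\mathbf{e}_s$, so the along-a-ray i.i.d.\ statement noted just before the proposition is not by itself sufficient, and one must match the full tree law. The recursive Markov structure of the cascade — conditional independence of the two subtrees given the offspring directions, together with the direction-freeness of each offspring's radial law — is exactly what makes this work; the cleanest way to organize it is to couple the self-similar cascade with the corresponding (dilogarithmic) branching random walk by driving both with the same family $\{T_s\}$ and the same family of $H_d(\cdot\,|\,\text{unit vector})$ draws, reusing the realized directions node by node, so that the two constructions agree magnitude by magnitude.
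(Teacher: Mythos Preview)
Your proof is correct and follows essentially the same route as the paper's: orthogonal covariance of $H_d$ gives direction-independence, and the dilogarithmic marginal computed in Proposition~\ref{dilogbrw} identifies the law of the radii. The paper's argument is a terse one-paragraph version of yours; you are more careful than the paper about the joint law across all $2^n$ rays (sibling dependence) versus the i.i.d.\ statement along a single ray, a point the paper's proof simply elides.
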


\begin{proof}
Let $Q$ be an orthogonal 3 by 3 matrix and $\mathbf{e}$ a unit vector in $\R^3.$
Let $\tilde{\nboldsymbol{\eta}}, \nboldsymbol{\eta}^\sharp$ be random vectors
distributed according to $H_d(\nboldsymbol{\eta}|Q \mathbf{e})$ and 
$H_d(\nboldsymbol{\eta}|\mathbf{e})$ respectively.  It follows easily that
in distribution, $\tilde{\nboldsymbol{\eta}}$ and $\nboldsymbol{\eta}^\sharp$ are equal and thus independent of the particular
initial direction $\mathbf{e}$ used in $H.$  
The proof is completed, since as noted above, $\mathcal{D}(r)$ is the
density of $\tilde{R}_j$.
\end{proof}

As a consequence of Proposition \ref{Prop:direction_independent}, 
the distribution of the sequence $\lambda_{\langle v|j \rangle},
j\geq 1$ is  also independent of the initial direction $\mathbf{e}_0.$  


\begin{figure}[ht]

\begin{center}
\begin{picture}(0,0)%
\includegraphics{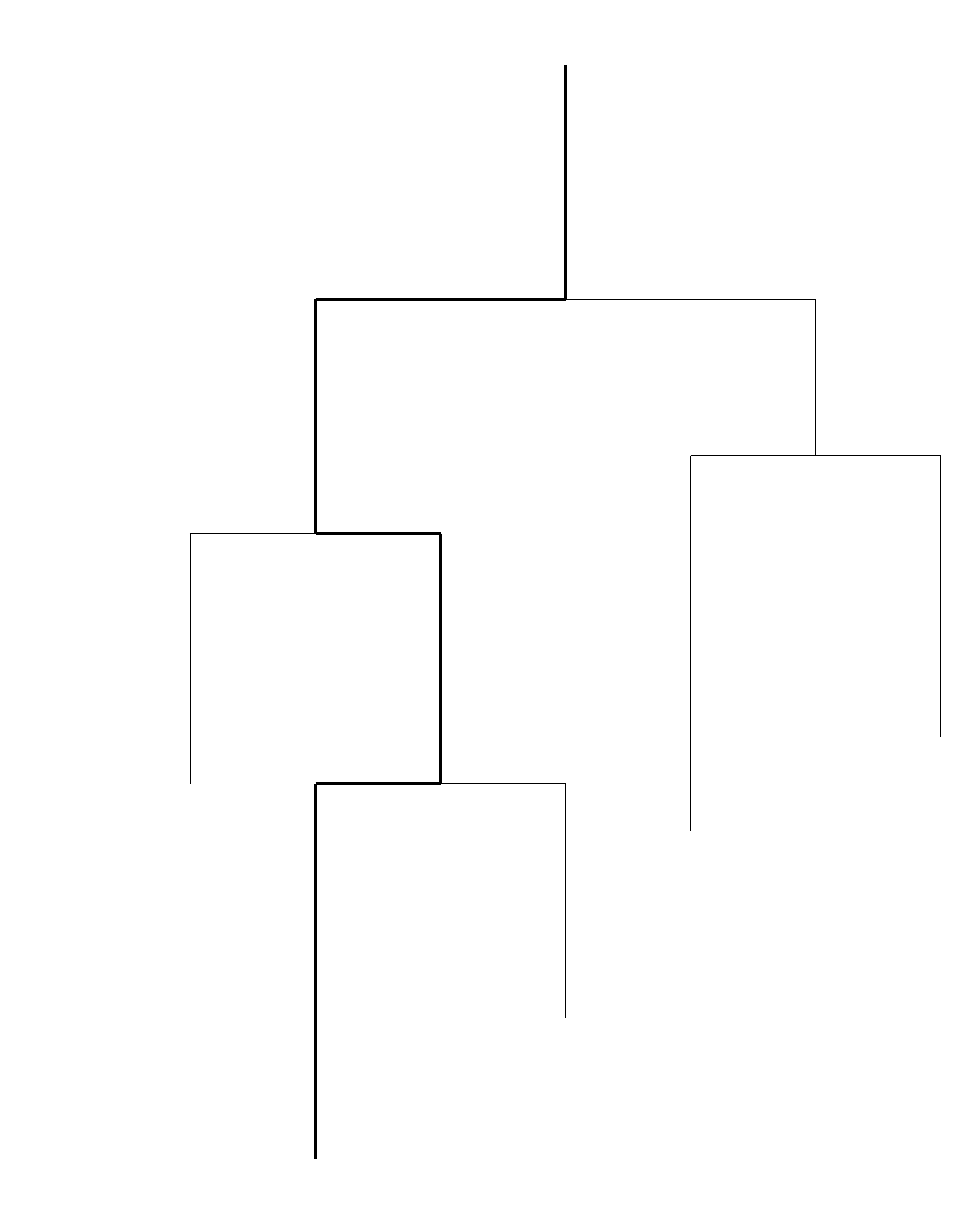}%
\end{picture}%
\setlength{\unitlength}{3947sp}%
\begingroup\makeatletter\ifx\SetFigFont\undefined%
\gdef\SetFigFont#1#2#3#4#5{%
  \reset@font\fontsize{#1}{#2pt}%
  \fontfamily{#3}\fontseries{#4}\fontshape{#5}%
  \selectfont}%
\fi\endgroup%
\begin{picture}(4680,5848)(3286,-4850)
\put(5851,839){\makebox(0,0)[lb]{\smash{{\SetFigFont{12}{14.4}{\rmdefault}{\mddefault}{\updefault}{\color[rgb]{0,0,0}$\mathbf{e}_{\nboldsymbol{\xi}}, \lambda$}%
}}}}
\put(6151,-136){\makebox(0,0)[lb]{\smash{{\SetFigFont{12}{14.4}{\rmdefault}{\mddefault}{\updefault}{\color[rgb]{0,0,0}$T_\emptyset$}%
}}}}
\put(4876,-886){\makebox(0,0)[lb]{\smash{{\SetFigFont{12}{14.4}{\rmdefault}{\mddefault}{\updefault}{\color[rgb]{0,0,0}$\lambda_1 > T_1$}%
}}}}
\put(7276,-886){\makebox(0,0)[lb]{\smash{{\SetFigFont{12}{14.4}{\rmdefault}{\mddefault}{\updefault}{\color[rgb]{0,0,0}$\lambda_2 > T_2$}%
}}}}
\put(5476,-2161){\makebox(0,0)[lb]{\smash{{\SetFigFont{12}{14.4}{\rmdefault}{\mddefault}{\updefault}{\color[rgb]{0,0,0}$\lambda_{12}>T_{12}$}%
}}}}
\put(7951,-2386){\makebox(0,0)[lb]{\smash{{\SetFigFont{12}{14.4}{\rmdefault}{\mddefault}{\updefault}{\color[rgb]{0,0,0}$\lambda_{22}<T_{22}$}%
}}}}
\put(3301,-2161){\makebox(0,0)[lb]{\smash{{\SetFigFont{12}{14.4}{\rmdefault}{\mddefault}{\updefault}{\color[rgb]{0,0,0}$\lambda_{11}<T_{11}$}%
}}}}
\put(3601,-3961){\makebox(0,0)[lb]{\smash{{\SetFigFont{12}{14.4}{\rmdefault}{\mddefault}{\updefault}{\color[rgb]{0,0,0}$\lambda_{121}>T_{121}$}%
}}}}
\put(6076,-3661){\makebox(0,0)[lb]{\smash{{\SetFigFont{12}{14.4}{\rmdefault}{\mddefault}{\updefault}{\color[rgb]{0,0,0}$\lambda_{122}<T_{122}$}%
}}}}
\put(6676,-2911){\makebox(0,0)[lb]{\smash{{\SetFigFont{12}{14.4}{\rmdefault}{\mddefault}{\updefault}{\color[rgb]{0,0,0}$\lambda_{21}<T_{21}$}%
}}}}
\put(4778,-4786){\makebox(0,0)[lb]{\smash{{\SetFigFont{12}{14.4}{\rmdefault}{\mddefault}{\updefault}{\color[rgb]{0,0,0}$\vdots$}%
}}}}
\end{picture}%
\caption{Self-similar cascade with explosion cartoon.}\label{fig2}
\end{center}
\end{figure}

Moreover, comparing the $\zetas_n$ above with $\zeta_n$ -- defined in the context of Definition \ref{expltime}
for the kernel $h_d$ -- we obtain our main result connecting the self-similar and dilogarithmic uniqueness problems.

\begin{thm}\label{thm:main}
The events $[\zeta_n(|\nboldsymbol{\xi}|) > t]$ for a dilogarithmic density
and $[\tilde{\zeta}_n(\mathbf{e}_{\nboldsymbol{\xi}}) > t |\nboldsymbol{\xi}|^2]$ have the same distribution independent on the choice of the initial wavenumber $\nboldsymbol{\xi}$ or initial direction $\mathbf{e}_{\nboldsymbol{\xi}}$, and hence the explosion time $\zeta$ from Definition \ref{expltime} for the dilogarithmic kernel, and the eventual explosion $\zetas$ from Definition \ref{selfsimilarexplosion} have the same distribution, independent of the choice of $\nboldsymbol{\xi}_0$ or $\mathbf{e}_0$.
\end{thm}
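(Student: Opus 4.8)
The plan is to establish the distributional identity at the level of the finite-generation random variables $\zeta_n$ and $\zetas_n$, and then pass to the limit. First I would compare the two definitions term by term. In Definition~\ref{expltime} (for the kernel $h_d$), the explosion time is built from the sums $\sum_{j=1}^n |\mathbf{W}_{s|j}|^{-2} T_{s|j}$, where along any fixed path $s$ the Fourier modes $\mathbf{W}_{s|j}$ form the dilogarithmic branching random walk of Proposition~\ref{dilogbrw}; in particular, writing $R_j = |\mathbf{W}_{s|j}|/|\mathbf{W}_{s|j-1}|$, the $R_j$ are i.i.d.\ with density $\mathcal{D}(r)$ and $|\mathbf{W}_{s|j}| = |\nboldsymbol{\xi}| \prod_{k=1}^j R_k$. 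In Definition~\ref{selfsimilarexplosion}, the similarity explosion horizon is built from $\sum_{j=0}^n T_{\langle s|j\rangle} \big/ \prod_{k=0}^j |\tilde{\mathbf{W}}_{\langle s|k\rangle}|^2$, and by Proposition~\ref{Prop:direction_independent} the magnitudes $\tilde{R}_j = |\tilde{\mathbf{W}}_{\langle s|j\rangle}|$ are themselves i.i.d.\ with density $\mathcal{D}(r)$ (with $\tilde{R}_0 = 1$), independent of the initial direction.

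The key observation is then a rescaling of the $j$-th summand. In the dilogarithmic cascade, $|\mathbf{W}_{s|j}|^{-2} = |\nboldsymbol{\xi}|^{-2}\prod_{k=1}^j R_k^{-2}$, so
\begin{equation*}
\sum_{j=1}^n |\mathbf{W}_{s|j}|^{-2} T_{s|j} = \frac{1}{|\nboldsymbol{\xi}|^2}\sum_{j=1}^n \Big(\prod_{k=1}^j R_k^{-2}\Big) T_{s|j}.
\end{equation*}
In the self-similar cascade, since $\tilde{R}_0 = 1$,
\begin{equation*}
\sum_{j=0}^n T_{\langle s|j\rangle}\Big/\prod_{k=0}^j \tilde{R}_k^2 = \sum_{j=0}^n \Big(\prod_{k=1}^j \tilde{R}_k^{-2}\Big) T_{\langle s|j\rangle}.
\end{equation*}
These two sums have exactly the same structure: an $(n+1)$-fold (resp.\ $n$-fold) sum indexed by the same tree $\{1,2\}^n$, with i.i.d.\ exponential weights $T$, modulated by partial products of i.i.d.\ $\mathcal{D}$-distributed ratios. (The cosmetic discrepancy in the starting index $j=0$ vs.\ $j=1$ is harmless: the $j=0$ term $T_\emptyset$ in the self-similar sum is common to every path and, after the overall factor $|\nboldsymbol{\xi}|^{-2}$ is accounted for, matches the structure; one simply checks the indexing carefully so that $\zetas_n$ corresponds to $\zeta_n$ plus possibly a single shared root term, or reindexes so the sums align precisely.) Since both the minimum over $|s|=n$ and the whole joint law depend only on the common law of $(T_s, R_s)_{s}$, we conclude
\begin{equation*}
\zeta_n(|\nboldsymbol{\xi}|) \stackrel{\mathcal{D}}{=} \frac{1}{|\nboldsymbol{\xi}|^2}\,\zetas_n(\mathbf{e}_{\nboldsymbol{\xi}}),
\end{equation*}
equivalently $[\zeta_n(|\nboldsymbol{\xi}|) > t] = [\zetas_n(\mathbf{e}_{\nboldsymbol{\xi}}) > t|\nboldsymbol{\xi}|^2]$ in distribution, independent of $\nboldsymbol{\xi}$ and $\mathbf{e}_{\nboldsymbol{\xi}}$ (using Proposition~\ref{Prop:direction_independent} for the direction-independence and Proposition~\ref{dilogbrw} for the explicit law).

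Finally I would pass to the limit: by the monotonicity remarks following both definitions, $\zeta = \lim_n \zeta_n$ and $\zetas = \lim_n \zetas_n$ almost surely (monotone limits), so the distributional identity is preserved in the limit, giving $\zeta(\nboldsymbol{\xi}_0) \stackrel{\mathcal{D}}{=} |\nboldsymbol{\xi}_0|^{-2}\zetas(\mathbf{e}_0)$, and in particular the scaled explosion events $[\zeta < \infty]$ and $A_{\mathbf{e}_0}$ coincide in probability after the time change $t \mapsto t|\nboldsymbol{\xi}_0|^2$. The main obstacle — really the only delicate point — is bookkeeping: making the index alignment between the two sums precise (the $j=0$ root term, the factor $|\nboldsymbol{\xi}|^2$, and the fact that in Definition~\ref{expltime} the root mode is $\mathbf{W}_{s|0} = \nboldsymbol{\xi}$ while in the self-similar case $|\tilde{\mathbf{W}}_{\langle s|0\rangle}| = 1$), and confirming that the joint distribution of the entire family of partial-product weights is genuinely the same in both models — this uses the independence-of-direction result of Proposition~\ref{Prop:direction_independent} in an essential way, since in the Navier-Stokes cascade the ratios depend a priori on the actual vector $\mathbf{W}_{s|j}$, not just its norm, and it is the radial symmetry of $h_d$ that collapses this down to the i.i.d.\ $\mathcal{D}$ structure.
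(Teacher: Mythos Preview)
Your proposal is correct and follows essentially the same approach as the paper's proof: both use Proposition~\ref{dilogbrw} to write $|\mathbf{W}_{s|j}|^{-2}=|\nboldsymbol{\xi}|^{-2}\prod_{k=1}^j R_k^{-2}$ with i.i.d.\ dilogarithmic ratios $R_k$, factor out $|\nboldsymbol{\xi}|^{-2}$, and then invoke Proposition~\ref{Prop:direction_independent} to identify the resulting expression with $\zetas_n$. Your treatment is in fact slightly more careful than the paper's on two points: you explicitly flag the index-alignment issue (the paper silently shifts the sum in Definition~\ref{expltime} to start at $j=0$ in its proof), and you note the passage to the limit via monotonicity, which the paper leaves implicit in the statement of the theorem.
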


\begin{proof}
Recall that when the dilogarithmic kernel is used to determine the distribution of the
branching frequencies, it follows that for any $\langle s \rangle \in \partial \mathcal{V}$ 
$$
R_k = \frac{|\mathbf{W}_{\langle s|k\rangle}|}{|\mathbf{W}_{\langle s|k-1\rangle}|},
~~k\geq 1
$$
is a sequence of iid random variables with density $\mathcal{D}(r)$.
Now, with $\mathbf{W}_{s|0} = \nboldsymbol{\xi},$ one has
\begin{eqnarray*}
\zeta_n(|\nboldsymbol{\xi}|) &=& \inf_{|s|=n}\sum_{j=0}^n|\mathbf{W}_{s|j}|^{-2}T_{s|j} \\
&=& \frac{1}{|\nboldsymbol{\xi}|^2} \inf_{|s|=n} T_{s|0}+\sum_{j=1}^n\prod_{k=1}^{j}
\frac{|\mathbf{W}_{s|k-1}|^{2}}{|\mathbf{W}_{s|k}|^{2}}T_{s|j} \\
&\stackrel{\mathcal{D}}{=}&
\frac{1}{|\nboldsymbol{\xi}|^2}\inf_{| s |=n}
\sum_{j=0}^n T_{{ s|j}} \frac{1}{\prod_{k=0}^{j} |{R}_{k}|^2}
\end{eqnarray*}
where $R_0 =1.$
Thus the we obtain the equality, in distribution, of the events
$
[\zeta_n(|\nboldsymbol{\xi}|) > t].
$
and
$
[\tilde{\zeta}_n(\mathbf{e}_{\nboldsymbol{\xi}}) > t |\nboldsymbol{\xi}|^2].
$
\end{proof}

In analogy with Proposition \ref{calderon}, one has the following

\begin{prop}
\label{integraleqn}
Let
$$
Z(\lambda_0) = 1 + \sum_{n=0}^\infty  \sum_{|v|=n} {\textbf{1}}[T_{ v } < 
\lambda_{ v }].
$$
Define $\tilde{m}(\lambda,k) = {\mathbb{P}}(Z(\lambda) = k).$ 
Let $\tilde{m}(\lambda) = \sum_{k=1}^\infty \tilde{m}(\lambda,k)$.  Then,
\begin{equation}
\label{mildequation}
\tilde{m}(\lambda) = e^{-\lambda} + 
 \int_0^{\lambda} e^{-(\lambda-s)}
\int_0^\pi\int_0^\infty \tilde{m}(r^2s) \tilde{m}((1-2r\cos\theta+r^2)s) 
\tilde{H}(\theta,r) \,d r d \theta d s.
\end{equation}
Moreover, if $\tilde{m}(\lambda) = 1$ is the unique non-negative solution
 then there is no similarity explosion.
\end{prop}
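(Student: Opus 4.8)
The plan is to derive \eqref{mildequation} by a one-step (first-branching) analysis of the self-similar cascade, exactly parallel to the proof of Proposition~\ref{prop:PsiDE}, and then to extract the non-explosion statement from the uniqueness hypothesis.

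Set $\tilde m(\lambda)=\mathbb{P}(Z(\lambda)<\infty)=\sum_{k\ge 1}\tilde m(\lambda,k)$; this series converges because the events $[Z(\lambda)=k]$ are disjoint, so $0\le\tilde m(\lambda)\le 1$, and since the root of the tree branches precisely when $T_\emptyset<\lambda_\emptyset=\lambda$ one has $\tilde m(\lambda,1)=\mathbb{P}(T_\emptyset\ge\lambda)=e^{-\lambda}$. To close an equation for $\tilde m$ I would condition on the first branching time $T_\emptyset=\sigma$ and on the split frequency $\tilde{\mathbf{W}}_1$ (so $\tilde{\mathbf{W}}_2=\mathbf{e}_0-\tilde{\mathbf{W}}_1$): on the event $\sigma<\lambda$ the two subtrees rooted at $1$ and $2$ are, conditionally on $(\sigma,\tilde{\mathbf{W}}_1)$, independent self-similar cascades with initial directions $\mathbf{e}_1,\mathbf{e}_2$ and initial horizons $\lambda_j=|\tilde{\mathbf{W}}_j|^2(\lambda-\sigma)$, $j=1,2$ -- this is just the recursive construction of the cascade together with the facts that the $T_v$ are i.i.d.\ and that each split frequency depends only on the parent direction. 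Hence $Z(\lambda)<\infty$ iff either $\sigma\ge\lambda$ or both subtrees are finite, and -- the decisive point -- by Proposition~\ref{Prop:direction_independent} and the remark following it the conditional probability that the $j$-th subtree is finite equals $\tilde m(\lambda_j)$, with \emph{no} dependence on the direction $\mathbf{e}_j$. Writing the density of $T_\emptyset$ as $e^{-\sigma}$ on $(0,\lambda)$, substituting $s=\lambda-\sigma$, and expressing the expectation over $\tilde{\mathbf{W}}_1$ in spherical coordinates around $\mathbf{e}_0$ -- with $r=|\tilde{\mathbf{W}}_1|$ and $\theta$ the polar angle, so that $|\tilde{\mathbf{W}}_2|^2=1-2r\cos\theta+r^2$ and the density $H_d(\,\cdot\,|\mathbf{e}_0)$ together with the Jacobian $2\pi r^2\sin\theta$ collapses to $\tilde H(\theta,r)=\frac{2\sin\theta}{\pi^2(1-2r\cos\theta+r^2)}$ -- produces precisely \eqref{mildequation}; interchanging the $k$-sum with the integrals is legitimate since every term is non-negative (Tonelli).

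For the closing assertion, the $\tilde m$ just constructed is a non-negative solution of \eqref{mildequation}, so if $\tilde m(\lambda)\equiv 1$ is the unique non-negative solution then $\mathbb{P}(Z(\lambda)<\infty)=1$ for every $\lambda>0$. Using \eqref{finiteN} and the identity displayed just before Definition~\ref{selfsimilarexplosion}, on the event $[\tilde{\zeta}(\mathbf{e}_0)<\lambda]$ one has $\tilde{\zeta}_n<\lambda$ for every $n$ (the $\tilde{\zeta}_n$ increase to $\tilde{\zeta}$), hence the horizon-$\lambda$ genealogy contains a branching vertex at each level and is therefore infinite, i.e.\ $Z(\lambda)=\infty$; thus $\mathbb{P}(\tilde{\zeta}(\mathbf{e}_0)<\lambda)\le\mathbb{P}(Z(\lambda)=\infty)=0$ for all $\lambda$, whence $\mathbb{P}(A_{\mathbf{e}_0})=\mathbb{P}(\bigcup_{m\ge 1}[\tilde{\zeta}(\mathbf{e}_0)<m])=0$: there is no similarity explosion.

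I expect the main obstacle to be the rigorous justification of the one-step decomposition -- verifying that, conditionally on the first-branching data, the two subtrees really are independent self-similar cascades with the stated rescaled horizons, and, above all, that their finiteness probabilities depend on the new horizons only, not on the new directions. The latter is exactly what Proposition~\ref{Prop:direction_independent}, together with the direction-independence of the horizon sequence noted after it, supplies, so the argument hinges on applying these correctly; by comparison, the spherical-coordinate reduction to $\tilde H(\theta,r)$ and the measure-theoretic interchanges are routine.
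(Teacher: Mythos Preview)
Your proposal is correct and follows essentially the same approach as the paper: a first-branching decomposition of the self-similar cascade, using the direction-independence of Proposition~\ref{Prop:direction_independent} to reduce to horizons only, followed by the azimuthal integration that produces $\tilde H(\theta,r)$. The only cosmetic difference is that the paper first writes the recursion at the level of $\tilde m(\lambda,k)$ and then sums over $k$, whereas you work directly with the event $[Z(\lambda)<\infty]$; your version of the non-explosion conclusion is also slightly more explicit than the paper's, but the content is the same.
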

\begin{proof}
Note that
$Z(\lambda_0)$ represents the number of branches of
the self-similar branching process started with horizon $\lambda_0.$ 
Recall that from the definitions of $\lambda_{s}$, $Z$ is independent of the initial
direction. 
Each time the indicator does not vanish, a branching occurs increasing the number of branches
by $1$.  The extra term is to count the initial branch.  

For $k\geq 2,$ condition on the time of the first branching to get, 
\begin{equation}
\label{eq:mlambdak}
\tilde{m}(\lambda,k) = \sum_{j=1}^{k-1} \int_0^{\lambda} e^{-(\lambda-s)}
\int \tilde{m}(|\nboldsymbol{\eta}|^2s,j) \tilde{m}(|\mathbf{e}_0-\nboldsymbol{\eta}|^2 s,k-j) 
H_{d}(\nboldsymbol{\eta}|\mathbf{e}_0) \,d \nboldsymbol{\eta} d s,
\end{equation}
where $\mathbf{e}_0$ is arbitrary.

Let $\tilde{H}(\theta,r)$ denote the average of $H_d$ with respect to the angle $\phi,$
$$
\tilde{H}(\theta,r)
=
\frac{2}{\pi^2}
\frac{\sin\theta}{1-2r\cos\theta+r^2}.
$$
Then, the independence of the above equation with respect to the direction $\mathbf{e}_0$ is better
illustrated in the following;
\begin{equation*}
\tilde{m}(\lambda,k) = \sum_{j=1}^{k-1} \int_0^{\lambda} e^{-(\lambda-s)}
\int_0^\pi\int_0^\infty \tilde{m}(r^2s,j) \tilde{m}((1-2r\cos\theta+r^2)s,k-j) 
\tilde{H}(\theta,r) \, d r d \theta d s.
\end{equation*}
  Summing on $k$ the previous equation and
adding the term corresponding to $k=1,$ one has (\ref{mildequation}).
It is clear that $\tilde{m} \equiv 1$ is a solution of this equation, so non explosion is equivalent to
showing that this is the only  non negative solution that is bounded by $1.$
\end{proof}

While we can not prove that $\tilde{m} \equiv 1$ is the only solution of 
\eqref{mildequation}, we note that the behavior at infinity can be used to 
determined if $\tilde{m}(\lambda) < 1$  on a set of positive measure.
In fact,  if for some $\epsilon >0,$
$\tilde{m}(\lambda)\leq(1-\epsilon)$ on a set $E$ of positive measure, then $\tilde{m}$ is
bounded by a decreasing function. 
Indeed, for any $\lambda>0,$ $0\leq \tilde{m}(\lambda) \leq 1,$ and from \eqref{mildequation}
one has
\begin{eqnarray*}
\tilde{m}(\lambda) &\leq& e^{-\lambda}
+ \int_0^\lambda e^{-(\lambda - s)} \int_0^\infty \tilde{m}(r^2 s)
\mathcal{D}(r) \,d r \\
&<&
\rme^{-\lambda}
+ \int_0^\lambda  \rme^{-s} \,d s
-
\epsilon \int_0^\lambda e^{-s}  \int_E \mathcal{D}(r) \,d r d s \\
&=& 1 - \epsilon \mu(E) (1-e^{-\lambda})
\end{eqnarray*}
where $\mu(E)= \int_E \mathcal{D}(r) \,d r.$

We are now ready to establish one of the main results of the paper. 
Define the finite horizon probability of explosion in a similar way as that of self similar explosion.
To be precise, let 
$$
\tilde{A}(\lambda) = \cap_{n\geq 1} [\tilde{\zeta}_n \leq \lambda].
$$
Then, $\tilde{m}(\lambda) = 1 - {\mathbb{P}}(\tilde{A}(\lambda)),$ and thus 
$$
\limsup_{\lambda\rightarrow\infty} \tilde{m}(\lambda) = \alpha<1
\iff
{\mathbb{P}}(\tilde{A}(\lambda)) > 0.
$$

We then have the following;
\begin{thm}\label{thm:0-1}
The self similar explosion event is a $0,1$ event and independent of the initial direction.
\end{thm}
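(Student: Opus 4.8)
The plan is to establish the zero-one law by a tail-triviality argument, exploiting the recursive (self-similar) structure of the cascade tree together with the direction-independence already recorded in Proposition~\ref{Prop:direction_independent}. First I would observe that the self-similar explosion event $A_{\mathbf{e}_0}$ can be decomposed according to the subtrees rooted at the two children of $\emptyset$: explosion occurs in the whole tree if and only if it occurs in at least one of the two subtrees hanging from vertices $1$ and $2$, after the appropriate time-and-scale shift dictated by $T_\emptyset$ and $|\tilde{\mathbf{W}}_{1}|, |\tilde{\mathbf{W}}_{2}|$. Concretely, writing $\zetas$ in terms of the contributions of the two subtrees, one has $\zetas(\mathbf{e}_0) = T_\emptyset + \min_{j=1,2} |\tilde{\mathbf{W}}_j|^{-2}\,\zetas^{(j)}$, where $\zetas^{(1)},\zetas^{(2)}$ are the explosion horizons of the two subtrees; since each subtree is an independent copy of the original cascade started from direction $\mathbf{e}_{j}$, and since by Proposition~\ref{Prop:direction_independent} the law of $\zetas$ does not depend on the initial direction, $\zetas^{(1)}$ and $\zetas^{(2)}$ are i.i.d.\ copies of $\zetas$, independent of $(T_\emptyset,\tilde{\mathbf{W}}_1,\tilde{\mathbf{W}}_2)$.

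Next I would let $q = \mathbb{P}(A_{\mathbf{e}_0}) = \mathbb{P}(\zetas < \infty)$, which by direction-independence is a single number not depending on $\mathbf{e}_0$. The recursion above gives $\{\zetas < \infty\} = \{\zetas^{(1)} < \infty\} \cup \{\zetas^{(2)} < \infty\}$ (the finite factors $T_\emptyset$ and $|\tilde{\mathbf{W}}_j|^{-2}$ cannot turn a finite value infinite or vice versa, since $|\tilde{\mathbf{W}}_j| \in (0,\infty)$ a.s.). By independence of the two subtree events, $q = 1-(1-q)^2 = 2q - q^2$, hence $q^2 = q$, forcing $q \in \{0,1\}$. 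This is the zero-one conclusion. The direction-independence part of the statement is then immediate, being exactly Proposition~\ref{Prop:direction_independent} (or rather its consequence that $\mathbb{P}(A_{\mathbf{e}_0})$ is the same for every $\mathbf{e}_0$).

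The step I expect to require the most care is making the subtree decomposition fully rigorous, in particular verifying that $|\tilde{\mathbf{W}}_1|$ and $|\tilde{\mathbf{W}}_2|$ are almost surely strictly positive and finite so that multiplying $\zetas^{(j)}$ by $|\tilde{\mathbf{W}}_j|^{-2}$ preserves the events $\{\cdot < \infty\}$ and $\{\cdot = \infty\}$; this follows because $\tilde{\mathbf{W}}_1$ has the absolutely continuous density $H_d(\cdot\,|\,\mathbf{e}_0)$ on $\mathbb{R}^3$, so $\{|\tilde{\mathbf{W}}_1| = 0\}$ and $\{|\tilde{\mathbf{W}}_1| = \infty\}$ are null, and likewise for $\tilde{\mathbf{W}}_2 = \mathbf{e}_0 - \tilde{\mathbf{W}}_1$. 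One should also note that the event $A_{\mathbf{e}_0} = \cup_{m\ge1}[\zetas < m] = [\zetas < \infty]$ is measurable with respect to the tree data, and that the two subtree $\sigma$-algebras are genuinely independent of each other and of the root data by construction of the i.i.d.\ families $\{T_s\}$ and $\{\tilde{\mathbf{W}}_{s1}\}$; once these measurability and independence points are in place, the algebraic identity $q = 2q-q^2$ closes the argument. Alternatively, one can phrase the same reasoning via a standard Kolmogorov-type tail argument on the sequence of generations, but the two-subtree self-similarity identity is the cleanest route.
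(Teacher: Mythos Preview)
Your argument is correct and reaches the same algebraic fixed-point equation as the paper, but via a different route. The paper does not argue directly on the tree: it invokes the integral equation \eqref{mildequation} for $\tilde m(\lambda)=\mathbb{P}(\tilde\zeta>\lambda)$ established in Proposition~\ref{integraleqn}, notes that $\tilde m$ is monotone, and passes to the limit $\lambda\to\infty$ by dominated convergence to obtain $\alpha=\alpha^2$ for $\alpha=\lim_{\lambda\to\infty}\tilde m(\lambda)=1-q$. Your approach bypasses the integral equation entirely, working directly with the recursion $\zetas=T_\emptyset+\min_{j}|\tilde{\mathbf W}_j|^{-2}\zetas^{(j)}$ and the direction-independence of Proposition~\ref{Prop:direction_independent} to get $q=2q-q^2$, which is the same identity. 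Your route is more elementary and self-contained (it needs neither Proposition~\ref{integraleqn} nor a dominated-convergence step), while the paper's route is natural in context because the integral equation is already on hand and of independent interest. One small point worth making explicit in your write-up: the independence of $\zetas^{(1)}$ and $\zetas^{(2)}$ is only \emph{conditional} on $(\tilde{\mathbf W}_1,\tilde{\mathbf W}_2)$, but since by Proposition~\ref{Prop:direction_independent} the conditional law of each $\zetas^{(j)}$ given $\mathbf e_j$ does not depend on $\mathbf e_j$, the events $\{\zetas^{(j)}<\infty\}$ are in fact unconditionally independent with common probability $q$; you allude to this, but spelling it out would make the argument airtight.
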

\begin{proof}
Note that since $\mathbb{P}(\tilde{A}(\lambda))$ is an increasing function of $\lambda$, 
$\tilde{m}(\lambda)$
is decreasing. 
Let $0\leq \alpha \leq 1$ be defined by 
$\lim_{\lambda\rightarrow\infty} \tilde{m}(\lambda) = \alpha.$ 
Using dominated convergence, one can take limit as $\lambda \rightarrow \infty$ in 
(\ref{mildequation}) to get
$
\alpha = \alpha^2,
$
so $\alpha=0$ or $1.$
The proof is completed, since
${\mathbb{P}}(A_{\mathbf{e}_0})= 1-\alpha$ independent of $\mathbf{e}_0.$ 
\end{proof}

An important consequence of this result is the following corollary.

\begin{cor}
{For any $\nboldsymbol{\xi}\neq 0,$ the explosion event for the Navier-Stokes cascades defined in terms of the dilogarithmic kernel
$H_{d}$ is a 0, 1 event.}
\end{cor}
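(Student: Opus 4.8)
The plan is to derive this directly from Theorem \ref{thm:main} together with Theorem \ref{thm:0-1}. The corollary asserts that for any $\nboldsymbol{\xi}\neq 0$, the Navier-Stokes explosion event $[\zeta(|\nboldsymbol{\xi}|)<\infty]$ (for the dilogarithmic kernel $H_d$) has probability $0$ or $1$. Theorem \ref{thm:main} established that the explosion time $\zeta$ from Definition \ref{expltime} for the dilogarithmic kernel and the eventual explosion time $\zetas$ from Definition \ref{selfsimilarexplosion} have the same distribution, independent of the choice of $\nboldsymbol{\xi}_0$ or $\mathbf{e}_0$. Consequently the two explosion events $[\zeta<\infty]=\cup_{m\geq 1}[\zeta<m]$ and $A_{\mathbf{e}_0}=\cup_{m\geq 1}[\zetas(\mathbf{e}_0)<m]$ have the same probability.

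First I would recall that, by Theorem \ref{thm:0-1}, the self similar explosion event $A_{\mathbf{e}_0}$ is a $0,1$ event, with $\mathbb{P}(A_{\mathbf{e}_0})=1-\alpha$ where $\alpha\in\{0,1\}$, and this value is independent of the initial direction $\mathbf{e}_0$. Then, invoking the distributional identity of Theorem \ref{thm:main} at the level of the limiting random variables $\zeta$ and $\zetas$ (or equivalently passing to the limit $t\to\infty$, resp.\ $m\to\infty$, in the finite-$n$ identity $[\zeta_n(|\nboldsymbol{\xi}|)>t]\eqdstn[\zetas_n(\mathbf{e}_{\nboldsymbol{\xi}})>t|\nboldsymbol{\xi}|^2]$), we get
\[
\mathbb{P}([\zeta(|\nboldsymbol{\xi}|)<\infty]) = \mathbb{P}(A_{\mathbf{e}_{\nboldsymbol{\xi}}}) = 1-\alpha \in \{0,1\}.
\]
Since $\alpha$ does not depend on $\mathbf{e}_{\nboldsymbol{\xi}}$, and hence not on $\nboldsymbol{\xi}$, this holds uniformly for every $\nboldsymbol{\xi}\neq 0$, which is exactly the assertion of the corollary.

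There is essentially no obstacle here: the corollary is a bookkeeping consequence of the two theorems just proved. The only point that deserves a line of care is the passage from the finite-generation events $[\zeta_n>t]$ to the limiting event $[\zeta<\infty]$ — one should note that by monotone convergence $\zeta_n\uparrow\zeta$ (as recorded after Definition \ref{expltime}) and $\zetas_n\uparrow\zetas$, so $[\zeta<m]=\cup_n[\zeta_n<m]$ up to null sets, and the distributional equality of the $\zeta_n$ with the appropriately scaled $\zetas_n$ therefore lifts to a distributional equality of $\zeta$ and $\zetas$ after the scaling by $|\nboldsymbol{\xi}|^2$ (which does not affect whether the variable is finite). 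Combining this with the dichotomy $\alpha\in\{0,1\}$ of Theorem \ref{thm:0-1} completes the argument.
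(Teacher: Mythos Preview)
Your proof is correct and follows essentially the same route as the paper: combine the distributional equality of $\zeta$ and $\zetas$ from Theorem~\ref{thm:main} with the $0$--$1$ dichotomy for the self-similar explosion event from Theorem~\ref{thm:0-1}. One small slip: in your closing paragraph the monotonicity $\zeta_n\uparrow\zeta$ gives $[\zeta<m]=\bigcap_n[\zeta_n<m]$ (up to the boundary $[\zeta=m]$), not $\bigcup_n$; but this is immaterial since you already correctly invoke the limiting distributional identity stated in Theorem~\ref{thm:main}.
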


\begin{proof}
The corollary follows from the equality, in distribution, of the events
$
[\zeta_n(|\nboldsymbol{\xi}|) > t]
$
and
$
[\tilde{\zeta}_n(\mathbf{e}_{\nboldsymbol{\xi}}) > t |\nboldsymbol{\xi}|^2]
$ (See Theorem \ref{thm:main}).
\end{proof}

Similarly, the integral equations \eqref{mequationns} and \eqref{mildequation} can be shown to be
equivalent in the case the Navier-Stokes cascade is defined using the dilogarithmic distribution.
\begin{prop}
\label{calculusexercise}
Let $m(|\nboldsymbol{\xi}|,t)$ be the solution of the integral equation
\begin{equation}
\label{mtotalfourier}
m(|\nboldsymbol{\xi}|,t) = {\rm{e}}^{-|\nboldsymbol{\xi}|^2t} +  |\nboldsymbol{\xi}|^2
\int_0^t {\rm{e}}^{-|\nboldsymbol{\xi}|^2(t-s)}  \int_{{\R^3}} m(|\nboldsymbol{\eta}|,s)
m(|\nboldsymbol{\xi}-\nboldsymbol{\eta}|,s) H_d(\nboldsymbol{\eta}|\nboldsymbol{\xi})
\,d \nboldsymbol{\eta}  d s.
\end{equation}
Then 
\begin{equation}
\label{mareequivalent}
\tilde{m}(\lambda) = m(|\nboldsymbol{\xi}|,\lambda/|\nboldsymbol{\xi}|^2)
\end{equation}
is a solution of
\eqref{mildequation}
Conversely, given a solution $\tilde{m}(\lambda)$ of \eqref{mildequation}, equation
\eqref{mareequivalent} defines a solution of \eqref{mtotalfourier}.
\end{prop}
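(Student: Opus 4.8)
The plan is to prove the two implications by a single explicit change of variables in \eqref{mtotalfourier}, run forwards for the first assertion and backwards for the converse. First I would fix $\nboldsymbol{\xi}\neq 0$, put $t=\lambda/|\nboldsymbol{\xi}|^2$ in \eqref{mtotalfourier}, and note that the definition $\tilde{m}(\lambda)=m(|\nboldsymbol{\xi}|,\lambda/|\nboldsymbol{\xi}|^2)$ is the same as the scaling identity $m(\rho,t)=\tilde{m}(\rho^2 t)$, so that inside the integral $m(|\nboldsymbol{\eta}|,s)=\tilde{m}(|\nboldsymbol{\eta}|^2 s)$ and $m(|\nboldsymbol{\xi}-\nboldsymbol{\eta}|,s)=\tilde{m}(|\nboldsymbol{\xi}-\nboldsymbol{\eta}|^2 s)$. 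The linear term $\mathrm{e}^{-|\nboldsymbol{\xi}|^2 t}$ becomes $\mathrm{e}^{-\lambda}$ at once. In the nonlinear term I would first rescale time, $s'=|\nboldsymbol{\xi}|^2 s$ (so $ds=ds'/|\nboldsymbol{\xi}|^2$ and the range becomes $[0,\lambda]$), which turns $\mathrm{e}^{-|\nboldsymbol{\xi}|^2(t-s)}$ into $\mathrm{e}^{-(\lambda-s')}$ and cancels the prefactor $|\nboldsymbol{\xi}|^2$ against the Jacobian $1/|\nboldsymbol{\xi}|^2$, and then rescale space, $\nboldsymbol{\eta}=|\nboldsymbol{\xi}|\nboldsymbol{\zeta}$. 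For the latter the key fact is the homogeneity $H_d(|\nboldsymbol{\xi}|\nboldsymbol{\zeta}\,|\,\nboldsymbol{\xi})=|\nboldsymbol{\xi}|^{-3}H_d(\nboldsymbol{\zeta}\,|\,\mathbf{e}_{\nboldsymbol{\xi}})$, which cancels exactly against $d\nboldsymbol{\eta}=|\nboldsymbol{\xi}|^3\,d\nboldsymbol{\zeta}$, while simultaneously $|\nboldsymbol{\eta}|^2 s=|\nboldsymbol{\zeta}|^2 s'$ and $|\nboldsymbol{\xi}-\nboldsymbol{\eta}|^2 s=|\mathbf{e}_{\nboldsymbol{\xi}}-\nboldsymbol{\zeta}|^2 s'$, so the two copies of $\tilde{m}$ acquire precisely the arguments of \eqref{mildequation}.

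After these substitutions one is left with
\[
\tilde{m}(\lambda)=\mathrm{e}^{-\lambda}+\int_0^\lambda \mathrm{e}^{-(\lambda-s')}\int_{\R^3}\tilde{m}(|\nboldsymbol{\zeta}|^2 s')\,\tilde{m}(|\mathbf{e}_{\nboldsymbol{\xi}}-\nboldsymbol{\zeta}|^2 s')\,H_d(\nboldsymbol{\zeta}\,|\,\mathbf{e}_{\nboldsymbol{\xi}})\,d\nboldsymbol{\zeta}\,ds',
\]
which is already manifestly independent of $\nboldsymbol{\xi}$ (in particular this shows $\tilde{m}$ is well defined). I would then pass to spherical coordinates for $\nboldsymbol{\zeta}$ with polar axis $\mathbf{e}_{\nboldsymbol{\xi}}$, writing $r=|\nboldsymbol{\zeta}|$ and $\theta$ for the polar angle, so $|\mathbf{e}_{\nboldsymbol{\xi}}-\nboldsymbol{\zeta}|^2=1-2r\cos\theta+r^2$, and integrate out the azimuthal angle against the cylindrically symmetric $H_d$ exactly as in the proof of Proposition~\ref{dilogbrw}; this produces the averaged kernel $\tilde{H}(\theta,r)=\tfrac{2}{\pi^2}\tfrac{\sin\theta}{1-2r\cos\theta+r^2}$ and yields \eqref{mildequation}. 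For the converse I would start from a solution $\tilde{m}$ of \eqref{mildequation}, define $m(\rho,t):=\tilde{m}(\rho^2 t)$ (this is \eqref{mareequivalent}), and run the identical chain of substitutions in reverse: reinstate the azimuthal integration to go from $\tilde{H}(\theta,r)$ back to $H_d(\nboldsymbol{\zeta}\,|\,\mathbf{e}_{\nboldsymbol{\xi}})$, then set $\nboldsymbol{\zeta}=\nboldsymbol{\eta}/|\nboldsymbol{\xi}|$ and $s=s'/|\nboldsymbol{\xi}|^2$ to recover \eqref{mtotalfourier}. The interchange of the $dr\,d\theta$ and $ds$ integrations required at the angular step is legitimate by Tonelli's theorem, since every integrand in sight is non-negative.

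There is no genuine analytic obstacle here — the name \emph{calculus exercise} is accurate — so the only thing to watch is the bookkeeping: one must check that the three scaling factors, namely the prefactor $|\nboldsymbol{\xi}|^2$, the time Jacobian $|\nboldsymbol{\xi}|^{-2}$, and the pair consisting of $|\nboldsymbol{\xi}|^3$ from $d\nboldsymbol{\eta}$ against $|\nboldsymbol{\xi}|^{-3}$ from the homogeneity of $H_d$, cancel exactly, and that the arguments $|\nboldsymbol{\eta}|^2 s$ and $|\nboldsymbol{\xi}-\nboldsymbol{\eta}|^2 s$ transform correctly under the combined space--time rescaling. Once these powers of $|\nboldsymbol{\xi}|$ are tracked, the equivalence of \eqref{mtotalfourier} and \eqref{mildequation} is immediate.
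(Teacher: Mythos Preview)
Your proposal is correct and follows essentially the same route as the paper: the paper also substitutes $\nboldsymbol{\eta}=|\nboldsymbol{\xi}|\nboldsymbol{\eta}'$, $s'=|\nboldsymbol{\xi}|^2 s$, invokes the homogeneity $H_d(\nboldsymbol{\eta}|\nboldsymbol{\xi})\,d\nboldsymbol{\eta}=H_d(\nboldsymbol{\eta}'|\mathbf{e}_{\nboldsymbol{\xi}})\,d\nboldsymbol{\eta}'$, and then integrates out the angular variables to reach \eqref{mildequation}, with the converse obtained by reversing the steps. Your write-up is in fact more explicit than the paper's about the bookkeeping of powers of $|\nboldsymbol{\xi}|$ and about why the resulting $\tilde{m}$ is independent of $\nboldsymbol{\xi}$.
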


\begin{proof}
Introduce new variables $\nboldsymbol{\eta}=|\nboldsymbol{\xi}|\nboldsymbol{\eta}', s'=|\nboldsymbol{\xi}|^2 s,$ and recall that
$H_d(\nboldsymbol{\eta}|\nboldsymbol{\xi})
\,d\nboldsymbol{\eta}=H_d(\nboldsymbol{\eta}'|{\rm{e}}_{\nboldsymbol{\xi}})
\,d\nboldsymbol{\eta'}$.  Then changing variables in \eqref{mtotalfourier}, and
with $\lambda=|\nboldsymbol{\xi}|^2 t,$ one has, 
$$
m(|\nboldsymbol{\xi}|,\lambda/|\nboldsymbol{\xi}|^2) =
{\rm{e}}^{-\lambda} +
\int_0^{\lambda}
{\rm{e}}^{-(\lambda-s')}
\int_{\R^3} 
m(|\nboldsymbol{\xi}||\nboldsymbol{\eta}'|,s'/|\nboldsymbol{\xi}|^2) 
m(|\nboldsymbol{\xi}||{\rm{e}}_{\nboldsymbol{\xi}}-\nboldsymbol{\eta}'|,s'/|\nboldsymbol{\xi}|^2) 
 H_d(\nboldsymbol{\eta}'|{\rm{e}}_{\nboldsymbol{\xi}})
\,d\nboldsymbol{\eta'}  d s'.
$$
With $\tilde{m}(\lambda)$ as defined in \eqref{mareequivalent}, one has, dropping primes,
$$
\tilde{m}(\lambda) =
{\rm{e}}^{-\lambda} +
\int_0^{\lambda}
{\rm{e}}^{(\lambda-s)}
\int_{\R^3} 
\tilde{m}(s |\nboldsymbol{\eta}|^2) 
\tilde{m}(s |{\rm{e}}_{\nboldsymbol{\xi}}-\nboldsymbol{\eta}|^2) 
 H_d(\nboldsymbol{\eta}|{\rm{e}}_{\nboldsymbol{\xi}})
\,d\nboldsymbol{\eta}  d s.
$$
The proof is completed by noting that \eqref{mildequation} is obtained 
from this equation
by integrating the angular variables and, to obtain the converse, reversing the steps.
\end{proof}

\subsection{Self Similar cascades and Leray equation}

In this subsection we show that the self similar stochastic cascade can be
obtained directly from the Leray forward equations \eqref{Lerayeqtn}

\begin{prop}\label{prop:Leray_eq}
Let $\mathbf{U}(X)$ be a solution of the Leray equation \eqref{Lerayeqtn}, $\hat{\mathbf{U}}$ denote its
Fourier transform.  Then, with $\mathbf{e}_{\nboldsymbol{\xi}}$ a unit vector in $\R^3$ and 
$\lambda>0,$
$$
\mathbf{u}(\mathbf{e}_{\nboldsymbol{\xi}},\lambda) = \lambda\hat{\mathbf{U}}(\sqrt{\lambda} \mathbf{e}_{\nboldsymbol{\xi}}).
$$
satisfies \eqref{mildssns}.  In particular $\mathbf{u}(\mathbf{e}_{\nboldsymbol{\xi}},0) =
\hat{\mathbf{u}}_0(\mathbf{e}_{\nboldsymbol{\xi}})$
\end{prop}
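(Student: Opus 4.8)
The plan is to verify directly that $\mathbf{u}(\mathbf{e}_{\nboldsymbol{\xi}},\lambda) = \lambda\hat{\mathbf{U}}(\sqrt{\lambda}\,\mathbf{e}_{\nboldsymbol{\xi}})$ solves the mild self-similar equation \eqref{mildssns}, using the Leray equation \eqref{Lerayeqtn} as the starting point. First I would take the Fourier transform of \eqref{Lerayeqtn}. Under $\mathcal{F}$ the term $-\Delta\mathbf{U}$ becomes $|\nboldsymbol{\xi}|^2\hat{\mathbf{U}}$, the term $-\tfrac12\mathbf{U}$ becomes $-\tfrac12\hat{\mathbf{U}}$, and the dilation term $-\tfrac12(X\cdot\nabla)\mathbf{U}$ transforms (integration by parts in the Fourier integral) into $\tfrac12(\nboldsymbol{\xi}\cdot\nabla_{\nboldsymbol{\xi}})\hat{\mathbf{U}} + \tfrac32\hat{\mathbf{U}}$, so the two lower-order terms combine to $(\hat{\mathbf{U}} + \tfrac12\nboldsymbol{\xi}\cdot\nabla_{\nboldsymbol{\xi}}\hat{\mathbf{U}})$. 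The nonlinear term $(\mathbf{U}\cdot\nabla)\mathbf{U}$, after imposing $\nabla\cdot\mathbf{U}=0$ and projecting out the pressure with $\pi_{\nboldsymbol{\xi}^\perp}$, becomes exactly the $\odot_{\nboldsymbol{\xi}}$-convolution $|\nboldsymbol{\xi}|\int \hat{\mathbf{U}}(\nboldsymbol{\eta})\odot_{\nboldsymbol{\xi}}\hat{\mathbf{U}}(\nboldsymbol{\xi}-\nboldsymbol{\eta})\,d\nboldsymbol{\eta}$ up to the $(2\pi)^{-3/2}$ constant, precisely as in the derivation of \eqref{mildNSE}. Thus $\hat{\mathbf{U}}$ satisfies a stationary equation of the form
\[
|\nboldsymbol{\xi}|^2\hat{\mathbf{U}} + \hat{\mathbf{U}} + \tfrac12\nboldsymbol{\xi}\cdot\nabla_{\nboldsymbol{\xi}}\hat{\mathbf{U}} = (2\pi)^{-3/2}|\nboldsymbol{\xi}|\!\int \hat{\mathbf{U}}(\nboldsymbol{\eta})\odot_{\nboldsymbol{\xi}}\hat{\mathbf{U}}(\nboldsymbol{\xi}-\nboldsymbol{\eta})\,d\nboldsymbol{\eta}.
\]

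Next I would introduce $\mathbf{u}(\nboldsymbol{\xi},\lambda) := \lambda\hat{\mathbf{U}}(\sqrt{\lambda}\,\nboldsymbol{\xi})$ as a function on all of $\R^3\times(0,\infty)$ (not just on unit vectors) and check it solves the time-dependent PDE whose Duhamel form is \eqref{mildssns}. Compute $\partial_\lambda\mathbf{u} = \hat{\mathbf{U}}(\sqrt{\lambda}\,\nboldsymbol{\xi}) + \tfrac12\sqrt{\lambda}\,(\nboldsymbol{\xi}\cdot\nabla\hat{\mathbf{U}})(\sqrt{\lambda}\,\nboldsymbol{\xi})$; multiplying the stationary equation above (evaluated at $\sqrt{\lambda}\,\nboldsymbol{\xi}$) by $\lambda$ and using the homogeneity of $\odot$ and the quadratic scaling of the convolution in $\mathbf{u}$, one obtains
\[
\partial_\lambda\mathbf{u}(\nboldsymbol{\xi},\lambda) = -|\nboldsymbol{\xi}|^2\mathbf{u}(\nboldsymbol{\xi},\lambda) + (2\pi)^{-3/2}|\nboldsymbol{\xi}|\!\int \mathbf{u}(\nboldsymbol{\eta},\lambda)\odot_{\nboldsymbol{\xi}}\mathbf{u}(\nboldsymbol{\xi}-\nboldsymbol{\eta},\lambda)\,\tfrac{d\nboldsymbol{\eta}}{(\text{scaling factors})},
\]
which is the differential form of \eqref{mildssns}; integrating against $e^{-|\nboldsymbol{\xi}|^2(\lambda-s)}$ and tracking how the change of variables $\nboldsymbol{\eta}=|\nboldsymbol{\xi}|\nboldsymbol{\eta}'$, $s'=|\nboldsymbol{\xi}|^2 s$ converts the kernel into $H_d(\nboldsymbol{\eta}|\mathbf{e}_{\nboldsymbol{\xi}})$ with constant $(\pi/2)^{3/2}$ recovers \eqref{mildssns} exactly on restricting $\nboldsymbol{\xi}$ to unit vectors $\mathbf{e}_{\nboldsymbol{\xi}}$. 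The initial-data claim is immediate: $\mathbf{u}(\mathbf{e}_{\nboldsymbol{\xi}},0) = 0\cdot\hat{\mathbf{U}}(\mathbf{0})$ in the limiting sense, but more precisely one reads off from \eqref{mildssns} at $\lambda=0$ that the forcing-free mild solution has $\mathbf{u}(\mathbf{e}_{\nboldsymbol{\xi}},0^+) = \hat{\mathbf{u}}_0(\mathbf{e}_{\nboldsymbol{\xi}})$, so one identifies $\hat{\mathbf{u}}_0(\mathbf{e}_{\nboldsymbol{\xi}})$ with the appropriate boundary value of $\lambda\hat{\mathbf{U}}(\sqrt\lambda\mathbf{e}_{\nboldsymbol{\xi}})$ as $\lambda\to 0$, consistent with $\hat{\mathbf{U}}$ being homogeneous of degree $-2$ (so that $\hat{\mathbf{u}}_0$ is homogeneous of degree $-2$ and $\mathbf{u}_0$ is $(-1)$-homogeneous, matching the Leray scaling).

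The main obstacle I expect is bookkeeping rather than conceptual: correctly transforming the dilation operator $(X\cdot\nabla)$ under the Fourier transform (getting the $+\tfrac32$ shift and signs right), and then carefully tracking the cascade of scaling substitutions — $\nboldsymbol{\eta}=|\nboldsymbol{\xi}|\nboldsymbol{\eta}'$, $s'=|\nboldsymbol{\xi}|^2 s$, $\lambda=|\nboldsymbol{\xi}|^2 t$ — so that the powers of $|\nboldsymbol{\xi}|$ cancel and the kernel $h_d(\nboldsymbol{\eta})h_d(\nboldsymbol{\xi}-\nboldsymbol{\eta})/(h_d*h_d)(\nboldsymbol{\xi})$ collapses to $H_d(\nboldsymbol{\eta}|\mathbf{e}_{\nboldsymbol{\xi}})$ with exactly the constant $(\pi/2)^{3/2}$ appearing in \eqref{mildssns}. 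A secondary subtlety is the role of the pressure: one must verify that the Helmholtz/Leray projection $\pi_{\nboldsymbol{\xi}^\perp}$ applied to the Fourier-transformed nonlinearity produces precisely the operation $\odot_{\nboldsymbol{\xi}}$ of \eqref{algebraicop0}, which is the same computation already carried out in Section~\ref{sec2} for \eqref{mildNSE} and can be invoked essentially verbatim here. Finally one should note that the equivalence is at the level of the mild formulations, so any regularity/decay hypotheses under which \eqref{Lerayeqtn} is posed are inherited by the mild self-similar equation, and conversely a mild solution of \eqref{mildssns} with enough decay yields, via $\hat{\mathbf{U}}(\nboldsymbol{\xi}) = |\nboldsymbol{\xi}|^{-2}\mathbf{u}(\mathbf{e}_{\nboldsymbol{\xi}},|\nboldsymbol{\xi}|^2)$, a solution of the Leray equation.
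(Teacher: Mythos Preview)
Your proposal is correct and follows essentially the same route as the paper. Both arguments begin by taking the Fourier transform of the Leray equation and projecting onto divergence-free fields to obtain $(1+|\nboldsymbol{\xi}|^2)\hat{\mathbf{U}} + \tfrac12(\nboldsymbol{\xi}\cdot\nabla_{\nboldsymbol{\xi}})\hat{\mathbf{U}} + (2\pi)^{-3/2}|\nboldsymbol{\xi}|\int\hat{\mathbf{U}}\odot_{\nboldsymbol{\xi}}\hat{\mathbf{U}}\,d\nboldsymbol{\eta}=0$, and then convert the radial dilation term into a time-like derivative. The only organizational difference is that the paper passes through the radial variable $r=|\nboldsymbol{\xi}|$, writes the equation as an ODE in $r$, applies the integrating factor $2re^{r^2}$, integrates, and only at the end sets $\lambda=r^2$; you instead substitute $\mathbf{u}(\nboldsymbol{\xi},\lambda)=\lambda\hat{\mathbf{U}}(\sqrt{\lambda}\,\nboldsymbol{\xi})$ from the start, differentiate in $\lambda$, and invoke Duhamel. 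The paper's treatment of the initial condition is slightly cleaner (it uses $\hat{\mathbf{u}}(t,\mathbf{e}_{\nboldsymbol{\xi}})=t\hat{\mathbf{U}}(\sqrt{t}\,\mathbf{e}_{\nboldsymbol{\xi}})$ and lets $t\to 0$ directly), whereas your discussion of the $\lambda\to 0$ limit is a bit roundabout, but the content is the same.
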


\begin{proof}
Recall that the forward Leray equations are obtained assuming a solution of the Navier-Stokes equations
of the form
$$
\mathbf{u}(\mathbf{x},t) = \frac{1}{\sqrt{t}} \mathbf{U}(\mathbf{x}/\sqrt{t}),
$$
and are given by
\begin{equation}
\label{Learyeqtn}
-\Delta \mathbf{U} - \frac{1}{2} \mathbf{U} - \frac{1}{2}(\mathbf{X}\cdot\nabla) \mathbf{U} + (\mathbf{U} \cdot \nabla) \mathbf{U} = - \nabla P,
~~~~
\nabla\cdot \mathbf{U} =  0.
\end{equation}

Taking Fourier transform and projecting on divergence free vector fields, one gets
$$
(1+|\nboldsymbol{\xi}|^2) \hat{\mathbf{U}} + \frac{1}{2} (\nboldsymbol{\xi}\cdot\nabla)\hat{\mathbf{U}} + (2\pi)^{-3/2} |\nboldsymbol{\xi}|
\int_{\R^3} \hat{\mathbf{U}}(\nboldsymbol{\xi}-\nboldsymbol{\eta}) \odot_{\nboldsymbol{\xi}} \hat{\mathbf{U}}(\nboldsymbol{\eta}) \,d\nboldsymbol{\eta} = 0
$$
Let $\mathbf{e}_{\nboldsymbol{\xi}} = \nboldsymbol{\xi}/|\nboldsymbol{\xi}|$ and define $\mathbf{V}(\mathbf{e}_{\nboldsymbol{\xi}},r) = \hat{\mathbf{U}}(r \mathbf{e}_{\nboldsymbol{\xi}}).$  Since 
$$
\nboldsymbol{\xi}\cdot\nabla \hat{\mathbf{U}} = r \frac{d \mathbf{V}}{d r},
$$
one has, with some abuse of notation,
$$
(1+r^2) \mathbf{V}+ \frac{1}{2}  r \frac{d \mathbf{V}}{d r} + (2\pi)^{-3/2} r
\int _{\R^3}\hat{U}(\nboldsymbol{\xi}-\nboldsymbol{\eta}) \odot_{\nboldsymbol{\xi}} \hat{\mathbf{U}}(\nboldsymbol{\eta}) \,d\nboldsymbol{\eta} = 0.
$$
Multiplying the equation by $2 r e^{r^2},$ one obtains
$$
\frac{d}{d r} (r^2 e^{r^2} \mathbf{V}) =
-(2\pi)^{-3/2} 2r^2 e^{r^2}
\int _{\R^3}\hat{\mathbf{U}}(\nboldsymbol{\xi}-\nboldsymbol{\eta}) \odot_{\nboldsymbol{\xi}} \hat{\mathbf{U}}(\nboldsymbol{\eta}) \,d\nboldsymbol{\eta}.
$$
Let $\tilde{\mathbf{V}}(\mathbf{e},r) = r^2 \mathbf{V}(\mathbf{e},r).$  Then
\begin{equation}
\label{auxiliar}
\frac{d}{d r} (e^{r^2} \tilde{\mathbf{V}}) = 
-(2\pi)^{-3/2} 2r e^{r^2}
\int _{\R^3}\tilde{\mathbf{V}}(\mathbf{e}_{r\mathbf{e}-\nboldsymbol{\eta}},|r\mathbf{e}-\nboldsymbol{\eta}|) \odot_{\nboldsymbol{\xi}} \tilde{\mathbf{V}}(\mathbf{e}_{\nboldsymbol{\eta}},|\nboldsymbol{\eta}|) \frac{r}{|r\mathbf{e}-\nboldsymbol{\eta}|^2|\nboldsymbol{\eta}|^2}\,d\nboldsymbol{\eta}.
\end{equation}
Note that one factor of $r$ is used to get, up to a constant, $H_{d}(\nboldsymbol{\eta}|r\mathbf{e}).$  

One may easily check that 
$$
\lim_{r\rightarrow 0} \tilde{ \mathbf{V}}( \mathbf{e},r) = \hat{\mathbf{u}}_0(\mathbf{e}).
$$
Indeed,  since
$\hu(t,\nboldsymbol{\xi})=t\hU(\sqrt{t}\nboldsymbol{\xi})$,  for $\nboldsymbol{\xi}=\exi$ we have:
\[\hu_0(\exi)=\lim\limits_{t\to 0}\hu(t,\exi)=\lim\limits_{t\to 0}t\,\hU(\sqrt{t},\xi)=\lim\limits_{t\to 0}t\mathbf{V}(\exi,\sqrt{t})=\lim\limits_{t\to 0}\tilde{\mathbf{V}}(\exi,\sqrt{t}).
\]

Integrating equation \eqref{auxiliar}, and accounting for the constant to get $H_d$, we obtain
$$
e^{r^2} \tilde{\mathbf{V}}(\mathbf{e},r) = \hat{\mathbf{u}}_0(\mathbf{e}) - (\pi/2)^{3/2}
\int_0^r 2s e^{s^2} 
\int_{\R^3} \tilde{\mathbf{V}}(\mathbf{e}_{s\mathbf{e}-\nboldsymbol{\eta}},|s\mathbf{e}-\nboldsymbol{\eta}|) \odot_{\nboldsymbol{\xi}} \tilde{\mathbf{V}}(\mathbf{e}_{\nboldsymbol{\eta}},|\nboldsymbol{\eta}|) 
H_d(\nboldsymbol{\eta}|s\mathbf{e}) \,d\nboldsymbol{\eta} \,d s.
$$
With the change of variables $\nboldsymbol{\eta} = s\nboldsymbol{\eta}',$ and noting that $\mathbf{e}_{s\mathbf{e}-\nboldsymbol{\eta}}
=\mathbf{e}_{\mathbf{e}-\nboldsymbol{\eta}'}$ and that 
$H_d(\nboldsymbol{\eta}|s\mathbf{e}) \,d \nboldsymbol{\eta} = 
H_d(\nboldsymbol{\eta}'|\mathbf{e}) \,d\nboldsymbol{\eta}',$
we have, dropping primes
$$
e^{r^2} \tilde{\mathbf{V}}(\mathbf{e},r) = \hat{\mathbf{u}}_0(\mathbf{e}) - (\pi/2)^{3/2}
\int_0^r 2s e^{s^2} 
\int_{\R^3} \tilde{\mathbf{V}}(\mathbf{e}_{\mathbf{e}-\nboldsymbol{\eta}},s|\mathbf{e}-\nboldsymbol{\eta}|) \odot_{\nboldsymbol{\xi}} \tilde{\mathbf{V}}(\mathbf{e}_{\nboldsymbol{\eta}},s|\nboldsymbol{\eta}|) 
H_d(\nboldsymbol{\eta}|\mathbf{e}) \,d\nboldsymbol{\eta} \,d s.
$$
Let $t=s^2$ to get
$$
\tilde{\mathbf{V}}(\mathbf{e},r) = e^{-r^2} \hat{\mathbf{u}}_0(\mathbf{e}) - (\pi/2)^{3/2}
\int_0^{r^2}  e^{-(r^2-t)} 
\int _{\R^3}\tilde{\mathbf{V}}(\mathbf{e}_{\mathbf{e}-\nboldsymbol{\eta}},\sqrt{t}|\mathbf{e}-\nboldsymbol{\eta}|) \odot_{\nboldsymbol{\xi}} \tilde{\mathbf{V}}
(\mathbf{e}_{\nboldsymbol{\eta}},\sqrt{t}|\nboldsymbol{\eta}|) 
H_d(\nboldsymbol{\eta}|\mathbf{e}) \,d\nboldsymbol{\eta} \,d t.
$$

The proof is completed setting $\lambda = r^2$ and defining $\mathbf{u}(\mathbf{e},r^2) = 
\tilde{\mathbf{V}}(\mathbf{e},r)$.

\end{proof}


\begin{remark}  As an aside, one may note that the choice of the scaling
parameter $r$ is completely arbitrary.
Corresponding to the choices $r = 1/\sqrt{t}$ made by Leray, and
say, $r = 1/|\mathbf{x}|$, respectively, let 
$\mathbf{u}_1(\mathbf{x},t)= (1/\sqrt{t}) \mathbf{U}(\mathbf{x}/\sqrt{t}),$  and
 $\mathbf{u}_2(x,t) = (1/|\mathbf{x}|) \mathbf{V}(\mathbf{x}/|\mathbf{x}|, t/|\mathbf{x}|^2)$
Let's note that $\mathbf{U}$ and $\mathbf{V}$ can be related by an application of the Kelvin transform
${\mathcal{T}}_1$
with respect to the unit sphere in $\R^3.$   To see this, recall that 
$\mathcal{T}_a [\mathbf{u}(\mathbf{y})] \equiv (a/|\mathbf{y}|) \mathbf{u}((a^2/|\mathbf{y}|^2) \mathbf{y}),$
defines the Kelvin transform of $\mathbf{u}$ 
with respects to the sphere of radius $a$. Now, letting $\mathbf{X}=\mathbf{x}/\sqrt{t}$
one has
\begin{eqnarray*}
\mathcal{T}_1 [(1/\sqrt{t}) \mathbf{U}(\mathbf{X})] &=&
\frac{1}{\sqrt{t}|\mathbf{X}|} \mathbf{U}(\mathbf{X}/|\mathbf{X}|^2) =\\
\frac{1}{|\mathbf{x}|} \mathbf{U}(\mathbf{x}\sqrt{t}/|\mathbf{x}|^2) &=&
\frac{1}{|\mathbf{x}|} \tilde{\mathbf{U}}(\mathbf{x}/|\mathbf{x}|,\sqrt{t}/|\mathbf{x}|) 
\equiv \frac{1}{|\mathbf{x}|} \mathbf{V}(\mathbf{x}/|\mathbf{x}|,t/|\mathbf{x}|^2).
\end{eqnarray*}
\end{remark}

\section{Conclusions and Further Directions}\label{sec4}

The primary goal of this article was to
precisely formulate a notion of symmetry breaking for the
three-dimensional incompressible Navier-Stokes equations,
and to provide an approach to the resulting symmetry 
breaking vs or not dichotomy.  The notion
that is introduced builds on a variant of classic scaling and
self-similarity ideas of Leray \cite{JL1934}. Namely, symmetry 
breaking is defined as a phenomena in which one has
uniqueness of self-similar solutions, but non-uniqueness
of general solutions.  The approach is derived from
a stochastic cascade representation (NSC) of the Navier-Stokes  equations 
introduced by Le Jan an Sznitman \cite{YLAS1997}, together with a
corresponding
development of a cascade representation (SSC) of (mild)
self-similar solutions.  
The essence of the approach is to exploit a notion of
branching process explosion as a surrogate to non-uniqueness.
A branching random walk cascade,
namely the binary branching dilogarithmic random walk on $(0,\infty)$
viewed as a multiplicative group, is obtained as a common
element of both representations for comparison.
A principle result was the equivalence of the explosion
phenomena for (NSC) and (SSC).  In addition it is shown
that the explosion criteria is critical in the sense of scaling,
and a zero-one law is established for the explosion event.

It remains to firm up the precise connection between explosion and non-uniqueness.  A related semilinear 
pseudo-differential equation of Proposition \ref{prop:PsiDE} and an integral equation of Proposition \ref{integraleqn}
can be associated with the branching numbers in such
a way that uniqueness of solutions to either in an appropriate
space is shown to be equivalent to non-explosion.  In fact,
although not obvious, as shown by Proposition \ref{calculusexercise},
 the two equations are equivalent.
However
the yet unproven connection between explosion criteria and
uniqueness is expected to be that 
non-explosion corresponds to the uniqueness of mild
solutions represented by (NSC) and (SSC), respectively.
Proving this in appropriate function spaces is a substantial challenge to the overall approach.   
Assuming that this will be achievable, the surrogate results will prove that the equations are in fact not symmetry breaking.

\section{Appendix: Bessel and Dilogarithmic Markov Chains \& Explosion}\label{appendix}

This appendix records some general approaches to the explosion problem that may eventually prove useful as
we learn more about the dilogarithmic branching random
walk.  In fact,  we are able to demonstrate their effectiveness
when applied to the simpler case of the Bessel kernel, which
we show to be non-explosive.  At a heuristic level, it is the
mean reverting property (\ref{meanreversion}) that makes
the Bessel kernel simpler to analyze. 

The first approach to explosion exploits the monotonicity in the sequence $\{\zeta_n\}.$

\begin{prop}
\label{prop:monotonicity}
Let $\zeta$ be as in Definition \ref{expltime}, 
and assume that for some $\lambda>0,$
$$2^n\mathbb{E}_{|\nboldsymbol{\xi}|}\prod_{j=1}^n\frac{\lambda}{\lambda + |\mathbf{W}_j|^2} 
\to 0, \quad \text{as }\ n\to\infty.$$
Then
$
\mathbb{P}([\zeta = \infty]) = 0.
$
\end{prop}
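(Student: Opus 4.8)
The plan is to turn the hypothesis into an almost-sure bound $\zeta\le\lambda$ by exploiting the monotonicity $\zeta_n\uparrow\zeta$ recorded just after Definition \ref{expltime}. The key elementary observation is that, since $\zeta_n$ is nondecreasing in $n$, the events $[\zeta_n>\lambda]$ increase with $n$, so $\mathbb{P}(\zeta_n>\lambda)$ is a nondecreasing sequence with $\mathbb{P}(\zeta_n>\lambda)\uparrow\mathbb{P}(\zeta>\lambda)$. Hence if I can dominate $\mathbb{P}(\zeta_n>\lambda)$ by a sequence tending to $0$, then $\mathbb{P}(\zeta_n>\lambda)\le\liminf_{m\ge n}(\text{that sequence})=0$ for \emph{every} $n$, whence $\mathbb{P}(\zeta>\lambda)=0$, so $\zeta\le\lambda<\infty$ almost surely and therefore $\mathbb{P}([\zeta=\infty])=0$. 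Thus the whole proposition reduces to establishing the single estimate
\[ \mathbb{P}(\zeta_n>\lambda)\le 2^n\,\mathbb{E}_{|\nboldsymbol{\xi}|}\prod_{j=1}^n\frac{\lambda}{\lambda+|\mathbf{W}_j|^2}. \]

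To produce this estimate I would write the event $[\zeta_n>\lambda]$ out explicitly: it is exactly the event that every one of the $2^n$ genealogical paths $s$ of length $n$ has accumulated cost $\sum_{j=1}^n|\mathbf{W}_{s|j}|^{-2}T_{s|j}>\lambda$. A first-moment (union) estimate over the $2^n$ paths supplies the combinatorial factor $2^n$, while conditioning on the first branching and using the conditional independence of the two descendant subtrees sets up a recursion in $n$. Conditionally on the wavenumbers, each edge increment $|\mathbf{W}_j|^{-2}T_{s|j}$ is exponential with rate $|\mathbf{W}_j|^2$, and the rational factor $\tfrac{\lambda}{\lambda+|\mathbf{W}_j|^2}$ is precisely the probability $\mathbb{P}\big(\mathrm{Exp}(|\mathbf{W}_j|^2)>\mathrm{Exp}(\lambda)\big)$ that such an increment exceeds an independent exponential clock of rate $\lambda$, i.e.\ the elementary identity $\mathbb{P}(\mathrm{Exp}(a)>\mathrm{Exp}(b))=b/(a+b)$. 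This is what lets each generation contribute one factor $\tfrac{\lambda}{\lambda+|\mathbf{W}_j|^2}$ and, upon unfolding the recursion over the tree, yields the displayed product summed over the $2^n$ paths.

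The step I expect to be the main obstacle is closing the recursion with a \emph{fixed} budget $\lambda$: convolving the exponential edge-increment with the recursively defined subtree minimum does not return a clean multiple of $\tfrac{\lambda}{\lambda+|\mathbf{W}_j|^2}$ but leaves a residual tail of the form $e^{-|\mathbf{W}_j|^2\lambda}$, and the naive per-generation inequality $e^{-|\mathbf{W}_j|^2\lambda}\le\tfrac{\lambda}{\lambda+|\mathbf{W}_j|^2}$ is only valid once $\lambda\ge1$ and fails per-realization when a wavenumber is small. I would circumvent this by replacing the deterministic horizon $\lambda$ with an independent, memoryless (exponential rate-$\lambda$) horizon that \emph{refreshes} at each generation, for which the per-edge survival probability is \emph{exactly} $\tfrac{\lambda}{\lambda+|\mathbf{W}_j|^2}$ and the recursion closes cleanly; the delicate remaining point is then the comparison between the fixed-horizon event $[\zeta_n>\lambda]$ and this refreshing-clock survival event. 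Alternatively, in the dilogarithmic setting of Proposition \ref{dilogbrw} one can exploit the reciprocal symmetry $|\mathbf{W}_{s|j+1}|/|\mathbf{W}_{s|j}|\stackrel{d}{=}|\mathbf{W}_{s|j}|/|\mathbf{W}_{s|j+1}|$ (equivalently, $\ln$ of the ratio has an even density) to transport the complementary Laplace estimate, which naturally carries the factor $\tfrac{|\mathbf{W}_j|^2}{|\mathbf{W}_j|^2+\mu}$, onto the stated factor $\tfrac{\lambda}{\lambda+|\mathbf{W}_j|^2}$ with $\lambda$ and $\mu$ in reciprocal roles. Once the displayed bound is secured, the hypothesis $2^n\,\mathbb{E}_{|\nboldsymbol{\xi}|}\prod_{j=1}^n\tfrac{\lambda}{\lambda+|\mathbf{W}_j|^2}\to0$ completes the argument immediately through the monotonicity reduction of the first paragraph.
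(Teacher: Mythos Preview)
Your argument is aimed at the wrong conclusion. The printed conclusion $\mathbb{P}([\zeta=\infty])=0$ is a typo: the proposition is invoked to prove Theorem~\ref{th:nonexplosionBessel} (``The explosion horizon is almost surely infinite''), and the paper's own proof opens with ``To prove non-explosion\ldots''. The intended conclusion is $\mathbb{P}([\zeta<\infty])=0$. You have built an entire program to force $\zeta\le\lambda$ a.s., i.e., explosion; the obstacles you identify in closing the recursion are symptoms of trying to prove the wrong inequality, not technicalities to be patched.

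Once the direction is righted, the paper's argument is short and does not use any recursion or refreshing clocks. The event $[\zeta_n<B]=\bigcup_{|s|=n}\bigl[\sum_{j}|\mathbf{W}_{s|j}|^{-2}T_{s|j}<B\bigr]$ is a \emph{union} over the $2^n$ paths, so an honest union bound yields the factor $2^n$:
\[
\mathbb{P}_{|\nboldsymbol{\xi}|}(\zeta_n<B)\le 2^n\,\mathbb{P}_{|\nboldsymbol{\xi}|}\Bigl(\sum_{j=1}^n|\mathbf{W}_{1|j}|^{-2}T_{1|j}<B\Bigr).
\]
One then applies Markov's inequality to $e^{-\lambda\sum_j|\mathbf{W}_{1|j}|^{-2}T_{1|j}}$ and integrates out the i.i.d.\ Exp$(1)$ clocks conditionally on the $\mathbf{W}$'s; each edge contributes one rational factor, and the hypothesis drives $\mathbb{P}(\zeta_n<B)\to0$. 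Monotonicity of $\zeta_n$ then gives $\mathbb{P}(\zeta<B)=\lim_n\mathbb{P}(\zeta_n<B)=0$ for every $B$, hence $\zeta=\infty$ a.s.

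By contrast, your claimed estimate $\mathbb{P}(\zeta_n>\lambda)\le 2^n\,\mathbb{E}\prod_j\tfrac{\lambda}{\lambda+|\mathbf{W}_j|^2}$ cannot come from a union bound, because $[\zeta_n>\lambda]=\bigcap_{|s|=n}[\cdots]$ is an intersection; there the trivial estimate is $\mathbb{P}(\cap_s A_s)\le\min_s\mathbb{P}(A_s)$, with no $2^n$. The minimum in the definition of $\zeta_n$ only turns into a union on the complementary event $[\zeta_n<B]$, which is precisely why the paper works on that side. (As a minor aside, the conditional Laplace transform of $|\mathbf{W}_j|^{-2}T_j$ at $\lambda$ is $\tfrac{|\mathbf{W}_j|^2}{\lambda+|\mathbf{W}_j|^2}$, not $\tfrac{\lambda}{\lambda+|\mathbf{W}_j|^2}$; the paper's displayed factor appears to carry a further typo, but this does not affect the method.)
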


\begin{proof}
To prove non-explosion it suffices to show that for any $B > 0$,
$$P_{|\nboldsymbol{\xi}|}(\zeta_n > B \ \rm{eventually}) = 1,$$
or equivalently, that
$$P_{|\nboldsymbol{\xi}|}([\zeta < B]) = 
P_{|\nboldsymbol{\xi}|}(\cap_{n=1}^\infty[\zeta_n  < B]) \equiv \lim_{n\to\infty}P_{|\nboldsymbol{\xi}|}(\zeta_n < B) = 0$$
where we have used the monotonicity of the sequence $\{\zeta_n\}.$
For the latter observe that,
\begin{eqnarray*}
P_{|\nboldsymbol{\xi}|}(\zeta_n < B) &=& P_{|\nboldsymbol{\xi}|}(\min_{|s|= n}\sum_{j=1}^n|W_{s|j}|^{-2}T_{s|j} < B)
\nonumber\\
&\le& 2^nP_{|\nboldsymbol{\xi}|}(\sum_{j=1}^n|W_{1|j}|^{-2}T_{1|j} < B)\nonumber\\
&=& 2^nP_{|\nboldsymbol{\xi}|}(e^{-\lambda\sum_{j=1}^n|W_{1|j}|^{-2}T_{1|j}} > e^{-\lambda B})\nonumber
\end{eqnarray*}
for any $\lambda>0,$ where $1|j = (1,1,\dots,1)$ is on the fixed, but 
otherwise arbitrary,
tree path $(1,1,\dots)$. By the Markov inequality, one has
\begin{eqnarray*}
P_{|\nboldsymbol{\xi}|}(\zeta_n < B) 
&\le& 2^n\mathbb{E}_{|\nboldsymbol{\xi}|}e^{-\lambda\sum_{j=1}^n|W_{1|j}|^{-2}T_{1|j}} e^{\lambda B}\nonumber\\
&=&2^ne^{\lambda B}\mathbb{E}_{|\nboldsymbol{\xi}|}\prod_{j=1}^n\frac{\lambda}{ \lambda + |W_{1|j}|^2},
\end{eqnarray*}
which converges to $0$ as $n\to \infty.$
\end{proof}

As an illustration of this methodology we provide
a proof of Theorem \ref{th:nonexplosionBessel} below,
establishing that the branching Markov Chain defined using the Bessel
kernel $h_b(\nboldsymbol{\xi})$ to determine the distribution of the branching Fourier frequencies does
not explode. 
The mean reversion property (\ref{meanreversion}) provides some indication as to why one may expect the corresponding branching Markov chain to be non-explosive, as will be shown is indeed the case.  Namely,

\begin{thm} 
\label{th:nonexplosionBessel}
The explosion horizon is almost surely infinite for the Bessel Markov chain.
\end{thm}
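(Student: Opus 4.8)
The plan is to invoke Proposition~\ref{prop:monotonicity}, so that it suffices to exhibit a single $\lambda>0$ for which
\[
2^n\,\mathbb{E}_{|\nboldsymbol{\xi}|}\prod_{j=1}^n\frac{\lambda}{\lambda+|\mathbf{W}_j|^2}\longrightarrow 0\qquad(n\to\infty),
\]
where, by Proposition~\ref{besselbmc}, along a fixed tree path the magnitudes $|\mathbf{W}_0|=|\nboldsymbol{\xi}|,|\mathbf{W}_1|,|\mathbf{W}_2|,\dots$ form a Markov chain with the explicit stationary transition density $p(u,v)$. First I would recast this expectation via the (sub‑Markovian) transfer operator $(Tg)(u)=\int_0^\infty \frac{\lambda}{\lambda+v^2}\,p(u,v)\,g(v)\,dv$ on $L^\infty(0,\infty)$: by the Markov property and induction on $n$, one has $\mathbb{E}_{|\nboldsymbol{\xi}|}\prod_{j=1}^n\frac{\lambda}{\lambda+|\mathbf{W}_j|^2}=(T^n\mathbf 1)(|\nboldsymbol{\xi}|)\le\|T\|_{\infty}^{\,n}$, so everything reduces to bounding the operator norm $\|T\|_\infty=\sup_{u>0}\int_0^\infty\frac{\lambda}{\lambda+v^2}\,p(u,v)\,dv$.

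The key elementary estimate is the uniform bound $p(u,v)\le 2$ for all $u,v>0$. This is immediate from the explicit formula of Proposition~\ref{besselbmc}: for $v\ge u$ one has $p(u,v)=\frac{e^{-2v}(e^{2u}-1)}{u}\le\frac{1-e^{-2u}}{u}$, and for $0\le v\le u$ one has $p(u,v)=\frac{1-e^{-2v}}{u}\le\frac{1-e^{-2u}}{u}$, and $r\mapsto (1-e^{-2r})/r$ is decreasing with value $2$ in the limit $r\to 0^+$. (This is the precise quantitative form, in the present setting, of the mean reversion in \eqref{meanreversion}: the chain never places more than mass $2\delta$ below level $\delta$.) Consequently
\[
\|T\|_\infty\le 2\int_0^\infty\frac{\lambda}{\lambda+v^2}\,dv=\pi\sqrt{\lambda},
\]
the point being that the quadratic denominator makes this integral of order $\sqrt{\lambda}$ rather than order $1$. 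Hence $2^n\,\mathbb{E}_{|\nboldsymbol{\xi}|}\prod_{j=1}^n\frac{\lambda}{\lambda+|\mathbf{W}_j|^2}\le(2\pi\sqrt{\lambda})^n$, and choosing any $\lambda<1/(4\pi^2)$ forces the right‑hand side to $0$; Proposition~\ref{prop:monotonicity} then yields that explosion does not occur, i.e.\ the explosion horizon $\zeta$ for the Bessel chain is almost surely infinite.

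There is little genuine obstacle once the uniform density bound is observed; the two things to take care over are (i) verifying that along a fixed tree path the norms $|\mathbf{W}_j|$ really do evolve as the stationary Markov chain of Proposition~\ref{besselbmc}, so that the one‑step estimate iterates cleanly under the tower property, and (ii) checking that $\sup_u\int_0^\infty\frac{\lambda}{\lambda+v^2}p(u,v)\,dv$ is finite and small — which is exactly the place where the decay built into the Bessel kernel, encoded in $p(u,v)\le 2$ together with the $v^2$ in the denominator, is used. A slightly softer alternative that avoids evaluating this supremum explicitly would be to produce a positive function $\varphi$ with $T\varphi\le\theta\varphi$ for some $\theta<1/2$ and $\mathbf 1\le C\varphi$, but the $L^\infty$ bound above is already sharp enough and requires no guesswork, so I would present that route.
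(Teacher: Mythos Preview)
Your proof is correct and follows the same overall architecture as the paper: invoke Proposition~\ref{prop:monotonicity}, establish the uniform one-step bound $\mathbb{E}_u\frac{\lambda}{\lambda+W^2}\le\pi\sqrt{\lambda}$, iterate via the tower property to get $(\pi\sqrt{\lambda})^n$, and choose $\lambda<1/(2\pi)^2$. The difference lies entirely in how the one-step bound is obtained. The paper proves it as a separate lemma (Lemma~\ref{lm:expectation}) by integrating by parts against the explicit density $p_u$, tracking boundary terms and arctangent primitives, and only at the end invoking $(1-e^{-2u})/u\le 1$. You instead observe directly that $p(u,v)\le(1-e^{-2u})/u\le 2$ uniformly, which reduces the one-step expectation to the elementary integral $2\int_0^\infty\frac{\lambda}{\lambda+v^2}\,dv=\pi\sqrt{\lambda}$. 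Your route is shorter and more transparent---the uniform pointwise density bound makes the mechanism (the Bessel chain cannot concentrate mass near zero) visible in one line---while the paper's integration-by-parts computation, though it arrives at the identical constant, somewhat obscures this. Both yield exactly the same numerical threshold, so neither gains quantitatively over the other.
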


 For the proof we first note the following more refined
property of the Bessel Markov chain.

\begin{lemma}
\label{lm:expectation}
Assume that $W$ is a non negative random variable with probability density
$$p_u(w) = 
\left\{
\begin{array}{ll}
\frac{1}{u}e^{-2w} (e^{2u} -1) & {\rm{for}}~~w\geq u, u > 0 \\
\frac{1}{u}(1-e^{-2w}) & {\rm{for}}~~~0\leq w \leq u, u > 0.
\end{array}
\right.
$$
where $u$ is an arbitrary positive constant.
Then
$$\mathbb{E}_u\frac{\lambda}{\lambda + W^2} \le \pi\sqrt{\lambda}, \quad \forall\, u, \lambda > 0.$$
\end{lemma}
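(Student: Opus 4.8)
The plan is to reduce the estimate to one elementary fact: the density $p_u$ is bounded by $2$, uniformly in $u$. Once that is in hand, the inequality follows by integrating against the kernel $\lambda/(\lambda+w^2)$, whose total mass on $(0,\infty)$ is exactly $\tfrac{\pi}{2}\sqrt{\lambda}$.

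First I would prove that $p_u(w)\le 2$ for all $w>0$ and all $u>0$. On $0<w\le u$ one has $p_u(w)=\frac{1-e^{-2w}}{u}$, and the elementary bound $1-e^{-x}\le x$ gives $p_u(w)\le \frac{2w}{u}\le 2$. On $w>u$ one has $p_u(w)=\frac{(e^{2u}-1)\,e^{-2w}}{u}$, and writing $e^{2u}-1=\int_0^{2u}e^{t}\,dt\le 2u\,e^{2u}$ yields $p_u(w)\le 2e^{-2(w-u)}\le 2$. (Alternatively: $p_u$ is increasing on $(0,u)$ and decreasing on $(u,\infty)$, hence $\sup_w p_u(w)=p_u(u)=\frac{1-e^{-2u}}{u}<2$.)

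Next I would record $\int_0^\infty \frac{\lambda}{\lambda+w^2}\,dw=\frac{\pi}{2}\sqrt{\lambda}$, which follows from the substitution $w=\sqrt{\lambda}\,t$ together with $\int_0^\infty \frac{dt}{1+t^2}=\frac{\pi}{2}$. Combining this with the uniform bound on $p_u$,
$$
\mathbb{E}_u\frac{\lambda}{\lambda+W^2}=\int_0^\infty \frac{\lambda}{\lambda+w^2}\,p_u(w)\,dw\le 2\int_0^\infty \frac{\lambda}{\lambda+w^2}\,dw=\pi\sqrt{\lambda},
$$
valid for every $u,\lambda>0$, which is the assertion of the lemma.

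There is no real obstacle here; the only point worth emphasizing is that the bound on $\mathbb{E}_u\,\lambda/(\lambda+W^2)$ must be free of $u$, and this is precisely why one wants the uniform estimate $p_u\le 2$ rather than the pointwise-but-$u$-dependent bound $p_u\le 1/u$. If a smaller constant were desired, one could split the integral at $w=u$ and use $p_u(w)\le 2w/u$ on the lower range, but this refinement is unnecessary for the application to Theorem \ref{th:nonexplosionBessel}.
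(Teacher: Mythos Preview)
Your argument is correct and considerably more streamlined than the paper's. The paper proceeds by an explicit computation: it integrates by parts in $\int \frac{\lambda}{\lambda+w^2}e^{-2w}\,dw$, splits the expectation into the three pieces coming from the two branches of $p_u$, and after cancellations arrives at an expression bounded by $\pi\sqrt{\lambda}\cdot\frac{1-e^{-2u}}{2u}$, which is then controlled via $(1-e^{-x})/x\le 1$. Your approach bypasses all of this by observing the uniform pointwise bound $p_u\le 2$ and pairing it with the closed-form integral $\int_0^\infty \frac{\lambda}{\lambda+w^2}\,dw=\tfrac{\pi}{2}\sqrt{\lambda}$. The paper's route yields the slightly sharper estimate $\mathbb{E}_u\frac{\lambda}{\lambda+W^2}\le \pi\sqrt{\lambda}\cdot\frac{1-e^{-2u}}{2u}$, but that refinement is never used in the application to Theorem~\ref{th:nonexplosionBessel}, so your simpler argument loses nothing.
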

\begin{proof}
Use integration by parts to note that
$$
\int
 \frac{\lambda}{\lambda + w^2}  e^{-2w} ~{\rm{d}}w
 =
 \sqrt{\lambda} \arctan(w/{\sqrt\lambda}) e^{-2w} 
 +
 2\sqrt{\lambda} \int \arctan(w/{\sqrt\lambda}) e^{-2w} ~{\rm{d}}w.
$$
One has
\begin{align*}
\mathbb{E}_u\frac{\lambda}{\lambda + w^2} =&
\int_0^\infty\frac{\lambda}{\lambda + w^2} p_u(w) ~ {\rm{d}}w \\
=&
\frac{1}{u}\int_0^u \frac{\lambda}{\lambda + w^2} ~{\rm{d}}w
-\frac{1}{u}\int_0^\infty \frac{\lambda}{\lambda + w^2}  e^{-2w} ~{\rm{d}}w 
+
\frac{1}{u}\int_u^\infty \frac{\lambda}{\lambda + w^2}  e^{-2(w-u)} ~{\rm{d}}w \\
=& 
\frac{1}{u} \sqrt{\lambda} \arctan(u/{\sqrt\lambda}) 
-\frac{1}{u} 2\sqrt{\lambda}
\int_0^\infty \arctan(w/\sqrt{\lambda}) e^{-2w} ~{\rm{d}}w \\
&-\frac{1}{u}\sqrt{\lambda} \arctan(u/\sqrt{\lambda}) 
+
\frac{1}{u} 2 \sqrt{\lambda} \int_u^\infty \arctan(w/\sqrt{\lambda}) e^{-2(w-u)} ~{\rm{d}}w \\
=&
\frac{2 \sqrt{\lambda}}{u}
\int_u^\infty
\arctan(w/\sqrt{\lambda}) (e^{-2(w-u)}  - e^{-2w}) ~{\rm{d}}w
-
\frac{2 \sqrt{\lambda}}{u}
\int_0^u
\arctan(w/\sqrt{\lambda}) e^{-2w} ~{\rm{d}}w \\
\leq &
\frac{\pi \sqrt{\lambda}}{u}
\int_u^\infty
 (e^{-2(w-u)}  - e^{-2w}) ~{\rm{d}}w
 =
 \pi \sqrt{\lambda} 
 \frac{1}{2 u}(1-e^{-2u}).
\end{align*}
The result follows by noting that $(1-e^{-x})/x \leq 1$ for any $x.$
\end{proof}

\noindent
{\textit{Proof of Theorem \ref{th:nonexplosionBessel}}}

Note that successive use of conditional expectations on $\mathcal{F}_{j},$ 
the sigma field generated by the branching process
up to the $j^{\rm{th}}$ branching event, and Lemma
\ref{lm:expectation} one has
\begin{eqnarray*}
\mathbb{E}_{|\nboldsymbol{\xi}|}\prod_{j=1}^n\frac{\lambda}{ \lambda + |\mathbf{W}_j|^2} &=&
\mathbb{E}_{|\nboldsymbol{\xi}|} \left[ \prod_{j=1}^{n-1}\frac{\lambda}{ \lambda + |\mathbf{W}_j|^2}
\mathbb{E}_{|\mathbf{W}_{n-1}|} (\frac{\lambda}{\lambda + |\mathbf{W}_n|^2})\right] \\
&\leq&
(\pi\sqrt{\lambda}) ~ 
\mathbb{E}_{|\nboldsymbol{\xi}|} \left[ \prod_{j=1}^{n-1}\frac{\lambda}{ \lambda + |\mathbf{W}_j|^2}\right]\\
&\leq&
(\pi\sqrt{\lambda})^n 
\end{eqnarray*}

The theorem follows applying Proposition \ref{prop:monotonicity} with
$\lambda<1/(2\pi)^2.$
\qed


Use of the monotonicity approach is less transparent for
analysis of the
dilogarithmic explosion problem.  Another
 approach is generally possible
that builds on a variant of the Biggins-Kingman-Hammersley (BKH), e.g., see \cites{JB1977, JB1997, JB2010, EBBD1999},
computation of the speed of the
leftmost particle for additive branching random walks in terms of multiplicative branching
random walk.  It is potentially applicable to the dilogarthmic kernel
precisely because for any path $s\in\{1,2\}^\infty$, 
\begin{equation}
|\mathbf{W}_{s|n}| = |\nboldsymbol{\xi}|\prod_{j=1}^n\frac{|\mathbf{W}_{s|j}|}{ |\mathbf{W}_{s|j-1}|}, n = 1,2,\dots,
\end{equation}
and the ratios are i.i.d. That is,
 for any path $s\in\{1,2\}^\infty$, the sequence
$\{|\mathbf{W}_{s|j}| : j = 0,1,\dots\}$ is a random walk on the
multiplicative group $(0,\infty)$ starting at $|\mathbf{W}_{s|0}| = |\nboldsymbol{\xi}|$. That is, for the dilogarithmic kernel the branching
Markov chain is in fact a
branching random walk on the multiplicative group
$(0,\infty)$.

First let us recall the general heuristic underlying (BKH) speed calculations on the additive group of real numbers:
Suppose that $\{S_n: n = 0,1,2,\dots\}$ is an additive random walk on $\mathbb{R}$
with mean zero and starting at zero.   Then by the weak law of large numbers $P(S_n < nc)\to 0$ as
$n\to\infty$ for any $c < 0$.   
Let $m(\theta) = \mathbb{E}e^{\theta S_1}$ and consider
 the following large deviation inequality
\begin{eqnarray}
m(\theta)^n &=& \mathbb{E}e^{\theta S_n}\nonumber\\
&\ge& e^{n\theta c}P(S_n > nc).
\end{eqnarray}
Thus,
\begin{equation}
P(S_n > nc) \le \exp\{-n(\theta c - \ln m(\theta))\},
\end{equation}
and in particular,
\begin{equation}
P(S_n > nc) \le \exp\{-n\sup_{c< 0}(\theta c-\ln m(\theta))\} = e^{-nI(c)},
\end{equation}
where
$I(c) = \sup_{c< 0}(\theta c-\ln m(\theta))$ is the Legendre transform of
$\ln m(\theta)$ at $c$.   The Cramer-Chernoff theorem provides general
conditions for which 
$$\lim_{n\to\infty}\frac{\ln P(S_n > nc)}{ n} = -I(c).$$
To apply this to the computation of the speed of the left-most particle of 
a branching random walk one reasons as follows:  At the $n$-th generation
the expected number of particles located to the left of $c < 0$ is 
$2^ne^{-nI(c)}$ for large $n$.  Thus  the extremal speed is given by
 $\gamma = c$
such that $2^ne^{-nI(c)} \approx 1$.  The (BKH) theorem confirms this. 
For the calculations involved here it is actually enough to calculate
a lower bound on the speed.  

This principle translates to the multiplicative group
as follows: 
\begin{prop}  
\label{thm:leftmostparticlespeed}
Consider a binary branching random walk on the multiplicative group 
$(0,\infty)$.
That is, the $n$-th generation particle position for the genealogy $s = (s_1,\dots,s_n)\in\{1,2\}^n$ is given by the product $\prod_{j=1}^nY_{s|j}$, where $(Y_{v*1},Y_{v*2})$'s are i.i.d random vectors with positive components.  Then
\begin{equation*}
\lim_{n\to\infty}\min_{|s|=n}(\prod_{j=1}^nY_{s|j})^{1/ n} = e^\gamma,
\end{equation*}
where $\gamma$ is the speed of the additive branching random walk with 
displacements $\ln Y_v$, $v\in\cup_{n=1}^\infty\{1,2\}^n$.
\end{prop}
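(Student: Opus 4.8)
The plan is to take logarithms, turning the multiplicative branching random walk into an ordinary additive one, and then to quote the Biggins--Kingman--Hammersley (BKH) theorem. For a genealogy $s$ with $|s|=n$ write $S_n(s) = \sum_{j=1}^n \ln Y_{s|j}$; this is the generation-$n$ position, along the line $s$, of the additive branching random walk whose displacement vectors are the $(\ln Y_{v*1},\ln Y_{v*2})$. Since $x\mapsto e^x$ and $x\mapsto x^{1/n}$ are increasing on $(0,\infty)$,
\[
\min_{|s|=n}\Bigl(\prod_{j=1}^n Y_{s|j}\Bigr)^{1/n} = \exp\Bigl(\tfrac1n \min_{|s|=n}S_n(s)\Bigr) = e^{M_n/n},\qquad M_n := \min_{|s|=n}S_n(s).
\]
Hence the claim is equivalent to $M_n/n \to \gamma$ almost surely, with $\gamma$ the speed of the leftmost particle of this additive walk, after which continuity of the exponential finishes the argument.

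First I would record the first-moment (upper) estimate, which is the half actually used in the explosion analysis. Let $m(\theta) = \mathbb{E}\,e^{\theta\ln Y_1}$ and let $I$ be its Legendre transform. For $c$ below the mean $\mathbb{E}\ln Y_1$, the expected number of generation-$n$ particles at or below level $nc$ is $2^n\,\mathbb{P}(S_n < nc) \le 2^n e^{-nI(c)}$ by the Cram\'er--Chernoff bound, so by Borel--Cantelli $\liminf_n M_n/n \ge \gamma$, where $\gamma$ is the value on the decreasing branch of $I$ determined by $I(\gamma) = \ln 2$ (equivalently $2^n e^{-nI(\gamma)} \approx 1$). The matching inequality $\limsup_n M_n/n \le \gamma$ is the genuine content of BKH --- one must exhibit, for each $c>\gamma$, generation-$n$ particles at or below $nc$, which is the standard second-moment/truncation argument along suitable sub-branchings --- and I would cite it rather than reproduce it. Together these give $M_n/n \to \gamma$ a.s., whence $e^{M_n/n}\to e^\gamma$.

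The only real obstacle is checking the hypotheses under which the BKH theorem applies: one needs $m(\theta) = \mathbb{E}\,e^{\theta\ln Y_1}$ finite on an appropriate interval of $\theta$ (some exponential integrability of $\ln Y_1$) and enough branching for $\gamma$ to be finite rather than $-\infty$. In the cases of interest this is verified directly from explicit densities: for the dilogarithmic random walk the density $\tfrac{2}{\pi^2}\ln\bigl|\coth(t/2)\bigr|$ of $\ln\bigl(|\mathbf{W}_{s|j+1}|/|\mathbf{W}_{s|j}|\bigr)$ has exponential tails, so $m(\theta)<\infty$ near $\theta=0$; for the Bessel kernel the mean-reversion identity \eqref{meanreversion} makes the analogous bound elementary. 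A secondary point is that if the two log-displacements are not identically distributed or are dependent, the first-moment bound should be run through the many-to-one lemma with the equal mixture of the two one-step laws; for the dilogarithmic random walk the successive ratios along any fixed ray are i.i.d., so the single-walk version above already applies as written.
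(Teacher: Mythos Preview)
Your proof is correct and follows exactly the same route as the paper's: write $\prod_{j=1}^n Y_{s|j} = \exp\bigl(\sum_{j=1}^n \ln Y_{s|j}\bigr)$, pass the minimum and the $1/n$ through the exponential using that $\exp$ is a continuous increasing bijection, and then invoke BKH for the additive walk. The paper's own proof is in fact just these two lines, with the first-moment heuristic and the hypothesis discussion you include placed in the surrounding text rather than in the proof itself.
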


\begin{proof}
Simply write $\prod_{j=1}^nY_{s|j} = \exp\{\sum_{j=1}^n\ln Y_{s|j}\}$.
Then 
$$\lim_{n\to\infty}\min_{|s|=n}(\prod_{j=1}^nY_{s|j})^{1/n}
= \exp\{\lim_{n\to\infty}\min_{|s|=n}\frac{\sum_{j=1}^n\ln Y_{s|j}}{ n}\}.$$
The assertion follows from the (BKH) theory since the exponential 
function is a continuous bijection.
 \end{proof}

This now provides the following approach to proving
non-explosion by exploiting the theory of the
speed of  extremal (leftmost) particles in branching random walks.  Namely,

\begin{prop}
\label{prop:BC}
For $s\in\cup_{n=1}^\infty\{1,2\}^n,$ let $\{T_{s}\}$ be i.i.d.
 mean one exponentially distributed random variables independent of random
 variables in $\mathbb{R}^3,$ and independent of 
 $\{{\mathbf{W}_s}\}.$ 
 Assume that
\begin{equation}
\liminf_{n\to\infty}\min_{|s|=n}\big(\prod_{j=1}^n|\mathbf{W}_{s|j}|^{-1}\big)^{1/n} > 0.
\end{equation}
Then 
 $
\mathbb{P}([\zeta = \infty]) = 0.
$
\end{prop}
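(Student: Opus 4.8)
\emph{Plan.} The goal is to show that the explosion event $[\zeta<\infty]$ is null, i.e.\ that $\zeta=\infty$ almost surely. By the monotonicity recorded after Definition~\ref{expltime} we have $\zeta=\lim_{n\to\infty}\zeta_n$ with $\zeta_n=\min_{|s|=n}\sum_{j=1}^n|\mathbf{W}_{s|j}|^{-2}T_{s|j}$, so it suffices to prove $\zeta_n\to\infty$ a.s. The idea is to bound the \emph{additive} functional defining $\zeta_n$ below by a \emph{multiplicative} one, thereby decoupling the role of the frequencies from that of the holding times: the growth hypothesis controls the frequency factor, while the Biggins--Kingman--Hammersley speed theory of Proposition~\ref{thm:leftmostparticlespeed} controls the holding-time factor.

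\emph{Steps.} First, along each fixed path the arithmetic--geometric mean inequality applied to the nonnegative summands yields
\[
\sum_{j=1}^n|\mathbf{W}_{s|j}|^{-2}T_{s|j}\ \geq\ n\Big(\prod_{j=1}^n|\mathbf{W}_{s|j}|^{-2}\Big)^{1/n}\Big(\prod_{j=1}^n T_{s|j}\Big)^{1/n}.
\]
Minimizing over $|s|=n$ and using $\min_s(a_sb_s)\geq(\min_s a_s)(\min_s b_s)$ for nonnegative arrays gives
\[
\zeta_n\ \geq\ n\,\Big[\min_{|s|=n}\big(\textstyle\prod_{j=1}^n|\mathbf{W}_{s|j}|^{-1}\big)^{1/n}\Big]^{2}\ \min_{|s|=n}\big(\textstyle\prod_{j=1}^n T_{s|j}\big)^{1/n},
\]
where I have used $\min_s(\prod_j|\mathbf{W}_{s|j}|^{-2})^{1/n}=[\min_s(\prod_j|\mathbf{W}_{s|j}|^{-1})^{1/n}]^{2}$. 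By the hypothesis the bracket has $\liminf_n\geq c^2$ with $c:=\liminf_n\min_{|s|=n}(\prod_j|\mathbf{W}_{s|j}|^{-1})^{1/n}>0$. For the last factor I invoke Proposition~\ref{thm:leftmostparticlespeed} with i.i.d.\ displacements $Y_v=T_v$ (mean-one exponentials): $\min_{|s|=n}(\prod_j T_{s|j})^{1/n}\to e^{\gamma}$, where $\gamma$ is the leftmost-particle speed of the additive branching random walk with step $\ln T$. Hence $\liminf_n \zeta_n/n\geq c^2 e^{\gamma}$, and provided $e^{\gamma}>0$ this forces $\zeta_n\to\infty$, so $\zeta=\infty$ a.s.\ and explosion does not occur.

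\emph{Main obstacle.} The crux is the positivity $e^{\gamma}>0$, i.e.\ that the leftmost speed $\gamma$ is finite rather than $-\infty$; a priori the minimum over the $2^n$ paths, whose steps $\ln T$ have an unbounded left tail, could drive $\prod_j T_{s|j}$ below any geometric rate. I would settle this by a first-moment (Chernoff) bound: since $\mathbb{E}[T^{-\theta}]=\Gamma(1-\theta)<\infty$ for $0<\theta<1$, the expected number of generation-$n$ vertices with $\sum_{j}\ln T_{s|j}\leq na$ is at most $\big(2\,\Gamma(1-\theta)\,e^{\theta a}\big)^n$, which tends to $0$ whenever $a<\sup_{0<\theta<1}\theta^{-1}\big(-\ln(2\Gamma(1-\theta))\big)$; a Borel--Cantelli argument then shows $\liminf_n n^{-1}\min_{|s|=n}\sum_j\ln T_{s|j}$ is bounded below by this finite (negative) constant, so $e^{\gamma}>0$. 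Two minor points remain: that the convergence in Proposition~\ref{thm:leftmostparticlespeed} is almost sure, and that the two minima above need not be attained on the same path---both are harmless, the latter because the product-of-minima step requires only a pathwise lower bound.
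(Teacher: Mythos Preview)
Your proof is correct and follows essentially the same approach as the paper: bound $\zeta_n$ below via the arithmetic--geometric mean inequality by $n$ times a product, split the product into its $|\mathbf{W}|^{-1}$ and $\sqrt{T}$ factors via $\min_s(a_sb_s)\geq(\min_s a_s)(\min_s b_s)$, invoke the hypothesis on the first factor, and control the second by a Chernoff/Borel--Cantelli estimate using $\mathbb{E}T^{-\theta}=\Gamma(1-\theta)$. Your route is in fact slightly cleaner than the paper's: you apply AM--GM directly to the terms $|\mathbf{W}_{s|j}|^{-2}T_{s|j}$, whereas the paper first uses ``mean of squares $\geq$ square of the mean'' and then AM--GM, arriving at the identical bound $\zeta_n\geq n\big(\prod_j|\mathbf{W}_{s|j}|^{-1}\sqrt{T_{s|j}}\big)^{2/n}$; and you dispense with the paper's preliminary Markov-inequality remark, which is not actually used in the argument as completed.
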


\begin{proof}

 In view of the Borel-Cantelli lemma one has that 
 \begin{equation}
\sum_{n=1}^\infty P_{\nboldsymbol{\xi}}(\min_{|s|=n}\sum_{j=1}^n|\mathbf{W}_{s|j}|^{-2}T_{s|j} \le M) < \infty, \ \text{for each} \ M > 0
 \end{equation}
 is a sufficient condition for explosion to be a null event.   Observe that for arbitrary 
 $M, \lambda > 0$,
 \begin{eqnarray}
 P_{\nboldsymbol{\xi}}(\min_{|s|=n}\sum_{j=1}^n|\mathbf{W}_{s|j}|^{-2}T_{s|j} \le M) 
 &=& P(e^{-\lambda\min_{|s|=n}\sum_{j=1}^n|\mathbf{W}_{s|j}|^{-2}T_{s|j}} \ge e^{-\lambda M})\nonumber\\
 &\le& e^{\lambda M}\mathbb{E}e^{-\lambda\min_{|s|=n}\sum_{j=1}^n|\mathbf{W}_{s|j}|^{-2}T_{s|j}}.
 \end{eqnarray}
 Also, since the mean of squares is larger than the square of the mean, and since
the arithmetic mean is larger than the geometric mean, one has
\begin{eqnarray}
\label{lowbound}
\min_{|s|=n}\sum_{j=1}^n|\mathbf{W}_{s|j}|^{-2}T_{s|j} &=& \min_{|s|=n}\sum_{j=1}^n\big(|\mathbf{W}_{s|j}|^{-1}\sqrt{T_{s|j}}\big)^2\nonumber\\
&\ge& n\min_{|s|=n}\big(\frac{1}{ n}\sum_{j=1}^n|\mathbf{W}_{s|j}|^{-1}\sqrt{T_{s|j}}\big)^2\nonumber\\
&\ge& n\min_{|s|=n}\big(\prod_{j=1}^n|\mathbf{W}_{s|j}|^{-1}\sqrt{T_{s|j}}\big)^{2/ n}.
\end{eqnarray}
So the problem is reduced to showing that
\begin{equation}
\liminf_{n\to\infty}\min_{|s|=n}\big(\prod_{j=1}^n|\mathbf{W}_{s|j}|^{-1}\sqrt{T_{s|j}}\big)^{1/n} > 0.
\end{equation}
The  indicated (positive) lower bound is possibly infinite.
Since 
$$\min_{|s|=n}\big(\prod_{j=1}^n|\mathbf{W}_{s|j}|^{-1}\sqrt{T_{s|j}}\big) \ge \min_{|s|=n}\big(\prod_{j=1}^n|\mathbf{W}_{s|j}|^{-1}\big)\min_{|s|=n}\big(\prod_{j=1}^n\sqrt{T_{s|j}}\big)$$
the two multiplicative factors can be treated separately.  Moreover, the factor of $n$ may in (\ref{lowbound}) may
be included in either of these factors.   The next calculation shows that it is most effectively
assigned to the first factor.

Namely, let $\gamma_1$ be the speed for $\prod_{j=1}^n\sqrt{T_{s|j}}$. Then $\gamma_1$ is
directly computable from the above variant Proposition \ref{thm:leftmostparticlespeed} 
on (BKH).  
However it is sufficient to bound $\gamma_1$  away from zero.  Accordingly one has the following simple estimate.

For $M > 0$ and $u > 0$ one has
\begin{eqnarray}
\sum_{n=1}^\infty P(\min_{|s|=n}\prod_{j=1}^n\sqrt{T_{s|j}} < M) &\le& \sum_{n=1}^\infty 2^n P(\prod_{j=1}^n
\sqrt{T_{1|j}} < M)\nonumber\\
&=&  \sum_{n=1}^\infty 2^n P(\,\frac{1}{ 2n}\sum_{j=1}^n\ln(T_{1|j}) < \ln M\,)\nonumber\\
&=&  \sum_{n=1}^\infty 2^n P(\,e^{-\frac{u}{ 2}\sum_{j=1}^n\ln(T_{1|j})}  > e^{-un\ln M}\,)\nonumber\\
&\le&  \sum_{n=1}^\infty e^{n(\ln 2 + u\ln M + \ln \Gamma(1-\frac{u}{ 2}))}.
\end{eqnarray}
This series converges for $\Gamma(1-\frac{u}{ 2}) <\frac{1}{ 2}M^{-u}$.  Thus, taking $u = 1$, 
the series converges for any $M <\frac{1}{ 2\sqrt{\pi}}.$  It now follows from the Borel-Cantelli lemma
that  $$\gamma_1 >\frac{1}{2\sqrt{\pi}} > 0.$$
\end{proof}

Regardless of the approach taken, the resolution of the
explosion problem clearly involves a thorough understanding
of the dilogarithmic branching random walk and its properties.  We conclude this appendix with a few 
properties that may prove useful to this end and, at least,
provide some insight into the technical nature of the problem
in a future analysis. 

To the best of our knowledge, the dilogarithmic random walk
is introduced in the present article for the first time.  However an extensive treatment of the
dilogarithm function, its properties and a selection of other applications in both physics and mathematics, is available in \cite{AK1995}.

For the purposes of this article,  let us note
 the invariance (multiplicative group symmetry about the identity) of 
the distribution of the ratios, one has
\begin{equation}
 P_{\nboldsymbol{\xi}}(\frac{|\mathbf{W}_{s|j+1}|}{ |\mathbf{W}_{s|j}|} \le r) = 
\begin{cases}
2\pi^{-2}[\Li_2(r) - \Li_2(-r)] &\hbox{if }\ \  0 < r < 1 \\
1-2\pi^{-2}[\Li_2(\frac{1}{ r}) - \Li_2(-\frac{1}{ r})], &\hbox{if }\  \ r > 1.  
\end{cases}
\end{equation}

On the other hand, it is a rather direct calculation to check
that
\begin{prop}
\begin{eqnarray*}
\mathbb{E}_{\nboldsymbol{\xi}}\frac{|\mathbf{W}_{s|1}|}{ |\mathbf{W}_{s|0}|} &=& \infty, \quad \forall \ 0\neq\nboldsymbol{\xi}\in\mathbb{R}^3\\
\mathbb{E}_{\nboldsymbol{\xi}}\ln\frac{|\mathbf{W}_{s|1}|}{ |\mathbf{W}_{s|0}|} &=& 0, \quad \forall \ 0\neq\nboldsymbol{\xi}\in\mathbb{R}^3\\
\mathbb{E}_{\nboldsymbol{\xi}}\big(\ln\frac{|\mathbf{W}_{s|1}|}{ |\mathbf{W}_{s|0}|}\big)^m &<& \infty, \quad \forall m \ge 1, \ 0\neq\nboldsymbol{\xi}\in\mathbb{R}^3
\end{eqnarray*}
In particular, $\ln|\mathbf{W}_{s|n}| = \ln|\nboldsymbol{\xi}| + \sum_{j=1}^n\ln\frac{|\mathbf{W}_{s|j}|}{ |\mathbf{W}_{s|j-1}|}, n = 1,2,\dots,$ is a martingale.
\end{prop}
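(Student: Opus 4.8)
The plan is to read off all three identities from the explicit law of the ratio $R := |\mathbf{W}_{s|1}|/|\mathbf{W}_{s|0}|$ computed in Proposition~\ref{dilogbrw}, so that the statement becomes essentially a corollary of that result. Recall that under $P_{\nboldsymbol{\xi}}$, independently of $\nboldsymbol{\xi}$, one has $P_{\nboldsymbol{\xi}}(R\in dr) = 2\pi^{-2}\,r^{-1}\ln\bigl|\tfrac{1+r}{1-r}\bigr|\,dr$ on $(0,\infty)$, equivalently $P_{\nboldsymbol{\xi}}(\ln R\in dt) = 2\pi^{-2}\ln\bigl|\coth(t/2)\bigr|\,dt$ on $\mathbb{R}$. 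Each of the three claims then reduces to an elementary one-variable integral estimate.

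For $\mathbb{E}_{\nboldsymbol{\xi}}R = \infty$ I would write $\mathbb{E}_{\nboldsymbol{\xi}}R = 2\pi^{-2}\int_0^\infty \ln\bigl|\tfrac{1+r}{1-r}\bigr|\,dr$ and examine the integrand: it is continuous away from $r=1$, vanishes like $2r$ as $r\to 0^{+}$, has an integrable logarithmic singularity at $r=1$, but for large $r$ equals $\ln\bigl(1+\tfrac{2}{r-1}\bigr)$, which is $\Theta(1/r)$; hence the tail of the integral diverges logarithmically and $\mathbb{E}_{\nboldsymbol{\xi}}R=\infty$. For the finiteness of all moments of $\ln R$ I would use the second density, $\mathbb{E}_{\nboldsymbol{\xi}}|\ln R|^m = 2\pi^{-2}\int_{-\infty}^\infty |t|^m\ln\bigl|\coth(t/2)\bigr|\,dt$. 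Near $t=0$ one has $\coth(t/2)\sim 2/|t|$, so the integrand is $O\bigl(|t|^m\ln(1/|t|)\bigr)$ and integrable there; as $|t|\to\infty$, $\ln\coth(|t|/2)=\ln\bigl(1+O(e^{-|t|})\bigr)=O(e^{-|t|})$, so $|t|^m$ times this is integrable at infinity. Hence every moment is finite, and in particular $\ln R\in L^1$.

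The identity $\mathbb{E}_{\nboldsymbol{\xi}}\ln R = 0$ is then immediate: since $\coth(-t/2)=-\coth(t/2)$, the density $2\pi^{-2}\ln|\coth(t/2)|$ is an even function of $t$, so the now absolutely convergent first moment $\int_{-\infty}^\infty t\cdot 2\pi^{-2}\ln|\coth(t/2)|\,dt$ vanishes by oddness of the integrand. Finally, the martingale assertion follows from Proposition~\ref{dilogbrw} once more: the increments $D_j := \ln\bigl(|\mathbf{W}_{s|j}|/|\mathbf{W}_{s|j-1}|\bigr)$, $j\ge 1$, are i.i.d.\ under $P_{\nboldsymbol{\xi}}$, integrable by the $m=1$ case above, and centered by the previous step; therefore $\ln|\mathbf{W}_{s|n}| = \ln|\nboldsymbol{\xi}| + \sum_{j=1}^n D_j$ is a partial-sum process of independent, mean-zero, integrable increments, hence a martingale with respect to the filtration generated by $(\mathbf{W}_{s|1},\dots,\mathbf{W}_{s|n})$.

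The only point that requires any care is the asymptotic bookkeeping: the $\Theta(1/r)$ tail of $\ln\bigl|\tfrac{1+r}{1-r}\bigr|$, which is precisely why $\mathbb{E}_{\nboldsymbol{\xi}}R$ just barely fails to be finite, as against the logarithmic blow-up at the origin and exponential decay at infinity of $\ln|\coth(t/2)|$, which is why every log-moment converges. None of this is substantive, and I do not expect a genuine obstacle; the proposition is a direct consequence of the density formulas of Proposition~\ref{dilogbrw}.
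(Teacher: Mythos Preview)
Your proposal is correct and matches the paper's approach: the paper simply states that the proposition follows by ``a rather direct calculation'' from the explicit density formulas of Proposition~\ref{dilogbrw}, and that is precisely what you carry out. Your asymptotic checks on the tail of $\ln\bigl|\tfrac{1+r}{1-r}\bigr|$ and on $\ln|\coth(t/2)|$ near $0$ and $\infty$, together with the evenness argument for $\mathbb{E}_{\nboldsymbol{\xi}}\ln R=0$, are exactly the computations the paper leaves to the reader.
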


As a consequence one has the following

\begin{cor}  The dilogarithmic random walk is 1-neighborhood recurrent in the sense that for fixed but arbitrary $s\in\{1,2\}^\infty,$ for each
$\delta > 1$
\begin{equation*}
P(|\mathbf{W}_{s|n}| < 1 + \delta\   i.o.) = 1
\end{equation*}
In particular, the path-wise explosion times are a.s. infinite for each path $s$.
\end{cor}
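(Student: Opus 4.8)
The plan is to reduce the statement to the classical recurrence of a mean-zero real random walk. Fix the path $s\in\{1,2\}^\infty$, set $\mathbf{W}_{s|0}=\nboldsymbol{\xi}$, and recall from the preceding proposition that the increments $Y_j:=\ln\bigl(|\mathbf{W}_{s|j}|/|\mathbf{W}_{s|j-1}|\bigr)$, $j\ge 1$, are i.i.d.\ under $P_{\nboldsymbol{\xi}}$ with $\mathbb{E}_{\nboldsymbol{\xi}}Y_j=0$ and with finite moments of every order; in particular the common law is non-degenerate, being the law of the logarithm of a variable with the continuous dilogarithmic density $\mathcal{D}$. Hence $S_n:=\ln|\mathbf{W}_{s|n}|-\ln|\nboldsymbol{\xi}|=\sum_{j=1}^n Y_j$ is a mean-zero, finite-variance random walk on $\mathbb{R}$.

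First I would invoke the Chung--Fuchs recurrence theorem: a random walk on $\mathbb{R}$ with i.i.d.\ increments of mean zero is recurrent, and since here the increment law is non-degenerate one in fact has $\liminf_{n\to\infty}S_n=-\infty$ almost surely (the event $\{\liminf_n S_n=-\infty\}$ lies in the exchangeable $\sigma$-field, so a Hewitt--Savage $0$--$1$ law applies, and recurrence rules out a finite value). Consequently, for every constant $c$ — in particular for $c=\ln(1+\delta)-\ln|\nboldsymbol{\xi}|$ — one has $S_n<c$ for infinitely many $n$, i.e.\ $|\mathbf{W}_{s|n}|<1+\delta$ infinitely often, $P_{\nboldsymbol{\xi}}$-almost surely. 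This is the asserted $1$-neighborhood recurrence. (One may avoid quoting Chung--Fuchs and argue directly: by the central limit theorem $P_{\nboldsymbol{\xi}}(S_n<c)\to 1/2$, so by Fatou $P_{\nboldsymbol{\xi}}(S_n<c\ \text{i.o.})\ge 1/2$, and since this event is exchangeable it has probability $1$.)

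For the ``in particular'' I would condition on $\mathcal{G}_s:=\sigma(\mathbf{W}_{s|j}:j\ge 1)$, which by construction is independent of the family $\{T_{s|j}:j\ge1\}$ of i.i.d.\ mean-one exponentials. On the almost-sure event that the random index set $\mathcal{N}:=\{j\ge1:|\mathbf{W}_{s|j}|<1+\delta\}$ is infinite, the path-wise explosion time along $s$ satisfies
\[
\sum_{j=1}^\infty |\mathbf{W}_{s|j}|^{-2}T_{s|j}\ \ge\ (1+\delta)^{-2}\sum_{j\in\mathcal{N}}T_{s|j}.
\]
Conditionally on $\mathcal{G}_s$ the set $\mathcal{N}$ is fixed and $\{T_{s|j}:j\in\mathcal{N}\}$ is still an infinite i.i.d.\ family of exponentials, so $\sum_{j\in\mathcal{N}}T_{s|j}=+\infty$ a.s.\ (e.g.\ $T_{s|j}>1$ for infinitely many $j\in\mathcal{N}$ by the second Borel--Cantelli lemma); averaging over $\mathcal{G}_s$ yields that the path-wise explosion time is a.s.\ infinite. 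The only point that needs care is precisely this last step — making sure the randomness of $\mathcal{N}$, which depends on the $\mathbf{W}$'s, does not interfere with the divergence of $\sum_{j\in\mathcal{N}}T_{s|j}$ — and it is handled cleanly by the independence of the $T$'s from $\mathcal{G}_s$; the recurrence input itself is entirely classical. Note that, consistently with the rest of the paper, this argument bounds only the single-path sums from below and says nothing about whether the infimum over all paths, $\zeta$, is finite.
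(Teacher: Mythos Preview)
Your argument is correct and follows essentially the same route as the paper: both reduce to Chung--Fuchs recurrence of the additive walk $\ln|\mathbf{W}_{s|n}|$, the paper concluding via $0$-neighborhood recurrence and you via $\liminf_n S_n=-\infty$, which are equivalent here. You also supply a complete proof of the ``in particular'' clause (divergence of the single-path sum via independence of the $T$'s from $\mathcal{G}_s$), which the paper leaves implicit.
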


\begin{proof}
By the  Chung-Fuchs theorem it follows that $\ln|\mathbf{W}_{s|n}|$ is 0-neighborhood recurrent. That is, given
$\epsilon > 0$, 
\begin{equation*}
P(|\ln |\mathbf{W}_{s|n}| < \epsilon\  i.o.) = P(e^{-\epsilon} < |\mathbf{W}_{s|n}| < e^{\epsilon}\  i.o.) = 1.
\end{equation*}
The assertion follows by taking $\epsilon = \ln(1+\delta).$
\end{proof}

\begin{cor} 
\begin{equation*}
\mathbb{E} \frac{a^2R^2}{ a^2R^2+ \theta} = \frac{2}{\pi}\arctan(\frac{a}{\sqrt{\theta}}), \ \ \theta > 0, a\in\mathbb{R}.
\end{equation*}
\end{cor}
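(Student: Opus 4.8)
The plan is to rewrite the expectation as a one–dimensional integral against the dilogarithmic density and then evaluate it by differentiating under the integral sign in an auxiliary parameter. First I would record, from Proposition~\ref{dilogbrw}, that $R$ has density $\mathcal D(r)=\tfrac{2}{\pi^{2}}\tfrac1r\ln\bigl|\tfrac{1+r}{1-r}\bigr|$ on $(0,\infty)$. Since the left-hand side depends on $a$ only through $a^{2}$, we may take $a>0$ (the stated identity is to be read with $|a|$); setting $b=\sqrt\theta/a>0$,
\[
\mathbb E\,\frac{a^{2}R^{2}}{a^{2}R^{2}+\theta}
=\frac{2}{\pi^{2}}\int_{0}^{\infty}\frac{a^{2}r^{2}}{a^{2}r^{2}+\theta}\,\frac{1}{r}\ln\Bigl|\frac{1+r}{1-r}\Bigr|\,dr
=\frac{2}{\pi^{2}}\,F(b),\qquad
F(b):=\int_{0}^{\infty}\frac{r}{r^{2}+b^{2}}\ln\Bigl|\frac{1+r}{1-r}\Bigr|\,dr,
\]
which is an absolutely convergent improper integral (only a logarithmic singularity at $r=1$, with $O(r^{-2})$ decay at infinity). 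So it suffices to prove $F(b)=\pi\arctan(1/b)$.

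Next I would differentiate under the integral sign: on compact subintervals of $(0,\infty)$ the $b$-derivative of the integrand is dominated by a fixed integrable function, so $F'(b)=-b\int_{0}^{\infty}\tfrac{2r}{(r^{2}+b^{2})^{2}}\ln\bigl|\tfrac{1+r}{1-r}\bigr|\,dr$. Integrating this by parts with $u=\ln\bigl|\tfrac{1+r}{1-r}\bigr|$ and $v=-1/(r^{2}+b^{2})$, the boundary contributions at $0$ and $\infty$ vanish, and the two contributions flanking $r=1$ cancel in the principal-value limit because $\ln\bigl|\tfrac{1+r}{1-r}\bigr|$ blows up symmetrically there while $v$ is continuous. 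Using the partial fraction $\tfrac{1}{(r^{2}+b^{2})(1-r^{2})}=\tfrac{1}{1+b^{2}}\bigl(\tfrac{1}{r^{2}+b^{2}}+\tfrac{1}{1-r^{2}}\bigr)$ together with $\int_{0}^{\infty}\tfrac{dr}{r^{2}+b^{2}}=\tfrac{\pi}{2b}$ and $\mathrm{PV}\int_{0}^{\infty}\tfrac{dr}{1-r^{2}}=0$, this gives
\[
F'(b)=-b\cdot\mathrm{PV}\!\int_{0}^{\infty}\frac{2\,dr}{(r^{2}+b^{2})(1-r^{2})}
=-\frac{2b}{1+b^{2}}\cdot\frac{\pi}{2b}=-\frac{\pi}{1+b^{2}}.
\]

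Integrating, $F(b)=-\pi\arctan b+C$, and letting $b\to\infty$ (a routine splitting of the integral at $r=\sqrt b$ shows $F(b)\to0$) pins down $C=\pi^{2}/2$, so $F(b)=\tfrac{\pi^{2}}{2}-\pi\arctan b=\pi\arctan(1/b)$. Substituting back, $\mathbb E\,\frac{a^{2}R^{2}}{a^{2}R^{2}+\theta}=\frac{2}{\pi^{2}}\cdot\pi\arctan(1/b)=\frac{2}{\pi}\arctan(a/\sqrt\theta)$, as claimed. (One could alternatively pin down $C$ by sending $b\to0^{+}$ and using the classical values $\int_{0}^{1}\tfrac{\ln(1+r)}{r}\,dr=\tfrac{\pi^{2}}{12}$, $\int_{0}^{1}\tfrac{-\ln(1-r)}{r}\,dr=\tfrac{\pi^{2}}{6}$.)

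The one genuinely delicate point is the bookkeeping at $r=1$: confirming that $F$ and $F'$ are legitimate improper integrals and that the principal-value cancellation in the integration by parts is valid; everything else is routine calculus. As an alternative route one could start instead from the three-dimensional representation $\mathbb E\,\frac{a^{2}R^{2}}{a^{2}R^{2}+\theta}=\pi^{-3}\int_{\mathbb R^{3}}\bigl((|\mathbf v|^{2}+b^{2})\,|\mathbf e-\mathbf v|^{2}\bigr)^{-1}\,d\mathbf v$ implicit in the proof of Proposition~\ref{dilogbrw} ($\mathbf e$ any unit vector) and evaluate the resulting convolution of Yukawa-type potentials, but the one-dimensional argument above is more self-contained.
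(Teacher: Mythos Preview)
Your proof is correct and follows essentially the same route as the paper's: define the one-parameter function $g(x)=\mathbb{E}\,\frac{R^2}{R^2+x^2}$, differentiate under the integral, integrate by parts across the logarithmic singularity at $r=1$, and obtain $g'(x)=-\tfrac{2}{\pi}\tfrac{1}{1+x^2}$. The only difference is in fixing the constant of integration: the paper computes $g(1)=\tfrac12$ directly via the $r\mapsto 1/r$ symmetry of the dilogarithmic density, whereas you send $b\to\infty$ (or $b\to 0^+$) --- both are equally valid and the rest of the argument is identical.
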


\begin{proof}
Define, 
$g(x) = \mathbb{E}\frac{R^2}{R^2 + x^2}.$
Justify differentiation under the integral to get, with $c=2/\pi^2,$
\begin{eqnarray}
\nonumber
g'(x) &=& c x \int_0^\infty  \frac{ -2 r}{(r^2+x^2)^2} \ln|\frac{1+r}{1-r}| dr \\
\nonumber
&=& c x \int_0^\infty \frac{d}{dr}\left((r^2+x^2)^{-1}\right) \ln|\frac{1+r}{1-r}| dr \\
\nonumber
&=& cx \lim_{\epsilon \to 0^+} \left[\int_0^{1-\epsilon} + \lim_{M\to \infty} \int_{1+\epsilon}^M\right]
\frac{d}{dr}\left((r^2+x^2)^{-1}\right) \ln|\frac{1+r}{1-r}| dr \\
\nonumber
&=&
cx \lim_{\epsilon\to 0^+} \left(\frac{1}{(1-\epsilon)^2+x^2} \ln(\frac{2-\epsilon}{\epsilon})
- \frac{1}{(1+\epsilon)^2 + x^2} \ln(\frac{2+\epsilon}{\epsilon})\right) \\
\nonumber
&& 
\!\!\!\!\!\!
\!\!\!\!\!\!
\!\!\!\!\!\!
-cx\left(
\lim_{\epsilon \to 0^+} 
\int_0^{1-\epsilon} \frac{1}{r^2+x^2} \left(\frac{1}{1+r} + \frac{1}{1-r}\right) dr
+ \lim_{\epsilon \to 0^+} \lim_{M\to \infty} \int_{1+\epsilon}^M
\frac{1}{r^2+x^2} \left(\frac{1}{1+r} - \frac{1}{r-1}\right) dr  \right) \\
\nonumber
&=&
-c \frac{2}{1+x^2}
\lim_{\epsilon \to 0^+} 
\left(\arctan(r/x) + (x/2) (\ln(\frac{1+r}{1-r}) \right) \Big{|}_{r=0}^{r=1-\epsilon} \\
\nonumber
&&
\nonumber
-c \frac{2}{1+x^2}
\lim_{\epsilon \to 0^+} \lim_{M\to \infty}
\left(\arctan(r/x) + (x/2) (\ln(\frac{1+r}{r-1}) \right) \Big{|}_{r=1+\epsilon}^M \\
&=&
\label{eq:gprime}
-c \frac{2}{1+x^2} \frac{\pi}{2} = 
-\frac{2}{\pi}\frac{1}{1+x^2}.
\end{eqnarray}
Note that $g(1) = 1/2.$  Indeed, one has
$$
\int_0^1 \frac{1}{1+r^2} \ln\Big{|}\frac{r+1}{r-1}\Big{|} \frac{dr}{r} 
=
\int_1^\infty \frac{r^2}{1+r^2} \ln\Big{|}\frac{r+1}{r-1}\Big{|} \frac{dr}{r} 
$$
Thus, with $c=2/\pi^2,$
\begin{eqnarray}
\nonumber
g(1) &=&
c \int_0^1 \frac{1}{1+r^2} \ln\Big{|}\frac{r+1}{r-1}\Big{|} \frac{dr}{r} 
+
c \int_1^\infty \frac{1}{1+r^2} \ln\Big{|}\frac{r+1}{r-1}\Big{|} \frac{dr}{r}  \\
&=&
\nonumber
c \int_1^\infty \frac{r^2}{1+r^2} \ln\Big{|}\frac{r+1}{r-1}\Big{|} \frac{dr}{r}  
+
c \int_1^\infty \frac{1}{1+r^2} \ln\Big{|}\frac{r+1}{r-1}\Big{|} \frac{dr}{r}  \\
&=&
\label{eq:g(1)}
c
\int_1^\infty \ln\Big{|}\frac{r+1}{r-1}\Big{|} \frac{dr}{r} 
=
1/2.
\end{eqnarray}
Then, from \eqref{eq:gprime} and \eqref{eq:g(1)} one
has
$$
g(x) = \frac{2}{\pi}\left(\frac{\pi}{2} - \arctan{x}\right)
= \frac{2}{\pi} \arctan(1/x)
$$
and the result follows setting $x=\sqrt{\theta}/a.$
\end{proof}

 \begin{cor}
\begin{equation*}
\lim_{n\to\infty}\prod_{j=1}^n\frac{\prod_{i=1}^jR_i^2}{ \theta+\prod_{i=1}^jR_i^2} \ \text{exists}, \quad \theta > 0.
\end{equation*}
\end{cor}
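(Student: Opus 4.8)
The plan is to reduce the statement to the elementary fact that a bounded monotone sequence converges. Write $P_j = \prod_{i=1}^j R_i^2$, so that $P_j \in (0,\infty)$ almost surely for every $j$, and set
\[
\Pi_n = \prod_{j=1}^n \frac{P_j}{\theta + P_j}.
\]
Since $\theta > 0$ and $P_j > 0$, each factor $\frac{P_j}{\theta+P_j}$ lies strictly between $0$ and $1$; hence $0 < \Pi_{n+1} \le \Pi_n$ for all $n$, so along almost every realization of the sequence $\{R_i\}$ the partial products $\{\Pi_n\}$ form a non-increasing sequence of positive reals. Such a sequence converges, so $\lim_{n\to\infty}\Pi_n$ exists (in $[0,1]$), which is exactly the assertion.

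For completeness I would also identify the limit, since that is the point of recording this corollary in the context of the dilogarithmic walk. Recall $\prod_{i=1}^j R_i = |\mathbf{W}_{s|j}|/|\mathbf{W}_{s|0}|$, so $P_j = |\mathbf{W}_{s|j}|^2/|\nboldsymbol{\xi}|^2$. By the $1$-neighborhood recurrence of the dilogarithmic random walk established above (itself a consequence of the martingale property of $\ln|\mathbf{W}_{s|n}|$ together with the Chung--Fuchs theorem), along almost every path $|\mathbf{W}_{s|j}|$ returns to a fixed neighborhood of $|\nboldsymbol{\xi}|$ infinitely often; equivalently $P_j$ returns to a neighborhood of $1$ infinitely often. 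Consequently $\frac{P_j}{\theta+P_j}$ is bounded away from $1$ along a subsequence, and therefore $\sum_j\bigl(1-\frac{P_j}{\theta+P_j}\bigr) = \sum_j \frac{\theta}{\theta+P_j} = \infty$ a.s., which forces $\Pi_n\to 0$ almost surely.

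The bare existence claim presents no obstacle — it is pure monotonicity and requires nothing about the distribution of $R_i$ beyond positivity. The only mildly delicate step is the second paragraph, where pinning the limit down to $0$ relies on the recurrence of the additive walk $\sum \ln R_i$, i.e., on $\mathbb{E}\ln R_i = 0$ and $\mathbb{E}(\ln R_i)^2 < \infty$ from the preceding proposition; no further integrability (in particular none of $R_i$ itself, whose mean is infinite) is needed.
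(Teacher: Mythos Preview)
Your first paragraph is correct and is essentially the paper's argument: the paper's entire proof reads ``the limit exists by virtue of being a positive super-martingale.'' The supermartingale property here is nothing more than the pathwise monotonicity you identify---each factor lies in $(0,1)$---so your direct appeal to monotone convergence of bounded real sequences is the same observation stripped of probabilistic language, and arguably the cleaner phrasing since no martingale convergence theorem is actually needed.

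Your second paragraph goes beyond the paper, which stops at existence and does not identify the limit. The argument there is sound: Chung--Fuchs recurrence of the additive walk $\sum_j\ln R_j$ forces $P_j$ into a fixed neighborhood of $1$ infinitely often, so $\theta/(\theta+P_j)$ is bounded below by a positive constant along a subsequence, the series $\sum_j \theta/(\theta+P_j)$ diverges a.s., and the infinite product collapses to $0$. This is a genuine sharpening of the corollary as stated.
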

 \begin{proof}
 The limit exists by virtue of being a positive super-martingale.
 \end{proof}  
 
\section*{Acknowledgments}   This work was partially supported by
grants DMS-1408947, DMS-1408939 and DMS-1211413
from the National Science Foundation.


\begin{bibdiv}
\begin{biblist}

\bib{KAPN1972}{book}{
  author={Athreya, K. B.},
  author={Ney, P. E.},
  title={Branching processes},
  note={Reprint of the 1972 original [Springer, New York; MR0373040]},
  publisher={Dover Publications, Inc., Mineola, NY},
  date={2004},
  pages={xii+287},
}

\bib{RBEW2009}{book}{
  author={Bhattacharya, R. N.},
  author={Waymire, E. C.},
  title={Stochastic Processes with Applications},
  series={Classics in Applied Mathematics},
  publisher={SIAM, Philadelphia},
  date={2009},
  pages={xvii+673},
}

\bib{RBEtAl2003}{article}{
  author={Bhattacharya, R. N.},
  author={Chen, L.},
  author={Dobson, S.},
  author={Guenther, R. B.},
  author={Orum, C.},
  author={Ossiander, M.},
  author={Thomann, Enrique},
  author={Waymire, E. C.},
  title={Majorizing kernels and stochastic cascades with applications to incompressible Navier-Stokes equations},
  journal={Trans. Amer. Math. Soc.},
  volume={355},
  date={2003},
  number={12},
  pages={5003--5040},
}

\bib{JB1977}{article}{
  author={Biggins, J. D.},
  title={Chernoff's theorem in the branching random walk},
  journal={J. Appl. Probability},
  volume={14},
  date={1977},
  number={3},
  pages={630--636},
}

\bib{JB1997}{article}{
  author={Biggins, J. D.},
  title={How fast does a general branching random walk spread?},
  conference={ title={Classical and modern branching processes}, address={Minneapolis, MN}, date={1994}, },
  book={ series={IMA Vol. Math. Appl.}, volume={84}, publisher={Springer}, place={New York}, },
  date={1997},
  pages={19--39},
}

\bib{JB2010}{article}{
  author={Biggins, J. D.},
  title={Branching out},
  conference={ title={Probability and mathematical genetics}, },
  book={ series={London Math. Soc. Lecture Note Ser.}, volume={378}, publisher={Cambridge Univ. Press}, place={Cambridge}, },
  date={2010},
  pages={113--134},
}

\bib{EBBD1999}{article}{
  author={Brunet, E.},
  author={Derrida, B.},
  title={Microscopic models of traveling wave equations},
  journal={Computer Physics Communications},
  year={1999},
  pages={121--122},
}

\bib{LC1982}{article}{
  author={Caffarelli, L.},
  author={Kohn, R.},
  author={Nirenberg, L.},
  title={Partial regularity of suitable weak solutions of the Navier-Stokes equations},
  journal={Comm. Pure Appl. Math.},
  volume={35},
  date={1982},
  number={6},
  pages={771--831},
}

\bib{MCFP1996}{article}{
  author={Cannone, M.},
  author={Planchon, F.},
  title={Self-similar solutions for Navier-Stokes equations in ${\bf R}^3$},
  journal={Comm. Partial Differential Equations},
  volume={21},
  date={1996},
  number={1-2},
  pages={179--193},
}

\bib{MCGK2004}{article}{
  author={Cannone, M.},
  author={Karch, G.},
  title={Smooth or singular solutions to the Navier-Stokes system?},
  journal={J. Differential Equations},
  volume={197},
  date={2004},
  number={2},
  pages={247--274},
}

\bib{AC2008}{article}{
  author={Cheskidov, A.},
  title={Blow-up in finite time for the dyadic model of the Navier-Stokes equations},
  journal={Trans. Amer. Math. Soc.},
  volume={360},
  date={2008},
  number={10},
  pages={5101--5120},
}

\bib{LEetAl2003}{article}{
  author={Escauriaza, L.},
  author={Seregin, G.},
  author={{\v {S}}ver{\'a}k, V.},
  title={$L_{3,\infty }$-solutions of Navier-Stokes equations and backward uniqueness},
  language={Russian, with Russian summary},
  journal={Uspekhi Mat. Nauk},
  volume={58},
  date={2003},
  number={2(350)},
  pages={3--44},
  translation={ journal={Russian Math. Surveys}, volume={58}, date={2003}, number={2}, pages={211--250}, issn={0036-0279}, },
}

\bib{HFSW1968}{article}{
  author={Fujita, H.},
  author={Watanabe, S.},
  title={On the uniqueness and non-uniqueness of solutions of initial value problems for some quasi-linear parabolic equations},
  journal={Comm. Pure Appl. Math.},
  volume={21},
  date={1968},
  pages={631--652},
}

\bib{YGTM1989}{article}{
  author={Giga, Y.},
  author={Miyakawa, T.},
  title={Navier-Stokes flow in $\mathbb {R}^3$ with measures as initial vorticity and Morrey spaces},
  journal={Comm. Partial Differential Equations},
  volume={14},
  date={1989},
  number={5},
  pages={577--618},
}

\bib{ZG2006}{article}{
  author={Gruji{\'c}, Z.},
  title={Regularity of forward-in-time self-similar solutions to the 3D Navier-Stokes equations},
  journal={Discrete Contin. Dyn. Syst.},
  volume={14},
  date={2006},
  number={4},
  pages={837--843},
}

\bib{HJVS2014}{article}{
  author={Jia, H.},
  author={{\v {S}}ver{\'a}k, V.},
  title={Local-in-space estimates near initial time for weak solutions of the Navier-Stokes equations and forward self-similar solutions},
  journal={Invent. Math.},
  volume={196},
  date={2014},
  number={1},
  pages={233--265},
}

\bib{HJVS2013}{article}{
  author={Jia, H.},
  author={{\v {S}}ver{\'a}k, V.},
  title={Are the incompressible 3d Navier-Stokes equations locally ill-posed in the natural energy space?},
  date={Jun. 10, 2013},
  eprint={arXiv:1306.2136},
}

\bib{NKNP2005}{article}{
  author={Katz, N. H.},
  author={Pavlovi{\'c}, N.},
  title={Finite time blow-up for a dyadic model of the Euler equations},
  journal={Trans. Amer. Math. Soc.},
  volume={357},
  date={2005},
  number={2},
  pages={695--708 (electronic)},
}

\bib{AK1995}{article}{
  author={Kirillov, A. N.},
  title={Dilogarithm identities},
  note={Quantum field theory, integrable models and beyond (Kyoto, 1994)},
  journal={Progr. Theoret. Phys. Suppl.},
  number={118},
  date={1995},
  pages={61--142},
}

\bib{HKDT2001}{article}{
  author={Koch, H.},
  author={Tataru, D.},
  title={Well-posedness for the Navier-Stokes equations},
  journal={Adv. Math.},
  volume={157},
  date={2001},
  number={1},
  pages={22--35},
}

\bib{YLAS1997}{article}{
  author={Le Jan, Y.},
  author={Sznitman, A. S.},
  title={Stochastic cascades and $3$-dimensional Navier-Stokes equations},
  journal={Probab. Theory Related Fields},
  volume={109},
  date={1997},
  number={3},
  pages={343--366},
}

\bib{ML1997}{article}{
  author={Lee, M. H.},
  title={Polylogarithms and Riemann's $\zeta $ function},
  journal={Phys. Rev. E (3)},
  volume={56},
  date={1997},
  number={4},
  pages={3909--3912},
}

\bib{ML1995}{article}{
  author={Lee, M. H.},
  title={Polylogarithmic analysis of chemical potential and fluctuations in a $D$-dimensional free Fermi gas at low temperatures},
  journal={J. Math. Phys.},
  volume={36},
  date={1995},
  number={3},
  pages={1217--1231},
}

\bib{PL2002}{book}{
  author={Lemari{\'e}-Rieusset, P. G.},
  title={Recent developments in the Navier-Stokes problem},
  series={Chapman \& Hall/CRC Research Notes in Mathematics},
  volume={431},
  publisher={Chapman \& Hall/CRC, Boca Raton, FL},
  date={2002},
  pages={xiv+395},
}

\bib{JL1934}{article}{
  author={Leray, J.},
  title={Sur le mouvement d'un liquide visqueux emplissant l'espace},
  language={French},
  journal={Acta Math.},
  volume={63},
  date={1934},
  number={1},
  pages={193--248},
  issn={0001-5962},
}

\bib{YM1997}{article}{
  author={Meyer, Y.},
  title={Wavelets, paraproducts, and Navier-Stokes equations},
  conference={ title={Current developments in mathematics, 1996 (Cambridge, MA)}, },
  book={ publisher={Int. Press, Boston, MA}, },
  date={1997},
  pages={105--212},
}

\bib{SM2001}{article}{
  author={Montgomery-Smith, S.},
  title={Finite time blow up for a Navier-Stokes like equation},
  journal={Proc. Amer. Math. Soc.},
  volume={129},
  date={2001},
  number={10},
  pages={3025--3029},
}

\bib{JNetAl1996}{article}{
  author={Ne{\v {c}}as, J.},
  author={R{\r {u}}{\v {z}}i{\v {c}}ka, M.},
  author={{\v {S}}ver{\'a}k, V.},
  title={On Leray's self-similar solutions of the Navier-Stokes equations},
  journal={Acta Math.},
  volume={176},
  date={1996},
  number={2},
  pages={283--294},
}

\bib{TS1969}{article}{
  author={Savits, T. H.},
  title={The explosion problem for branching Markov process},
  journal={Osaka J. Math.},
  volume={6},
  date={1969},
  pages={375--395},
}

\bib{TT2014}{article}{
  author={Tao, T.},
  title={Finite time blowup for an averaged three-dimensional Navier-Stokes equation},
  date={Feb. 6, 2014},
  eprint={arXiv:1402.0290},
}

\bib{RT2001}{book}{
  author={Temam, R.},
  title={Navier-Stokes equations},
  note={Theory and numerical analysis; Reprint of the 1984 edition},
  publisher={AMS Chelsea Publishing, Providence, RI},
  date={2001},
  pages={xiv+408},
}

\bib{TPT1998}{article}{
  author={Tsai, T.-P.},
  title={On Leray's self-similar solutions of the Navier-Stokes equations satisfying local energy estimates},
  journal={Arch. Rational Mech. Anal.},
  volume={143},
  date={1998},
  number={1},
  pages={29--51},
}

\end{biblist}
\end{bibdiv}

\end{document}